\theoremstyle{plain}
\newtheorem{thm}{Theorem}[section]
\newtheorem{prop}[thm]{Proposition}
\newtheorem{lem}[thm]{Lemma}
\newtheorem{cor}[thm]{Corollary}
\newtheorem{defn}[thm]{Definition}
\newtheorem{ques}{Question}[section]
\newtheorem{eg}{Example}[section]
\newtheorem*{rem}{Remark}
\theoremstyle{remark}
\newcommand{\gr}{\operatorname{gr}}
\newcommand{\dr}{\operatorname{DR}}
\newcommand{\cd}{\operatorname{codim}}
\newcommand{\supp}{\operatorname{Supp}}
\newcommand{\coh}{\operatorname{coh}}
\newcommand{\cB}{\mathcal{B}}
\newcommand{\cC}{\mathcal{C}}
\newcommand{\cD}{\mathcal{D}}
\newcommand{\cE}{\mathcal{E}}
\newcommand{\cF}{\mathcal{F}}
\newcommand{\cG}{\mathcal{G}}
\newcommand{\cH}{\mathcal{H}}
\newcommand{\cI}{\mathcal{I}}
\newcommand{\cM}{\mathcal{M}}
\newcommand{\cN}{\mathcal{N}}
\newcommand{\cO}{\mathcal{O}}
\newcommand{\cP}{\mathcal{P}}
\newcommand{\cS}{\mathcal{S}}
\newcommand{\cT}{\mathcal{T}}
\newcommand{\CC}{\mathbb{C}}
\newcommand{\DD}{\mathbb{D}}
\newcommand{\NN}{\mathbb{N}}
\newcommand{\PP}{\mathbb{P}}
\newcommand{\QQ}{\mathbb{Q}}
\newcommand{\ZZ}{\mathbb{Z}}
\newcommand{\fp}{\mathfrak{p}}
\newcommand{\fX}{\mathfrak{X}}
\newcommand{\ic}{\operatorname{IC}}
\newcommand{\cIC}{\mathcal{IC}}
\newcommand{\HM}{\operatorname{HM}}
\newcommand{\Perv}{\operatorname{Perv}}
\newcommand{\pervD}{{^p}D}
\newcommand{\pervH}{{^p}\cH}
\newcommand{\mervD}{{^m}D}
\newcommand{\Dbcoh}{D^b_{coh}}
\newcommand{\pnum}{{^\fp}Ih}
\newcommand{\Dtcoh}[1]{D_{\mathit{coh}}^{#1}}
\newcommand{\mDtcoh}[2]{ {^{#1}} \Dtcoh{#2}}
\newcommand{\mybox}{%
    \collectbox{%
        \setlength{\fboxsep}{1pt}%
        \fbox{\BOXCONTENT}%
    }%
}
\begin{document}
\setcounter{tocdepth}{1}

\title{Perverse-Hodge complexes for Lagrangian fibrations and symplectic resolutions}
\author{Zhengze Xin}
\email{zhengze.xin@stonybrook.edu}
\address{DEPARTMENT OF MATHEMATICS, STONYBROOK UNIVERSITY, STONYBROOK, NY 11794, USA}
\begin{abstract}
We study perverse-Hodge complexes for Lagrangian fibrations on holomorphic symplectic varieties. We prove the symplectic Hard Lefschetz type theorem and the symmetry of perverse-Hodge complexes when the symplectic variety admits symplectic resolutions, therefore generalize the previous result by Schnell in the smooth case verifying a conjecture by Shen-Yin. Along the way, we study the perverse coherent properties of the intersection complex Hodge modules on symplectic varieties. As an application, we obtain an alternative proof of the numerical "perverse=Hodge" result by Felisetti-Shen-Yin, without using the Beauville-Bogomolov-Fujiki form. We also apply our results to study singular Higgs moduli spaces over reduced curves using results by Mauri-Migliorini on the local structure.
\end{abstract}

\maketitle
\tableofcontents

\section{Introduction}
\label{sec.introduction}
\subsection*{1.1 Holomorphic symplectic varieties} 
We work over complex number $\CC$. The goal of this paper is to study the Hodge theory of Lagrangian fibrations on holomorphic symplectic varieties, which are possibly singular and non-compact. The notion of holomorphic symplectic variety generalizes irreducible holomorphic symplectic manifolds, which are one of the building blocks for compact k\"ahler manifolds with numerically trivial canonical bundles by the Beauville-Bogomolov decomposition. In the sense of Beauville \cite{beauville1999symplectic}, a holomorphic symplectic variety is a normal variety $X$ which admits a non-degenerate closed holomorphic two-form on its regular locus $X_{reg}$ which extends holomorphically on some resolution of singularities of $X$. Classical objects such as Hilbert schemes of points and Nakajima quiver varieties are known to be symplectic. In terms of moduli problems, some important examples of symplectic varieties come from moduli spaces of sheaves such as Beauville-Mukai system or the moduli of Higgs bundles. 

The main objects to study in this article is a family of complexes of coherent sheaves called the \textit{perverse-Hodge complexes}, associated with a Lagrangian of a holomorphic symplectic variety. They are constructed using Saito's theory of mixed Hodge modules \cite{saito1988modules,saito1990mixed}. We will focus on their interaction with \textit{symplectic singularities}.

\subsection*{1.2 Perverse-Hodge complexes}
Let $f:M\rightarrow B$ be a Lagrangian fibration on holomorphic symplectic variety $M$ of dimension $2n$. 

When $M$ is compact and smooth, \textit{perverse-Hodge complexes} are originally defined in \cite{shen2023perverse-Hodgecomplex} by considering Saito's decomposition theorem for $Rf_*\QQ_M[2n]$. When $M$ is possibly singular and non-compact, the natural replacement for $\QQ_M[2n]$ is the intersection cohomology Hodge module $\ic_M$. 

Applying Saito's decomposition theorem \cite[\S 5.3]{saito1988modules} to the Lagrangian fibration $f$, we get the following decomposition
$$Rf_*\ic_M(n)=\bigoplus_{i=-n}^n \cP_i[-i]$$
where $\ic_M$ is the intersection cohomology Hodge module and $(\bullet)$ is the Tate twist. 
We can define the \textit{perverse-Hodge complex} as $G_{i,k}=\gr^F_{-k}\dr(\cP_i)[-i]$. In \cite{shen2023perverse-Hodgecomplex}, Shen and Yin proposed a symmetry between perverse-Hodge complexes. When $M$ is smooth, the conjecture was proven by Schnell \cite{schnell2023hodge} and we have symmetry of perverse-Hodge complexes:
    $$G_{i,k}\cong G_{k,i}.$$
in the derived category $D^b_{coh}(B)$.

We aim to investigate the perverse-Hodge complexes and the symmetry when $M$ is a possibly singular holomorphic symplectic variety. Our main theorems provides a partial answer to this question.
\begin{thm}[Symplectic Hard Lefschetz, Theorem \ref{semiperversesymphardlef}]\label{introsymmetry}
    If the intersection cohomology Hodge module $\ic_M$ is strongly coherent m-perverse (see Definition \ref{weaklyperverse}), i.e. it satisfies the following support condition on its graded de Rham:
    \begin{equation}\label{thm1.1support}
        \cd \supp \cH^j\gr^F_{-k}\dr(\ic_M)[k-2n]\ge 2j+2
    \end{equation}
    for $j\ge 1$, then the action by the reflexive symplectic form $\sigma\in H^0(M,\Omega^{[2]}_M)$ satisfies symplectic Hard Lefschetz theorem for $\ic_M$, i.e
    \begin{equation}
        \sigma^k:\gr^F_k\dr(\ic_M(n))[-k-n]\rightarrow \gr^F_{-k}\dr(\ic_M(n))[k-n].
    \end{equation}
    is an isomorphism.
\end{thm}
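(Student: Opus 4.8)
The plan is to reduce the statement, which is \emph{a priori} global on $M$, to the classical symplectic Hard Lefschetz theorem on the regular locus $M_{reg}$, and then to propagate it across the singular locus $\Sigma:=M\setminus M_{reg}$ (which has codimension $\ge 2$ since $M$ is normal) using the $m$-perverse coherent structure supplied by the hypothesis. Asserting that $\sigma^k$ is an isomorphism is the same as asserting that $\Cone(\sigma^k)$ vanishes in $\Dbcoh(M)$. Over $M_{reg}$ we have $\ic_M|_{M_{reg}}\cong\QQ^H_{M_{reg}}[2n]$ up to Tate twist, so the two complexes in question restrict to the locally free sheaves $\gr^F_k\dr(\ic_M(n))[-k-n]|_{M_{reg}}\cong\Omega^{n-k}_{M_{reg}}$ and $\gr^F_{-k}\dr(\ic_M(n))[k-n]|_{M_{reg}}\cong\Omega^{n+k}_{M_{reg}}$, and the map $\sigma^k$ is $k$-fold exterior multiplication by the non-degenerate holomorphic two-form $\sigma|_{M_{reg}}$. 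By the symplectic Hard Lefschetz theorem for the exterior algebra of a symplectic vector space --- equivalently, by the fact that wedging with a symplectic form is a Lefschetz element, which is pure $\liesl_2$-representation theory --- this map $\Omega^{n-k}_{M_{reg}}\to\Omega^{n+k}_{M_{reg}}$ is an isomorphism of sheaves. Hence $\sigma^k$ is already a quasi-isomorphism over $M_{reg}$, so $\Cone(\sigma^k)$ is supported on $\Sigma$, and it remains to show that this cone is acyclic.

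Next I would assemble the perverse-coherent package. The hypothesis \eqref{thm1.1support} is the support half of $m$-perversity for the graded de Rham of $\ic_M$; the polarization on the pure Hodge module $\ic_M$, together with the compatibility of the de Rham functor with Grothendieck--Serre duality, supplies the dual cosupport half and identifies $\gr^F_k\dr(\ic_M(n))[-k-n]$ and $\gr^F_{-k}\dr(\ic_M(n))[k-n]$ as Grothendieck--Serre dual objects lying in the heart of the $m$-perverse $t$-structure. Moreover $\sigma^k$ is a self-dual morphism between them: over $M_{reg}$ it corresponds to the pairing $\alpha\otimes\beta\mapsto\sigma^k\wedge\alpha\wedge\beta$ on $\Omega^{n-k}_{M_{reg}}$, which is $(-1)^{n-k}$-symmetric, and this symmetry extends over $M$. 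Consequently $\Cone(\sigma^k)$ has nonzero $m$-perverse cohomology only in degrees $-1$ and $0$, equal respectively to the kernel and cokernel of $\sigma^k$ computed in the heart, which are interchanged by Grothendieck--Serre duality and, by the first step, are supported on $\Sigma$. It therefore suffices to prove that $\sigma^k$ is a monomorphism in the $m$-perverse category; its surjectivity then follows by duality.

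To kill the kernel I would invoke the structural fact --- the main input on the perverse coherence of $\ic_M$ on symplectic varieties --- that, under the hypothesis \eqref{thm1.1support}, the object $\gr^F_k\dr(\ic_M(n))[-k-n]$ coincides with the intermediate extension of its restriction $\Omega^{n-k}_{M_{reg}}$; in particular it has no nonzero $m$-perverse coherent subobject supported on $\Sigma$. Granting this, the kernel of $\sigma^k$ is a subobject of an intermediate extension supported on $\Sigma$, hence zero, and then the cokernel vanishes by duality, so $\Cone(\sigma^k)=0$ and $\sigma^k$ is an isomorphism. An alternative, more hands-on route to the same vanishing is to induct on dimension along the stratification of $M$ by symplectic leaves: by a theorem of Kaledin the transverse slice to a stratum $S$ is again a conical symplectic variety $Y$ of strictly smaller dimension, the symplectic form splits off $Y$ in a neighbourhood of $S$, and near the generic point of $S$ the complex $\ic_M$ is, up to a shift and a local system on $S$, the exterior tensor product of $\QQ^H_S$ with $\ic_Y$; so near each such generic point $\sigma^k$ is the tensor product of the classical symplectic Hard Lefschetz on the smooth stratum with the symplectic Hard Lefschetz on $Y$, the latter holding by the inductive hypothesis --- and combined with the $m$-perverse structure this forces the kernel to vanish.

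I expect the intermediate-extension statement of the third paragraph to be the main obstacle: being a self-dual object of the $m$-perverse heart is by itself \emph{not} enough to exclude subobjects supported deep inside $\Sigma$, and it is precisely the ``strongly coherent $m$-perverse'' hypothesis \eqref{thm1.1support} --- a genuine restriction, which fails for general Lagrangian fibrations --- that makes the graded de Rham of $\ic_M$ behave like an intersection complex. A secondary, purely technical point is to pin down the shifts and Tate twists in the Grothendieck--Serre duality identification for graded de Rham complexes accurately enough that the self-duality of $\sigma^k$ is the honest one and the $m$-perverse cohomology of $\Cone(\sigma^k)$ lands in degrees $-1$ and $0$; and since $M$ may be non-compact, the entire argument is local on $M$, which causes no difficulty as all assertions concern objects of $\Dbcoh(M)$.
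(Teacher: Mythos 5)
Your proposal follows essentially the same route as the paper: the isomorphism is first established on $M_{reg}$ by linear algebra for exterior powers of a non-degenerate two-form, and is then extended across the singular locus using the fact that the support condition together with the self-duality of $\gr^F_{\bullet}\dr(\ic_M)$ forces each $\gr^F_{\mp k}\dr(\ic_M(n))[\pm k-n]$ to be the coherent intersection complex $\cIC_M(\Omega^{n\pm k}_{M_{reg}})$ of its restriction to the regular locus --- which is exactly your ``intermediate extension'' claim, and is the content of Proposition \ref{reconstructionProp4.7} and Corollary \ref{reconstructionwhenicsemiperverse}, proved by iterated truncation and Grothendieck duality. The only cosmetic difference is that the paper extends the isomorphism by functoriality of the $\cIC_M(-)$ construction rather than by killing kernel and cokernel in the $m$-perverse heart, which sidesteps the $m$ versus $\hat{m}$ asymmetry your formulation would otherwise have to manage.
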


This provides a way to establish Hard Lefschetz type symmetry on symplectic varieties, by checking the dimension of the support of graded de Rham complexes of $\ic_X$. The advantage of this criteria is that it's an \'etale-local condition. 

The symmetry of perverse-Hodge complexes in the smooth case was established in \cite{schnell2023hodge}. Our second theorem generalizes it to the case when $M$ admits a symplectic resolution. 

\begin{thm}[Perverse-Hodge symmetry, Theorem \ref{sympP=H}]
    If $M$ admist a symplectic resolution, then the support condition in the above theorem holds and thus the symplectic Hard Lefschetz theorem holds. Moreover, we have the symmetry of perverse-Hodge complexes
    \begin{equation}
        G_{i,k}\cong G_{k,i}
    \end{equation}
    in the derived category $D^b_{coh}(B)$.
\end{thm}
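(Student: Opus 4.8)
\emph{Sketch of the argument.} The plan is to separate the statement into two parts. First, one shows that the existence of a symplectic resolution forces the support condition \eqref{thm1.1support} on $\ic_M$; Theorem~\ref{introsymmetry} then immediately yields the symplectic Hard Lefschetz isomorphism. Second, one deduces the symmetry $G_{i,k}\cong G_{k,i}$ in $\Dbcoh(B)$ from that isomorphism by re-running the argument Schnell uses in the smooth case, whose only genuinely geometric ingredient is symplectic Hard Lefschetz. Essentially all of the difficulty lies in the first part, i.e.\ in the perverse-coherent nature of the graded de Rham complexes of $\ic_M$.

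For the first part, fix a symplectic resolution $\pi\colon\widetilde M\to M$; recall that symplectic resolutions are semismall and crepant, so $\pi^{*}\sigma$ is the holomorphic symplectic form of $\widetilde M$ and $\pi^{!}\omega_{M}\cong\omega_{\widetilde M}$. By semismallness $\pi_{*}\QQ^{H}_{\widetilde M}[2n]$ is a single pure Hodge module, and the decomposition theorem exhibits $\ic_M$ as one of its simple summands. Since $\gr^F_{-k}\dr$ is exact and, for a proper morphism, commutes with pushforward together with the Hodge filtration,
$$\gr^F_{-k}\dr(\ic_M)\ \text{ is a direct summand of }\ R\pi_{*}\bigl(\gr^F_{-k}\dr\,\QQ^{H}_{\widetilde M}[2n]\bigr)=R\pi_{*}\bigl(\Omega^{k}_{\widetilde M}[2n-k]\bigr)$$
in $\Dbcoh(M)$. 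One cannot read off \eqref{thm1.1support} directly, since $R\pi_{*}\Omega^{k}_{\widetilde M}$ typically carries too much cohomology in low codimension — already for $T^{*}\PP^{1}\to\CC^{2}/\pm$ — and this surplus must instead be absorbed by the remaining summands of $\pi_{*}\QQ^{H}_{\widetilde M}[2n]$, which are intersection complex Hodge modules, with coefficients in a variation of Hodge structure, supported on the closures of lower symplectic leaves. I would therefore argue by induction on the depth of the symplectic stratification. Condition \eqref{thm1.1support} is \'etale-local, so by Kaledin's local structure theorem one may replace $M$ by a conical symplectic singularity (a transverse slice to a leaf), over which $\pi$ restricts to a symplectic resolution whose central fibre has dimension at most half the codimension of that leaf; the non-$\ic_M$ summands of $\pi_{*}\QQ^{H}_{\widetilde M}[2n]$ are controlled by the inductive hypothesis applied to these lower slices, so that the low-codimension cohomology of $R\pi_{*}\Omega^{k}_{\widetilde M}$ is accounted for by them and $\gr^F_{-k}\dr(\ic_M)$ is left satisfying \eqref{thm1.1support}. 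The top-degree cohomology over each leaf — the one case not already killed by the fibre-dimension bound — is controlled by a Grauert--Riemenschneider-type vanishing, available because $\pi$ is crepant, $\omega_{\widetilde M}\cong\pi^{*}\omega_{M}$. With \eqref{thm1.1support} in hand, Theorem~\ref{introsymmetry} yields the symplectic Hard Lefschetz isomorphism $\sigma^{k}\colon\gr^F_{k}\dr(\ic_M(n))[-k-n]\xrightarrow{\ \sim\ }\gr^F_{-k}\dr(\ic_M(n))[k-n]$.

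For the second part I would replay Schnell's argument. Applying $Rf_{*}$ — which again commutes with $\gr^F\dr$ — to the symplectic Hard Lefschetz isomorphism and using the splitting $Rf_{*}\ic_M(n)=\bigoplus_i\cP_i[-i]$, hence $Rf_{*}\gr^F_{-k}\dr(\ic_M(n))=\bigoplus_{i}G_{i,k}$, one obtains for each $k$ an isomorphism relating $\bigoplus_i G_{i,-k}$ and $\bigoplus_i G_{i,k}$ up to shift — an $\liesl_2$-type symmetry in the Hodge index; the relative Hard Lefschetz theorem for the projective morphism $f$, applied with an $f$-ample class (recall $\ic_M(n)$ is self-dual), supplies a commuting $\liesl_2$-type symmetry $\cP_{-i}\cong\cP_i$ in the perverse index. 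The exchange $i\leftrightarrow k$ then comes from the Lagrangian hypothesis: on the locus where $M$ and $f$ are smooth the symplectic form induces an isomorphism $T_{M/B}\cong f^{*}\Omega^{1}_{B}$, and this is precisely the device by which Shen--Yin and Schnell interchange the two Hard Lefschetz operators, forcing $G_{i,k}\cong G_{k,i}$ in $\Dbcoh(B)$. No smoothness of the total space enters once symplectic Hard Lefschetz is known for $\ic_M$; the only delicate point is the behaviour over the locus where $f$ degenerates or $M$ is singular, handled as in Schnell's treatment of the singular fibres.

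The main obstacle is the perverse-coherent input of the first part: proving \eqref{thm1.1support} for $\ic_M$, a kind of higher-rationality statement for symplectic singularities that admit symplectic resolutions, which requires welding together the direct-summand description, the \'etale-local structure theory, the semismallness fibre bounds, Grauert--Riemenschneider vanishing, and the stratification induction. Everything else — exactness and proper-pushforward compatibility of $\gr^F\dr$, self-duality of $\ic_M(n)$, relative Hard Lefschetz for $f$, and the Lagrangian identity $T_{M/B}\cong f^{*}\Omega^{1}_{B}$ — is either standard or a citation, and the passage to the symmetry is Schnell's argument.
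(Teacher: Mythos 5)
Your first part follows the paper's actual route closely: induction on the symplectic stratification via Kaledin's local product decomposition, the fact that transverse slices inherit symplectic resolutions (Proposition \ref{propslicehassymplecticresolution}), the product formula for graded de Rham complexes, semismallness bounds on fibre dimensions, and a vanishing input at the bottom of the induction. (That input is not Grauert--Riemenschneider in the paper but Kaledin's vanishing $H^p(\fX,\Omega^q_{\fX})=0$ for $p>q$ on the formal completion along the central fibre, combined with the degeneration of the formal Hodge--de Rham spectral sequence, which identifies $H^n(\fX,\Omega^n_{\fX})$ with $H^{2n}$ of the fibre and kills the stalk of $\cH^n\cC_n$; still, the architecture matches.)

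The gap is in the second part. The symplectic Hard Lefschetz produced by the support condition is only the isomorphism $\sigma^k:\gr^F_k\dr(\ic_M(n))[-k-n]\to\gr^F_{-k}\dr(\ic_M(n))[k-n]$; after applying $Rf_*$ it yields an isomorphism of \emph{direct sums} $\bigoplus_i G_{i,-k}\to\bigoplus_i G_{i,k}[2k]$. To run the composition $G_{i,k}\xrightarrow{\ell^{-i}}G_{-i,k-i}[-2i]\xrightarrow{\ \mu\ }G_{-k,i-k}[-2k]\xrightarrow{\ell^{k}}G_{k,i}$ you need the \emph{component-wise} relative symplectic Hard Lefschetz: that the Deligne component $\mu_{i,-k}:G_{i,-k}\to G_{i+k,k}[2k]$ is itself an isomorphism. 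This does not follow formally from the direct-sum statement plus relative Hard Lefschetz for $f$, and your assertion that ``no smoothness of the total space enters once symplectic Hard Lefschetz is known for $\ic_M$'' is exactly where the argument is missing: Schnell's proof of the component-wise statement uses the smooth total space. The paper gets around this by invoking the \emph{global} resolution a second time: Schnell's relative symplectic Hard Lefschetz applies to $\tilde M$ and $g=f\circ\pi$, giving isomorphisms $\tilde\sigma_1^k:\tilde G_{i,-k}\to\tilde G_{i+k,k}[2k]$, and Lemma \ref{ictoN} (the action of $\pi_*\pi^*\sigma$ is diagonal with respect to the decomposition $R\pi_*\QQ_{\tilde M}[2n]\cong\ic_M\oplus N$, following de Cataldo--Migliorini) shows that $\tilde\sigma_1^k$ restricts to $\mu_{i,-k}$ on the summand $G_{i,-k}$. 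This is also why the symmetry requires a global symplectic resolution while symplectic Hard Lefschetz itself only needs \'etale-local ones. You should either supply this reduction to the resolution or give an independent proof that $\mu_{i,-k}$ is an isomorphism directly on the singular $M$.
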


In the compact case, the cohomology groups are finite dimensional. As shown in \cite{shen2022topology,shen2023perverse-Hodgecomplex}, the symmetry of perverse-Hodge complexes is in fact a sheaf-theoretic refinement of a numerical symmetry result concerning perverse numbers and Hodge numbers of $M$. In the singular setting we can define perverse Hodge numbers on the intersection cohomology $IH^*(M,\QQ)$, and we can reprove the following numerical result already observed by Felisetti-Shen-Yin \cite{felisetti2022intersection}.

\begin{thm}[Numerical "Perverse=Hodge", Corollary \ref{numP=H}]\label{intronumP=H}
    Let $M$ be a irreducible holomorphic symplectic variety which admits a symplectic resolution. We have
    \begin{equation}
        \pnum^{i,j}(f)=Ih^{i,j}(M).
    \end{equation}
\end{thm}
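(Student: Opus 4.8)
The plan is to deduce the numerical identity from the sheaf-theoretic symmetry $G_{i,k}\cong G_{k,i}$ of Theorem \ref{sympP=H} by a purely formal dimension count over $B$; in particular the Beauville--Bogomolov--Fujiki form never enters. Since $M$ is compact, $IH^{*}(M,\QQ)$ carries a pure Hodge structure and the summands $\cP_i$ in Saito's decomposition $Rf_{*}\ic_M(n)=\bigoplus_{i=-n}^{n}\cP_i[-i]$ are pure Hodge modules. Writing $\mathrm{rat}(\cP_i)$ for the underlying perverse sheaf, taking hypercohomology on $B$ yields the (split) perverse filtration $P_{\bullet}$ on $IH^{*}(M)$ with $\gr^{P}_i IH^{i+j}(M)(n)=\mathbb{H}^{j}(B,\mathrm{rat}(\cP_i))$, so that by definition $\pnum^{i,j}(f)=\dim\mathbb{H}^{j}(B,\mathrm{rat}(\cP_i))$. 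On the other hand, because the decomposition theorem is a splitting of pure Hodge modules, the Hodge filtration $F$ on $IH^{*}(M,\CC)$ is compatible with the $\cP_i$-decomposition, and hence
\[
Ih^{i,j}(M)=\dim\gr_F^{i}IH^{i+j}(M)=\sum_{a}\dim\gr_F^{i}\mathbb{H}^{i+j-a}\bigl(B,\mathrm{rat}(\cP_a)\bigr).
\]

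Next I would invoke Saito's strictness: for each $\cP_a$ the Hodge-to-de Rham spectral sequence degenerates and the Hodge filtration on hypercohomology is computed by the filtered de Rham complex, so $\dim\gr_F^{k}\mathbb{H}^{m}(B,\mathrm{rat}(\cP_a))=\dim\mathbb{H}^{m}\bigl(B,\gr^{F}_{-k}\dr(\cP_a)\bigr)$. By the definition $G_{a,k}=\gr^{F}_{-k}\dr(\cP_a)[-a]$ this reads $\dim\gr_F^{k}\mathbb{H}^{m}(B,\mathrm{rat}(\cP_a))=\dim\mathbb{H}^{m+a}(B,G_{a,k})$, and summing over the Hodge degree $k$ gives $\dim\mathbb{H}^{m}(B,\mathrm{rat}(\cP_a))=\sum_{k}\dim\mathbb{H}^{m+a}(B,G_{a,k})$. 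Substituting into the two formulas of the previous paragraph we obtain
\[
\pnum^{i,j}(f)=\dim\mathbb{H}^{j}(B,\mathrm{rat}(\cP_i))=\sum_{k}\dim\mathbb{H}^{i+j}(B,G_{i,k}),\qquad Ih^{i,j}(M)=\sum_{a}\dim\mathbb{H}^{i+j}(B,G_{a,i}).
\]
Finally, the symmetry $G_{i,k}\cong G_{k,i}$ in $D^{b}_{coh}(B)$ forces $\dim\mathbb{H}^{i+j}(B,G_{i,k})=\dim\mathbb{H}^{i+j}(B,G_{k,i})$ term by term, so
\[
\pnum^{i,j}(f)=\sum_{k}\dim\mathbb{H}^{i+j}(B,G_{i,k})=\sum_{k}\dim\mathbb{H}^{i+j}(B,G_{k,i})=Ih^{i,j}(M),
\]
as claimed; all the groups involved are finite dimensional since $B$ is compact.

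The only genuine input is the symmetry of the $G_{i,k}$ furnished by Theorem \ref{sympP=H} --- equivalently, the symplectic Hard Lefschetz theorem for $\ic_M$, which in our situation follows from the \'etale-local support condition \eqref{thm1.1support} --- together with the formal properties of Hodge modules; everything else is bookkeeping. Accordingly the point to be careful about is precisely that bookkeeping: reconciling Saito's increasing Hodge filtration $F_{\bullet}$ on the underlying $D$-module with the decreasing filtration $F^{\bullet}$ on $IH^{*}(M)$, keeping track of the shifts $[-i]$ and the Tate twist $(n)$ in the decomposition, and checking that $\pnum^{i,j}(f)$ and $Ih^{i,j}(M)$ are normalized so that the two sums above are literally indexed in the same way (I have used that for a pure Hodge structure of weight $i+j$ one has $\dim\gr_F^{i}=Ih^{i,j}$). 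With these conventions fixed the comparison is immediate, and it reproves the Felisetti--Shen--Yin identity \cite{felisetti2022intersection} without any appeal to the Beauville--Bogomolov--Fujiki form.
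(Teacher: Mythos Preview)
Your proposal is correct and follows essentially the same approach as the paper's proof: both compute $\sum_k\dim H^{i+j}(B,G_{i,k})$ as the perverse number and $\sum_k\dim H^{i+j}(B,G_{k,i})$ as the Hodge number, then invoke the symmetry $G_{i,k}\cong G_{k,i}$ of Theorem~\ref{sympP=H}. Your write-up is more explicit about the justification---spelling out Saito's strictness to identify $\gr_F^k\mathbb{H}^m(B,\mathrm{rat}(\cP_a))$ with $\mathbb{H}^m(B,\gr^F_{-k}\dr(\cP_a))$---whereas the paper states these identifications more tersely, but the argument is the same.
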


In \cite{felisetti2022intersection}, the identity between perverse numbers and Hodge numbers were established using the hyperk\"ahler metric and Looijenga-Lunts-Verbitsky (LLV) algebra, which relies essentially on the existence of the Beauville-Bogomolov-Fujiki (BBF) form. The symmetry for perverse-Hodge complexes allows one to deduce and refine the numerical result above without using the BBF form, which was considered to be one of the reason for introducing perverse-Hodge complexes as a categorification.

\subsection*{1.3 The support condition and the symplectic resolution}
The perverse coherent t-structures, in the sense of Deligne, Arinkin-Bezrukavnikov \cite{arinkin2009perverse} and Kashiwara \cite{kashiwara302086t}, is defined on the derived category of coherent sheaves as a commutative algebraic counter-part of the perverse t-structure on constructible sheaves. The notion is useful for our purpose in the following sense: being strongly m-perverse coherent as in Thm \ref{introsymmetry} allows us to extend the action of the symplectic forms in a unique way.

In the singular setting, the graded de Rham complexes $\gr^F_{-*}\dr(\ic_M)$ is the natural replacement for $\Omega_M^*$, it coincides with $\Omega_{M_{reg}}^*$ when restrict to the regular locus $M_{reg}$ of $M$. On the regular locus, the non-degenerate symplectic form $\sigma_{reg}$ acts on $\Omega_M^*$. 

we can verify the strongly m-perverse coherent condition when $M$ admits symplectic resolutions and thus extend the action of symplectic forms on $\Omega_M^*$ to the graded de Rham complexes.

\begin{thm}[Theorem \ref{5.4}]\label{1.4}
    Let $M$ be a holomorphic symplectic variety of dimension $2n$ admitting \'etale-locally symplectic resolutions. Then the graded de Rham complexes of $\ic_X$ are m-perverse coherent sheaves. Moreover, $\ic_X$ is strongly coherent m-perverse, satisfying
    $$
        \cd \supp \cH^j(\gr^F_{-k}\dr(\ic_X)[k-2n]) \ge 2j+2
    $$
    for $j\ge 1$. And therefore $M$ satisfies symplectic Hard Lefschetz symmetry Theorem \ref{introsymmetry}.
\end{thm}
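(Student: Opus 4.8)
The plan is to work étale-locally and reduce to the case where $M$ admits a genuine symplectic resolution $\pi\colon \widetilde M\to M$. Since the statement concerns the support of cohomology sheaves of graded de Rham complexes, and these are compatible with restriction to étale opens (both $\ic_M$ and the de Rham functor localize), it suffices to establish the support bound under the assumption that a global symplectic resolution exists. So I would fix such a $\pi$.

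The key input is that a symplectic resolution is semismall (indeed, this is a theorem of Kaledin and others: a symplectic resolution is semismall, and in fact the fibers satisfy $2\dim \pi^{-1}(x)\le \operatorname{codim}_M\overline{\{x\}}$ stratumwise), so the decomposition theorem applied to $R\pi_*\QQ_{\widetilde M}[2n]$ (equivalently $\pi_*$ of the constant Hodge module on $\widetilde M$, which is pure since $\widetilde M$ is smooth and $\pi$ is projective) has $\ic_M$ as its top perverse cohomology and all other summands $\ic_{Z}(L)[\ast]$ supported on strictly smaller strata, with controlled shifts coming from semismallness. First I would write out this decomposition at the level of Hodge modules, $\pi_* \QQ^H_{\widetilde M}[2n] \cong \ic_M \oplus (\text{summands on deeper strata})$, noting there are no shifts on the $\ic_M$ summand precisely by semismallness. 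Then I would apply $\gr^F_{-k}\dr(-)$ to both sides. On the left, $\gr^F_{-k}\dr(\QQ^H_{\widetilde M}[2n])$ is just $\Omega^{k}_{\widetilde M}[2n-k]$, a single sheaf placed in one degree, so $R\pi_*$ of it has cohomology sheaves $R^j\pi_*\Omega^k_{\widetilde M}$ in degrees $j+k-2n$. Grauert–Riemenschneider-type vanishing and the semismallness estimate on fiber dimensions then bound the codimension of the support of these higher direct images: $\operatorname{codim}\supp R^j\pi_*\Omega^k_{\widetilde M}\ge 2j$ (this is the analogue for $\Omega^k$ of the statement that $R^j\pi_*\cO$ is supported in codimension $\ge 2j$, which holds for semismall maps between smooth and symplectic — here one uses that $\widetilde M$ is symplectic so $\Omega^k_{\widetilde M}\cong \Omega^{2n-k}_{\widetilde M}$ and a duality argument). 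Comparing with the right-hand side, the $\gr^F_{-k}\dr$ of the deeper $\ic$-summands live on smaller strata, and matching supports degree by degree yields exactly the desired bound $\cd\supp\cH^j(\gr^F_{-k}\dr(\ic_M)[k-2n])\ge 2j+2$ for $j\ge 1$ (the extra $+2$ over the naive $2j$ coming from the fact that the $j=0$ part is the whole of $M$ and the first genuinely higher contribution already sits two codimensions down, by semismallness applied to the non-trivial summands). Finally, "m-perverse coherent" is then immediate from the definition (Definition \ref{weaklyperverse}) since the weaker inequality $\cd\supp\cH^j\ge 2j$ is subsumed, and the "strongly" part is precisely the strict bound just proved; then Theorem \ref{introsymmetry} applies verbatim to give symplectic Hard Lefschetz.

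The main obstacle I anticipate is the precise bookkeeping of the support estimate for $R^j\pi_*\Omega^k_{\widetilde M}$ and extracting the sharp constant $2j+2$ rather than $2j$. One must be careful that the naive bound from semismallness and Grauert–Riemenschneider gives $2j$, and the improvement to $2j+2$ for $j\ge 1$ relies on the \emph{symplectic} nature of $\widetilde M$ (so that the relevant part of the decomposition has no summands supported in codimension exactly $2j$ contributing in the wrong degree) — essentially the same mechanism by which symplectic resolutions are \emph{strictly} semismall away from the open stratum. I would handle this by a local analysis near a generic point of each stratum, using the normal-cone description of the symplectic singularity (by a theorem of Kaledin, the formal neighborhood of a point is a product of a smooth factor and a symplectic singularity with conical structure), reducing the codimension count to the cone point case where it can be checked directly. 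An alternative, possibly cleaner route is to invoke the perverse-coherent formalism directly: the summand $\ic_M$ is, by Saito's theory, the graded-de-Rham of a \emph{pure} Hodge module, and Saito's strictness plus the already-known smooth case (Schnell) localized étale gives the bound without re-deriving the fiber-dimension estimates — but this still ultimately rests on semismallness of the resolution, so the geometric input is unavoidable.
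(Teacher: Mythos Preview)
Your main argument—apply the decomposition theorem to the semismall map $\pi$, take $\gr^F_{-k}\dr$, and bound $\cd\supp R^j\pi_*\Omega^k_{\widetilde M}$ via fiber dimensions—is exactly what the paper does in Lemma~5.1, and it yields only the weak bound $\cd\supp\cH^j\cC_k\ge 2j$. You recognize this yourself, but the mechanisms you propose for upgrading to $2j+2$ do not work. Symplectic resolutions are \emph{not} strictly semismall in general: the relevant strata of a semismall symplectic resolution can and do have codimension exactly $2\ell$ where $\ell$ is the fiber dimension (think of the Hilbert--Chow morphism, or the Springer resolution of the nilpotent cone). And your remark that ``the first genuinely higher contribution already sits two codimensions down'' applies to the \emph{non-$\ic_M$} summands $N$, which are indeed supported on deeper strata—but that is the wrong direction: you need to bound the support of the $\ic_M$ summand itself, and knowing that the \emph{other} summands of $R^j\pi_*\Omega^k_{\widetilde M}$ have small support gives you nothing for $\ic_M$.

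The paper's actual proof makes the local product decomposition the \emph{primary} engine rather than an afterthought. It runs by induction on $\dim X$: at a closed point $x$ of a positive-dimensional open stratum $X_i^\circ$, Kaledin's theorem gives an \'etale neighborhood $U\cong Y_x\times V$ with $V$ smooth symplectic and $Y_x$ a lower-dimensional symplectic variety which (crucially) inherits a symplectic resolution (Proposition~\ref{propslicehassymplecticresolution}). The box-product formula for $\gr^F\dr$ then reduces the support bound on $U$ to the inductive hypothesis on $Y_x$. This handles everything except the zero-dimensional stratum, where one must show $\cH^j\cC_k=0$ for $j\ge n$. That vanishing (Lemma~\ref{singdim0}) is the genuine work: it uses Kaledin's vanishing $H^p(\mathfrak X,\Omega^q_{\mathfrak X})=0$ for $p>q$ on the formal neighborhood of the exceptional fiber, the bound $R^p\pi_*\Omega^q=0$ for $p+q>\dim(\widetilde X\times_X\widetilde X)$, and a Hodge--de Rham spectral sequence comparison at the cone point to match $H^n(\mathfrak X,\Omega^n_{\mathfrak X})$ exactly with the skyscraper contribution $H^{2n}(F_i,\CC)$ from the decomposition theorem, leaving nothing over for $\cH^n\cC_n$. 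Your proposal gestures at ``the cone point case where it can be checked directly,'' but this check is not direct and constitutes the heart of the argument.
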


The proof relies on the local structure of the holomorphic symplectic variety and the geometry of the symplectic resolution. The assumption on the existence of the symplectic resolution is necessary, and it's easy to construct counter-examples for which $\gr^F_{-k}\dr(\ic_X)$ are not m-perverse coherent. 

We emphasize that the support condition is an \'etale-local condition by the analytic nature of mixed Hodge modules. There are holomorphic symplectic varieties which only admit symplectic resolutions \'etale-locally but not globally, see \cite[Example 3.2(4)]{bakker2022global}. The theorem allows one to deduce the symplectic Hard Lefschetz symmetry for all singular Higgs moduli spaces over reduced curves, thanks to the local-model description using hypertoric varieties constructed by Mauri-Migliorini \cite{mauri2022hodge}, as we shall se below.

\subsection*{1.4 Moduli of Higgs bundles}

Let $C$ be a nonsingular irreducible projective curve of genus $g\ge 2$. The \textit{Dolbeault/Higgs moduli space} $M(n,d)$ is the coarse moduli space of rank $n$ degree $d$ semistable Higgs bundles on $C$. When $n, d$ are not coprime, the moduli space $M(n,d)$ is a non-proper singular symplectic variety with the Hitchin fibration $\chi(n,d):M(n,d)\rightarrow A_n$. The singular locus is given by strictly polystable Higgs bundles, which is related to partition of $n$ by multirank $\underline{n}$ and multiplicity $\underline{m}$. 

The \'etale-local structure of this moduli space is well-described using the deformation theory of Higgs bundles (cf. \cite{mauri2022hodge}). We can consider the open locus $A^{red}_n\subset A_n$ consisting of the reduced characteristic polynomials where multiplicities in $\underline{m}$ are $1$, and $M^{red}(n,d):=\chi(n,d)^{-1}(A^{red}_n)$. The moduli space over reduced Hitchin base $M^{red}(n,d)$ admits a Whitney stratification
$$M^{red}(n,d)=\coprod_{\underline{n}}M^\circ_{\underline{n}}(d)$$
and analytic normal slice $W_{\underline{n}}$ through $M^\circ_{\underline{n}}(d)$. Remarkably, $W_{\underline{n}}$ is isomorphic to a Nakajima quiver variety and admits symplectic resolutions. One can then verify the support condition for singular Higgs moduli spaces over the reduced Hitchin base. This gives the following theorem,

\begin{thm}[Theorem \ref{perv=Hodgehiggs}]\label{introsingularhiggs}
    Let $\chi(n,d):M^{red}(n,d)\rightarrow A_n^{red}$ be the Hitchin fibration over the reduced Hitchin base. Then the symplectic Hard Lefschetz theorem \ref{introsymmetry} holds for $M^{red}(n,d)$. If moreover $M^{red}(n,d)$ admits a symplectic resolution, then the perverse-Hodge complexes $G_{i,k}$ for $\chi(n,d)$ satisfy $G_{i,k}\simeq G_{k,i}$ in the derived category.
\end{thm}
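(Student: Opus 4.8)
The plan is to deduce both conclusions from the general results of the paper once the local geometry of the Higgs moduli space, due to Mauri-Migliorini, is fed in. The only point that actually needs to be checked is that $M^{red}(n,d)$ is a holomorphic symplectic variety admitting \'etale-locally symplectic resolutions. Granting this, Theorem \ref{5.4} applies to $M=M^{red}(n,d)$ (of dimension $2\dim A_n$): it gives that the graded de Rham complexes $\gr^F_{-k}\dr(\ic_{M^{red}(n,d)})$ are m-perverse coherent, that the support condition \eqref{thm1.1support} holds, and therefore --- by Theorem \ref{introsymmetry} --- that the maps $\sigma^k$ on $\gr^F_{\pm k}\dr(\ic_{M^{red}(n,d)}(n))$ are isomorphisms, i.e.\ symplectic Hard Lefschetz holds for $M^{red}(n,d)$. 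For the second assertion, once a \emph{global} symplectic resolution of $M^{red}(n,d)$ is assumed, I would invoke Theorem \ref{sympP=H} directly to obtain $G_{i,k}\simeq G_{k,i}$ in $D^b_{coh}(A_n^{red})$. It is worth noting at the outset that, although $M^{red}(n,d)$ is non-proper, the Hitchin map $\chi(n,d)$ is proper with Lagrangian generic fibres, so Saito's decomposition theorem does apply to $R\chi(n,d)_*\ic_{M^{red}(n,d)}(n)$ and the complexes $\cP_i$ and $G_{i,k}$ are defined.

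To establish the required local statement, I would first recall that $M(n,d)$ is a symplectic variety --- it is normal and the algebraic symplectic form on its stable locus extends to a resolution --- and that these properties descend to the open subvariety $M^{red}(n,d)=\chi(n,d)^{-1}(A_n^{red})$. Then I would quote the local structure of \cite{mauri2022hodge}: $M^{red}(n,d)$ carries a finite Whitney stratification $\coprod_{\underline n}M^\circ_{\underline n}(d)$, and through any point $p\in M^\circ_{\underline n}(d)$ there is an analytic-local (hence \'etale-local) isomorphism identifying the germ of $M^{red}(n,d)$ at $p$ with a product $(U,0)\times(W_{\underline n},0)$, where $U$ is a smooth symplectic germ inside the stratum and $W_{\underline n}$ is the analytic normal slice. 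By loc.\ cit., $W_{\underline n}$ is isomorphic to a Nakajima quiver variety that admits a symplectic resolution $\pi_{\underline n}\colon\widetilde W_{\underline n}\to W_{\underline n}$. Then $\mathrm{id}_U\times\pi_{\underline n}$ is a symplectic resolution of $U\times W_{\underline n}$: it is proper and birational, an isomorphism over the regular locus, and the sum of the symplectic form on $U$ with $\pi_{\underline n}^*$ of the one on $W_{\underline n}$ is a nondegenerate closed two-form extending the form on the regular locus. Since such a local model exists near every point, $M^{red}(n,d)$ admits \'etale-locally symplectic resolutions.

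With this in hand both conclusions follow mechanically: Theorem \ref{5.4} supplies the m-perverse coherence and the support condition for $\ic_{M^{red}(n,d)}$, Theorem \ref{introsymmetry} then yields symplectic Hard Lefschetz, and --- under the extra hypothesis of a global symplectic resolution --- Theorem \ref{sympP=H} upgrades this to the perverse-Hodge symmetry $G_{i,k}\simeq G_{k,i}$. One could additionally record, as in Corollary \ref{numP=H}, the corresponding numerical consequence on the intersection cohomology, being mindful that $M^{red}(n,d)$ is non-proper and that the relevant finiteness statements must be handled with the usual care.

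The main obstacle is not this deduction but the two geometric inputs it rests on. The decisive fact is that the normal slices $W_{\underline n}$ arising over the \emph{reduced} Hitchin base are quiver varieties that genuinely admit symplectic resolutions --- a property that can fail for general Nakajima quiver varieties and is exactly what breaks down over the non-reduced locus, so the hypothesis that multiplicities in $\underline m$ are $1$ is essential. A secondary point to treat carefully is the passage from the analytic-local product model to the global statement: one must check that being \'etale-locally symplectically resolvable is unaffected by the smooth factor $U$ (immediate, since one crosses a resolution of $W_{\underline n}$ with $\mathrm{id}_U$) and, more importantly, that the m-perverse coherent and support conditions of Theorem \ref{5.4} --- which are \'etale-local by the analytic nature of mixed Hodge modules --- are then verifiable purely on these local models. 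The remaining ingredients, such as tracking Tate twists and shifts and identifying the summands $\cP_i$, are bookkeeping.
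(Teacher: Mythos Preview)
Your proposal is correct and follows essentially the same route as the paper: feed in the Mauri--Migliorini local model, verify the support condition locally, then invoke the general symplectic Hard Lefschetz and perverse--Hodge symmetry theorems. The only cosmetic difference is that the paper verifies the support condition on the product $Y(\Gamma_{\underline{n}},0)\times\CC^a$ by applying Theorem~\ref{5.4} to each factor separately and then combining via the product Lemma~\ref{codimsuppinductionproduct}, whereas you instead build an \'etale-local symplectic resolution $\mathrm{id}_U\times\pi_{\underline{n}}$ directly and appeal to the \'etale-local formulation (Corollary~\ref{reconfromXCor5.8} / Theorem~\ref{1.4}); both arguments are equivalent and yield the same conclusion.
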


\subsection*{1.5 Connection to other works}
We briefly review the history and other works along this direction. The symmetry theorem of perverse-Hodge complexes was originally inspired by the P=W conjecture and its compact analogy. The P=W conjecture by de Cataldo-Hausel-Migliorini \cite{de2012topologyP=W} predicts that the perverse filtration on $M(n,d)$ induced by the Hitchin morphism matches the weight filtration on the Betti moduli space $M_B(n,d)$ via non-abelian Hodge theory. When $n,d$ are coprime, the Higgs moduli space is non-singular and the conjecture has been recently resolved in \cite{maulik2022p=W1st,hausel2022p=W2nd,maulik2023perverse}. However, its singular variant remains open. 

The conjecture suggests that the perverse filtration of $M(n,d)$ can be computed via Hodge theory (up to a hyperk\"ahler twist), this motivates the numerical "Perverse=Hodge" for Lagrangian fibration $f:M\rightarrow B$ in the compact case (cf. \cite{shen2022topology}). The categorification of this symmetry phenomenon using Hodge modules leads to the notion of perverse-Hodge complexes $G_{i,k}$ (cf. \cite{shen2023perverse-Hodgecomplex}), which works in non-compact setting. 

Towards generalizing the results to the singular cases, the numerical "Perverse-Hodge" result and the LLV algebra structure on intersection cohomology were 
established in \cite{felisetti2022intersection} for irreducible symplectic varieties admitting symplectic resolutions, and \cite{tighe2022llv} when singularities are isolated. Our work provides partial answers on categorifying the results above on the level of perverse-Hodge complexes.

\subsection*{1.6 Organization}
The paper is structured as follows. Sections $\S 2$ and $\S 3$ consist of preliminaries on the theory of mixed Hodge modules which allows us to define perverse-Hodge complexes, and a brief review on the geometry of symplectic singularities. In section $\S 4$ we review subcategories of $D^b_{coh}$ defined by perverse coherent t-structures and discuss the extension property of complexes in them. We prove the main results in section $\S 5$, we first prove Theorem \ref{1.4} using the local product decomposition for symplectic singularities and the symplectic resolution around isolated singularities. By extending the action of symplectic forms on the regular locus, we establish the relative symplectic Hard Lefschetz theorem and then deduce Theorem \ref{introsymmetry}, as well as Theorem \ref{intronumP=H} in the compact situation. Finally, in section $\S 6$, we apply our result to singular Higgs moduli space and prove Theorem \ref{introsingularhiggs}.

\textbf{Notations and Set up}. Unless specified, we work over field of complex numbers $\CC$. A variety is an integral separated scheme of finite type over $\CC$. For a scheme $X$ and a point $x\in X$, the residue field at $x$ is denoted by $\kappa(x)$. We use the notion of holomorphic symplectic varieties in the sense of Beauville \cite{beauville1999symplectic}. In section $\S 5$, we denote $\gr^F_{-k}\dr(\ic_X)[k-2n]$ by $\cC_k$.

\subsection*{Acknowledgement}
The author would like to than Christian Schnell for suggesting the problem and consistent support, also for reading the early draft carefully and providing helpful feedback. He thanks Junliang Shen for giving a lecture series during a hyperk\"ahler workshop at the Simons Center in January of 2023, which initialized the author's interest on this topic. He thanks Mark Andrea de Cataldo for teaching him the foundation of Higgs moduli spaces. He would also like to thank Benjamin Tighe, David Fang, Qizheng Yin, Radu Laza, Siqing Zhang and Yoon-joo Kim for helpful discussions or comments.

\section{Preliminaries}
Consider a Lagrangian fibration $f:M\rightarrow B$. It is well-known that a nonsingular fiber is a complex torus. If $f$ is smooth, then $f$ is a family of abelian varieties. The variation of Hodge structures of $V=R^1f_*\QQ_M$ is the key to understanding the topology of the Lagrangian fibration.  
In general, in order to further control the topology of singular fibers, we need to consider the extensions of variation of Hodge structures across the locus where $f$ has singular fibers. Saito's theory of Hodge modules \cite{saito1988modules,saito1990mixed} provides a general framework to this end. 

\subsection{Overview of Hodge modules}
Saito introduced Hodge modules as vast generalizations of variation of Hodge structures, by allowing perverse sheaves instead of just local systems. Let $X$ be a complex manifold. The basic objects are filtered regular holonomic $\cD_X$-modules with $\QQ$-structure, given by a triple $M=(\cM, F_\bullet \cM, K)$ consisting of the following 
\begin{enumerate}
    \item $\cM$ is a regular holonomic right $\cD_X$-module with a good filtration $F_\bullet \cM$.
    \item $K$ is a perverse sheaf with an isomorphism 
    $$\dr(\cM)\simeq \CC\otimes_{\QQ}K$$
    via the Riemann-Hilbert correspondence.
\end{enumerate}

Hodge modules are such triples satisfying technical conditions which provides the stability when applying nearby and vanishing cycle functors. one can also define the notions of weight and polarization. Let $HM(X,w)$ (resp. $HM^p_Z(X,w)$) denote the category of (polarizable) Hodge modules of weight $w$ on $X$ (with strict support $Z$). For a possibly singular analytic space $X$, one can define Hodge modules on it by taking a global embedding into a complex manifold $i:X\hookleftarrow Y$ and taking all Hodge modules whose support is contained in the image $i(X)\subset Y$. 

In particular, for irreducible varieties $X$, there is a polarizable pure Hodge module $\ic_X^H$ associated to the intersection complex of $X$, which we call the intersection cohomology Hodge module.

Let $Z\subset X$ be an irreducible closed analytic subvariety. The main results in \cite{saito1990mixed} says that every polarizable variation of $\QQ$-Hodge structure of weight $w-\dim Z$ on a Zariski-open subset of $Z$ extends uniquely to an object of $HM^p_Z(X,w)$. Conversely, every object of $HM^p_Z(X,w)$ is obtained in this way. 
The category of mixed Hodge modules $MHM(X)$ can also be defined as the generalization of admissible variations of mixed Hodge structures.

The major accomplishments of Saito's theory is on generalizing the decomposition theorem of perverse sheaves developed in \cite{beilinson2018faisceaux}. It's a consequence of the following hard Lefschetz theoerem,

\begin{thm}[\cite{saito1988modules}, Theorem 5.3.1]\label{Saitorelhardlef}
Let $f:X\rightarrow Y$ be a proper morphism between two complex manifolds, and let $M\in HM^p(X,w)$. Let $\ell$ be the first Chern class of a $f$-ample line bundle on $X$. Then $\cH^if_*M\in HM^p(Y,w+i)$ and we have isomorphisms of Hodge modules \begin{equation}\label{relHL} \ell^i:  \cH^{-i}(f_*M)\xrightarrow{\simeq} \cH^{i}(f_*M)(i) \end{equation} 
where $(i)$ denotes the Tate twist shifting the Hodge filtration $F_{\bullet-i}$ by $i$ \end{thm}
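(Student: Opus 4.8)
The plan is to follow Saito's strategy, which transposes the classical Lefschetz-pencil proof of relative hard Lefschetz for perverse sheaves (Deligne, Beilinson--Bernstein--Deligne) into the category of polarizable Hodge modules. First I would reduce to the essential case: the statement is local on $Y$, so one may assume $f$ is projective, and factoring $f$ through a closed embedding $X\hookrightarrow\PP^N\times Y$ followed by the projection reduces to $\ell$ the hyperplane class. The target is really a package of three assertions, to be proven simultaneously by induction on the dimension of the fibres of $f$ restricted to $\supp M$: (i) \emph{strictness} of $f_*M$, so that the filtered de Rham complex degenerates, each $\cH^i f_*M$ becomes a well-defined filtered regular holonomic $\cD$-module with $\QQ$-structure, and $f_*M\cong\bigoplus_i\cH^i f_*M[-i]$; (ii) $\cH^i f_*M\in HM^p(Y,w+i)$; (iii) the cup product $\ell^i$ of \eqref{relHL} is an isomorphism. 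Using that a polarizable pure Hodge module decomposes as a finite sum of ones with irreducible strict support, I would reduce further to the case where $M$ has strict support an irreducible subvariety $Z\subseteq X$.

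Next I would run the inductive step via a Lefschetz pencil. Choosing a generic pencil in the very ample linear system and blowing up the intersection of its axis with $Z$ yields $\rho\colon\tilde X\to X$, projective and an isomorphism over the generic point of $Z$, together with a projective morphism $g\colon\tilde X\to\PP^1\times Y$ whose fibres meet the strict support of the transported module in strictly smaller dimension. After moving $M$ to $\tilde X$ (keeping track of the Hodge filtration and of the correction terms concentrated on the exceptional and base loci through the nearby/vanishing cycle calculus), the inductive hypothesis applies to $g$ and gives strictness, polarizability of its cohomology modules, and relative hard Lefschetz for the tautological relatively ample class on $\tilde X/(\PP^1\times Y)$. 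What remains is to push forward along $p\colon\PP^1\times Y\to Y$.

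Since $\PP^1$ is a curve, $p_*$ of a polarizable Hodge module has cohomology only in degrees $-1,0,1$, so the problem collapses to a monodromy statement around the finitely many critical values of the pencil: the nilpotent logarithm of the local monodromy carries its monodromy weight filtration, nearby and vanishing cycles of polarizable Hodge modules are again polarizable, and the local invariant cycle theorem together with the semisimplicity built into Saito's category force the $\ell$-cup-product maps between adjacent perverse cohomology sheaves to be isomorphisms. Feeding this back, the polarization on $\cH^{-i}f_*M$ is constructed from the one on $M$ via the primitive Lefschetz decomposition, with positivity supplied by the Hodge--Riemann bilinear relations. I expect the main obstacle to be exactly the self-referential character of this induction --- strictness, polarizability and the Lefschetz isomorphism each feed into the proofs of the others --- together with the bookkeeping required to propagate the Hodge filtration through blow-ups, proper direct images, and vanishing cycle functors; for the purposes of the present paper the theorem is simply quoted from \cite{saito1988modules}.
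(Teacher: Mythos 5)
The paper does not prove this statement at all --- it is quoted verbatim from \cite{saito1988modules} as an external input, so there is no internal proof to compare against. Your sketch is a faithful high-level account of Saito's actual argument (reduction to the projective case with the hyperplane class, simultaneous induction on fibre dimension proving strictness, polarizability and the Lefschetz isomorphism \eqref{relHL} together, the Lefschetz-pencil step transposed from Beilinson--Bernstein--Deligne, and the final monodromy/local-invariant-cycle analysis over $\PP^1$), and you correctly identify that the genuine difficulty lies in the interlocking nature of the induction rather than in any single step; for the purposes of this paper, citing the theorem is all that is required.
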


\begin{cor}[Saito's Decomposition theorem]
    Let $f:X\rightarrow Y$ be a proper morphism and let $M$ be a pure Hodge module. Then there is a non-canonical isomorphism in the derived category of Hodge modules
    $$f_*M\cong \bigoplus \cH^i(f_*M)[-i].$$
\end{cor}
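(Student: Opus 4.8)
The plan is to deduce the decomposition from the relative Hard Lefschetz Theorem \ref{Saitorelhardlef} via Deligne's splitting formalism, exactly as in the constructible setting of \cite{beilinson2018faisceaux}. First I would reduce to the situation of that theorem: the assertion is local on $Y$, so we may assume $f$ is projective and fix the first Chern class $\ell$ of an $f$-ample line bundle on $X$; and if $X$ or $Y$ is singular, Saito's formalism computes $f_*M$ and its cohomology objects after a local embedding into a smooth ambient space, so Theorem \ref{Saitorelhardlef} applies verbatim. Set $K:=f_*M$, an object of $D^b(MHM(Y))$. By Theorem \ref{Saitorelhardlef} each $\cH^i(K)$ is a polarizable pure Hodge module of weight $w+i$, and the class $\ell$ on $X$ induces, via cup product on $M$ followed by pushforward, a morphism $\ell:K\to K(1)[2]$ in $D^b(MHM(Y))$ whose iterates realize, for every $i\ge 0$, the Hard Lefschetz isomorphisms $\ell^i:\cH^{-i}(K)\xrightarrow{\sim}\cH^i(K)(i)$.

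The core of the argument is Deligne's decomposition lemma, applied in the triangulated category $D^b(MHM(Y))$ (the Tate twist being an autoequivalence, $\ell$ may be treated as a degree-two endomorphism): if a bounded object $K$ of the derived category of an abelian category carries such an operator $\ell$ whose powers $\ell^i$ induce isomorphisms $\cH^{-i}(K)\cong\cH^i(K)$ for all $i\ge 0$, then there is a non-canonical isomorphism $K\cong\bigoplus_i\cH^i(K)[-i]$, compatible with the primitive (Lefschetz) decomposition of the cohomology objects. I would either cite this lemma or reproduce its proof, which proceeds by induction on the amplitude of $K$ using the canonical truncation triangle $\tau_{\le b-1}K\to K\to\cH^b(K)[-b]\xrightarrow{+1}$: the obstruction to splitting it lies in $\operatorname{Hom}\bigl(\cH^b(K)[-b],(\tau_{\le b-1}K)[1]\bigr)$ and is annihilated by an explicit homotopy built from $\ell$ and the inverse of $\ell^b:\cH^{-b}(K)\xrightarrow{\sim}\cH^b(K)$, so that $\cH^b(K)[-b]$ splits off as a direct summand; symmetrically one peels off $\cH^a(K)[-a]$ from the bottom, and the induction hypothesis then applies to the remaining complement, whose Lefschetz operator still satisfies Hard Lefschetz by the primitive decomposition.

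Feeding $K=f_*M$ into the lemma yields the stated non-canonical isomorphism $f_*M\cong\bigoplus_i\cH^i(f_*M)[-i]$ in the derived category of Hodge modules, each summand being a shifted polarizable pure Hodge module of weight $w+i$ by Theorem \ref{Saitorelhardlef}. Combining this further with the semisimplicity of polarizable pure Hodge modules---every one being a finite direct sum of simple objects with strict support---recovers the full strength of the decomposition theorem by strict support. I expect the only genuine subtlety to be the inductive splitting step of Deligne's lemma, where the vanishing of the obstruction class uses the Hard Lefschetz \emph{isomorphisms} in an essential way and not merely injectivity or surjectivity; the one remaining ingredient that deserves a word of care is that $\ell$ is an honest morphism in $D^b(MHM(Y))$ rather than just a compatible family of maps on cohomology objects, which is exactly what the realization of cup product in Saito's formalism supplies.
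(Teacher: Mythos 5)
Your proposal is correct and follows the same route the paper intends: the paper states this corollary immediately after Theorem \ref{Saitorelhardlef} with no written proof, presenting it precisely as a consequence of relative Hard Lefschetz via Deligne's splitting formalism, which is exactly the argument you reconstruct. The details you supply (the reduction to the projective/embedded case, the lift of $\ell$ to a morphism in $D^b(MHM(Y))$, and the inductive truncation argument killing the obstruction class) are the standard and correct way to fill in that deduction.
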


\subsection{Graded de Rham complexes}
To mixed Hodge modules one can associated with the graded de Rham complexes in the bounded derived category of coherent sheaves. Let $Y$ be a complex manifolds with pure dimension $d$ and $M=(\cM,F_\bullet \cM,K;W)$ be a mixed Hodge module on $Y$ with Hodge filtration $F_\bullet \cM$ and weight filtration $W$, where $\cM$ is the underlying regular holonomic right $\cD_Y$-module. 

Recall that for a regular holonomic right $\cD_Y$-module $\cM$, we have the de Rham functor
\begin{equation}
    \dr_Y(\cM):=\cM\otimes_{\cD_Y}^L \cO_X
\end{equation}
which is an object in the category of perverse sheaves $\Perv(Y;\CC_Y)$. Using the \textbf{Spencer resolution} (cf. \cite[Lemma 1.5.27]{hotta2007perverseDmodulereptheory}) this can be represented as a complex
\begin{equation*}
    \dr_Y(\cM)=\left[ M\otimes  \bigwedge^d\cT_Y\rightarrow \cdots \rightarrow M\otimes \cT_Y\rightarrow M \right]
\end{equation*}
which lives in cohomological degrees $-d,\dots, 0$. We call the complex above as the \textbf{de Rham complex} of the Hodge module $M$, and denote it by $\dr_Y(M)$. We sometimes abbreviate the notation as $\dr(M)$ when there's no confusion. 

\begin{defn}
    By taking the associated graded, the Hodge filtration on $\cM$ induces,
\begin{equation*}
    \gr^F_{-k}\dr(M)=\left[\gr^F_{-k-d}\cM\otimes \bigwedge^d \cT_Y \rightarrow\cdots \gr^F_{-k-1}\cM\otimes \cT_Y \rightarrow \gr^F_{-k}\cM \right]
\end{equation*}
which lives in cohomological degrees $-d,\dots, 0$. 
We call the above complex the \textbf{graded de Rham complex} of $M$. The construction above defines a functor:
\begin{equation*}
    \gr^F_{-k}\dr: D^bMHM(Y)\rightarrow D^b_{coh}(Y,\cO_Y)
\end{equation*}
from the bounded derived category of mixed Hodge modules to the bounded derived category of coherent sheaves.
\end{defn}

The construction of graded de Rham complex is compactible with proper pushforward and duality. 
\begin{lem}[\cite{kebekus2021extending}, Proposition 4.10]\label{fcommuDR}
    Let $f:X\rightarrow Y$ be a proper morphism, then we have the natural isomorphism of functors,
    \begin{equation*}
        Rf_*\circ \gr^F_{-k}\dr=\gr^F_{-k}\dr\circ Rf_*.
    \end{equation*}
\end{lem}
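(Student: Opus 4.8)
The plan is to reduce the statement to the corresponding compatibility at the level of \emph{filtered} $\cD$-modules, where it becomes the combination of two classical facts: that the de Rham functor intertwines $\cD$-module pushforward with the derived pushforward of $\cO$-complexes, and Saito's strictness theorem for the pushforward of Hodge modules. (For $f$ proper, the pushforward on $D^bMHM(Y)$ appearing on the right is the one underlying $f_+$ on regular holonomic $\cD_X$-modules.)

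First I would recall that $\gr^F_{-k}\dr$ factors through a filtered de Rham functor: for a filtered right $\cD_Y$-module $(\cM,F_\bullet\cM)$ the Spencer-type complex computing $\dr_Y(\cM)$ carries the filtration induced by $F_\bullet\cM$, producing an object $\dr^F_Y(\cM)$ of the filtered derived category of $\cO_Y$-modules, and $\gr^F_{-k}$ applied to it recovers $\gr^F_{-k}\dr_Y(M)$. Second, I would use that, by the very definition of $f_+$ through the transfer bimodule $\cD_{X\rightarrow Y}=\cO_X\otimes_{f^{-1}\cO_Y}f^{-1}\cD_Y$ together with the projection formula, there is a natural isomorphism $\dr_Y(f_+\cM)\cong Rf_*\dr_X(\cM)$ of complexes of $\cO_Y$-modules; since $\cD_{X\rightarrow Y}$ carries its order filtration and $Rf_*$ is $\cO$-linear, this refines to a natural isomorphism $\dr^F_Y(f^F_+\cM)\cong Rf_*\dr^F_X(\cM)$ in the filtered derived category, where $f^F_+$ denotes the filtered pushforward. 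Third, for $M$ in $D^bMHM(X)$ and $f$ proper, Saito's theorem ensures that $f^F_+M$ is strict, which is exactly the input needed for forming $\gr^F_{-k}$ to commute with $f^F_+$ (it distributes over the derived pushforward and not merely over its cohomology sheaves). Combining the three steps yields
\begin{equation*}
\gr^F_{-k}\dr_Y(f_*M)=\gr^F_{-k}\bigl(Rf_*\dr^F_X(M)\bigr)=Rf_*\,\gr^F_{-k}\dr_X(M),
\end{equation*}
the last equality being the fact that $Rf_*$ commutes with the exact functor $\gr^F_{-k}$ once the complex is known to be strict; naturality in $M$ is inherited stepwise. When $X$ and $Y$ are singular, one embeds them into ambient complex manifolds and runs the same argument; alternatively, by Chow's lemma and factorization one reduces to a closed immersion, where Kashiwara's equivalence makes both sides tautologically equal, and a smooth projective projection $Y\times\PP^N\rightarrow Y$, where the relative de Rham complex is explicit.

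The main obstacle is the bookkeeping in the filtered derived category: one must verify that the classical de Rham/pushforward compatibility is genuinely a \emph{filtered} isomorphism --- tracking the order filtration on $\cD_{X\rightarrow Y}$, the behaviour of $\otimes^L$ on associated gradeds, and the Tate and shift conventions for right $\cD$-modules --- and one must invoke strictness at the correct place. Once those points are in place the argument is formal.
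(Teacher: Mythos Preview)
The paper does not give its own proof of this lemma; it simply records the statement with a citation to \cite[Proposition 4.10]{kebekus2021extending}. Your proposal therefore goes beyond what the paper does: you are supplying the actual argument rather than deferring to the literature.

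Your sketch is the standard one and is correct in outline. The three ingredients you isolate --- (i) the filtered refinement of the classical isomorphism $\dr_Y(f_+\cM)\cong Rf_*\dr_X(\cM)$ via the transfer bimodule, (ii) Saito's strictness of the filtered direct image for polarizable Hodge modules under proper maps, and (iii) the fact that strictness is exactly what allows $\gr^F_{-k}$ to commute with $Rf_*$ --- are precisely what underlies the result as proved in Saito's original work and in the reference the paper cites. Your remark about reducing to closed immersions and projections via Chow's lemma, and about handling singular $X,Y$ by ambient embeddings, is also the standard maneuver. The only caution is the one you flag yourself: the filtered compatibility in step (ii) requires care with the filtration on $\cD_{X\rightarrow Y}$ and with the filtered derived category formalism, but this is bookkeeping rather than a gap.
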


\begin{lem}[\cite{saito1988modules}, \S 2.4.3]\label{grdrdual}
    Let $\omega_X^\bullet$ be the dualizing complex of $X$ and $M$ be a Hodge module of weight $n$. Then for any integer $k$, we have an isomorphism in $D^b_{coh}(X,\cO_X)$
    $$R\cH om_{\cO_X}(\gr^F_{-k}\dr(M),\omega^\bullet_X)\cong \gr^F_{k}\dr(\mathbf{D}(M))$$
    where $\mathbf{D}$ is the dualizing functor for Hodge modules.
\end{lem}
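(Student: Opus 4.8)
Since Lemma~\ref{grdrdual} is a filtered refinement of the compatibility of the de Rham functor with duality, the plan is to first establish the isomorphism on the level of \emph{filtered} $\cO_X$-modules and then pass to the associated graded, where the only genuinely non-formal input is the strictness of the Hodge filtration built into Saito's theory.

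First I would record that $\dr(\cM)$ carries a canonical filtration, with $F_p$ in cohomological degree $-i$ equal to $F_{p-i}\cM\otimes_{\cO_X}\wedge^i\cT_X$, whose $(-k)$-th graded piece is precisely the graded de Rham complex of the Definition above. Equivalently, after the Rees construction one obtains a complex $\dr^R(\cM)$ of graded $\cO_X[z]$-modules with fiber at $z=1$ equal to $\dr(\cM)$ and with $\dr^R(\cM)\otimes^L_{\CC[z]}\CC\simeq\bigoplus_k\gr^F_{-k}\dr(M)$; the point of this reformulation is that, $M$ being a Hodge module, the Hodge filtration is strict with respect to the de Rham complex, so this last tensor product is underived.

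Next I would invoke Saito's filtered duality: $\mathbf{D}(\cM,F)$ is defined as $R\cH om_{\cD_X}\bigl((\cM,F),(\cD_X,F)\otimes_{\cO_X}\omega_X^{-1}[\dim X]\bigr)$ with the induced filtration (\cite{saito1988modules}, \S 2.4), together with the classical fact---visible by replacing $\cM$ with its Spencer resolution by induced $\cD_X$-modules, for which everything is computed by bounded complexes of locally free $\cO_X$-modules and the duality is an explicit $\omega_X$-twisted Koszul duality---that the de Rham functor intertwines $\mathbf{D}$ with $R\cH om_{\cO_X}(-,\omega_X^\bullet)$ on the filtered level:
\[
\dr_X\bigl(\mathbf{D}(\cM,F)\bigr)\ \cong\ R\cH om_{\cO_X}\bigl(\dr_X(\cM,F),\,\omega_X^\bullet\bigr),
\]
where the dual filtration is the one with $\gr^F_p$ of a dual equal to the dual of $\gr^F_{-p}$. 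In Rees language this becomes an isomorphism $\dr^R(\mathbf{D}\cM)\cong R\cH om_{\cO_X[z]}\bigl(\dr^R(\cM),\omega_X^\bullet\otimes_\CC\CC[z]\bigr)$.

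Finally I would specialize at $z=0$. Since $\mathbf{D}M$ is again a Hodge module, both sides are strict---flat over $\CC[z]$---so $-\otimes^L_{\CC[z]}\CC$ is underived, commutes with the displayed isomorphism and with $R\cH om_{\cO_X}(-,\omega_X^\bullet)$, and yields $\gr^F\dr(\mathbf{D}\cM)\cong R\cH om_{\cO_X}\bigl(\gr^F\dr(\cM),\omega_X^\bullet\bigr)$; taking the weight-$k$ component and using that $\gr^F_k$ of a dual equals the dual of $\gr^F_{-k}$ gives the stated isomorphism. For singular $X$ one reduces to the smooth case via a closed embedding $X\hookrightarrow Y$, using that $\dr$, $\mathbf{D}$ and $R\cH om(-,\omega^\bullet)$ all commute with pushforward along closed embeddings. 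The step I expect to be the main obstacle to spell out carefully is the strictness of the Hodge filtration under duality---that here $\gr^F$ commutes with $R\cH om_{\cO_X}(-,\omega_X^\bullet)$ on the nose rather than only up to a spectral sequence---which is one of the foundational strictness properties of Hodge modules and is precisely what forces the index reversal $-k\mapsto k$ in the statement.
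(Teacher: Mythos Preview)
The paper does not prove Lemma~\ref{grdrdual} at all: it is simply quoted as a known result from \cite[\S 2.4.3]{saito1988modules}, with no argument given. So there is no ``paper's own proof'' to compare your proposal against.

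That said, your outline is exactly the standard argument behind Saito's \S 2.4.3: resolve $(\cM,F)$ by induced filtered $\cD$-modules (the filtered Spencer resolution), for which $\dr$ becomes a bounded complex of locally free filtered $\cO_X$-modules, so that $R\cH om_{\cO_X}(-,\omega_X^\bullet)$ is computed termwise and visibly interchanges $\gr^F_{-k}$ with $\gr^F_k$; then use strictness of the Hodge filtration to ensure that passing to $\gr^F$ commutes with the $R\cH om$. The Rees-module packaging you describe is a clean way to organize this. One small comment: the reduction to the smooth ambient $Y$ for singular $X$ is not really an extra step here, since in the paper's setup Hodge modules on singular $X$ are by definition Hodge modules on some smooth $Y$ supported on $X$, and both sides of the isomorphism are computed on $Y$ to begin with.
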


The following lemma suggests that $\gr_*^F\dr(\ic_X)$ is a natural replacement for $\Omega^*_X$.

\begin{lem}\label{smoothgrdrQ}
    Let $X$ be a smooth complex variety of dimension $2n$, we have
    $$\gr^F_{-k}\dr(\QQ_{X}(n)[2n])[k-n]=\Omega_X^{n+k}[0].$$
\end{lem}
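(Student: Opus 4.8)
The plan is to read off $\gr^F_{-k}\dr(\QQ_X(n)[2n])$ directly from the formula in the Definition, once the underlying filtered right $\cD_X$-module has been identified; since $X$ is smooth, no further Hodge-theoretic input is needed.

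First I would recall the underlying filtered $\cD$-module. For $X$ smooth of pure dimension $d=2n$, the pure Hodge module $\QQ_X^H[2n]$ has underlying right $\cD_X$-module $\omega_X$, with Hodge filtration normalized (in Saito's convention) so that $\gr^F_p\omega_X=\omega_X$ for $p=-2n$ and $\gr^F_p\omega_X=0$ for $p\neq-2n$. The Tate twist $(n)$ replaces the Hodge filtration $F_\bullet$ by $F_{\bullet-n}$, so $\QQ_X(n)[2n]$ has the same underlying $\cD$-module $\omega_X$, now with $\gr^F_p\omega_X=\omega_X$ concentrated in the single index $p=-n$.

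Next I would substitute $\cM=\omega_X$ with this one-step filtration into the formula for $\gr^F_{-k}\dr$ from the Definition. Its term in cohomological degree $-j$ (for $0\le j\le 2n$) is $\gr^F_{-k-j}\omega_X\otimes_{\cO_X}\bigwedge^{j}\cT_X$, which vanishes unless $-k-j=-n$, i.e.\ unless $j=n-k$. Hence at most one term of the complex is nonzero; the index $j=n-k$ lies in $\{0,\dots,2n\}$ precisely when $-n\le k\le n$, so for every other $k$ the whole complex is zero — consistent with the claim, since then $\Omega^{n+k}_X=0$ — while for $-n\le k\le n$ the complex is concentrated in cohomological degree $-(n-k)=k-n$, where it equals $\omega_X\otimes_{\cO_X}\bigwedge^{n-k}\cT_X$. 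Using the standard isomorphism $\omega_X\otimes_{\cO_X}\bigwedge^{j}\cT_X\cong\Omega^{2n-j}_X$ (contraction of the top form with $j$ tangent vector fields) with $j=n-k$, this becomes $\Omega^{n+k}_X$. Therefore $\gr^F_{-k}\dr(\QQ_X(n)[2n])\cong\Omega^{n+k}_X[n-k]$, and shifting by $[k-n]$ gives $\Omega^{n+k}_X[0]$, as asserted.

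The whole argument is bookkeeping; the only point requiring genuine care — and the closest thing to an obstacle — is keeping the Saito normalization of the Hodge filtration on $\QQ_X^H[2n]$, its behaviour under the Tate twist $(n)$, and the various cohomological shifts mutually consistent. There is no substantive difficulty beyond that.
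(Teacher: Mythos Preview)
Your argument is correct and matches the paper's proof essentially line for line: identify the underlying filtered right $\cD_X$-module of $\QQ_X(n)[2n]$ as $\omega_X$ with filtration jumping at $-n$, plug into the Spencer-type formula for $\gr^F_{-k}\dr$, and observe that only the term in degree $k-n$ survives, giving $\omega_X\otimes\bigwedge^{n-k}\cT_X\cong\Omega_X^{n+k}$. Your write-up is in fact slightly more explicit (you spell out the contraction isomorphism and the range $-n\le k\le n$), but there is no substantive difference.
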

\begin{proof}
    $\QQ_X(n)[2n]$ underlies the (pure) Hodge module  of weight $0$, $$\QQ^H_X(n)[2n]=(\omega_X,F_\bullet\omega_X, \QQ_X(n)[2n])\in \HM(X,d_X).$$ The filtration (due to the Tate twist) is given by 
    \begin{equation*}
        F_{-n-1}\omega_X=0 \quad \text{and} \quad F_{-n}\omega_X=\omega_X.
    \end{equation*}
    Its associated graded de Rham complex is (in cohomological degree $-2n,\dots, 0$)
    \begin{equation*}
        \gr^F_{-k}\dr(\QQ_X(n)[2n])=\left[\gr^F_{-k-2n}\omega_X\otimes \bigwedge^{2n} \cT_X \rightarrow\cdots \gr^F_{-k-1}\omega_X\otimes \cT_X \rightarrow \gr^F_{-k}\omega_X \right]
    \end{equation*}
    which is just $\Omega_X^{n+k}[n-k]$.\end{proof}

\subsection{Perverse-Hodge complexes and symmetry}
Suppose $M$ is a holomorphic symplectic variety of dimension $2n$ which admits a Lagrangian fibration $f:M\rightarrow B$. We assume that $B$ is a complex manifold of dimension $n$. For irreducible hyperk\"ahler varieties, this implies $B$ is isomorphic to $\PP^n$ by Hwang's theorem(cf. \cite{hwang2008hwangthm}, \cite{bakker2023Hwangthm}). After taking the Tate twist by $n$, let $\operatorname{IC}_M(n)$ be the intersection complex of $M$ which underlies a (pure) Hodge module of weight $0$. By Saito's decomposition theorem, we have
$$Rf_*\ic_M(n)\cong\bigoplus_{i=-n}^n\cP_i[-i],$$
where $\cP_i$ are pure Hodge modules of weight $i$. More precisely, $\cP_i$ is the triple $$(\cP_i, F_\bullet \cP_i, \pervH^i(Rf_*\ic_M(n)[2n]))$$
where $\pervH^i(Rf_*\ic_M(n)[2n]))$ is the underlying perverse sheaf and $F_\bullet$ is the Hodge filtration on associated $\cD$-module.
\begin{defn}
    We define the perverse-Hodge complex
    $$G_{i,k}=\gr^F_{-k}\dr(\cP_i)[-i].$$
\end{defn}
\begin{rem}
    We adapt the indexing in \cite{schnell2023hodge} by taking a Tate twist $(n)$, which is different from the indexing in \cite{shen2023perverse-Hodgecomplex} where the perverse-Hodge complexes are originally introduced. One has $\cG_{i,k}=G_{i-n,k-n}$ in \cite{shen2023perverse-Hodgecomplex}. 
\end{rem}

Now we can state the main theorem on the symmetry of perverse-Hodge complexes,
\begin{thm}[Corollary \ref{sympP=H}]
    Let $f:M\rightarrow B$ be a Lagrangian fibration of holomorphic symplectic variety $M$ of dimension $2n$. If $M$ admits a symplectic resolution, then we have the symmetry of perverse-Hodge complexes
    \begin{equation*}
        G_{i,k}\cong G_{k,i}
    \end{equation*}
    in the derived category $D^b_{coh}(B)$.
\end{thm}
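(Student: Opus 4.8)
The plan is to follow the strategy of Schnell in the smooth case \cite{schnell2023hodge}: equip the total collection $\bigoplus_{i,k}G_{i,k}$ with two compatible Hard Lefschetz structures — one in the perverse index $i$, coming from an $f$-ample class, and one in the Hodge index $k$, coming from the reflexive symplectic form $\sigma$ — and then read off the transposition $(i,k)\mapsto(k,i)$ from the symmetry group these generate. The one genuinely new ingredient, compared with the smooth case, is that the symplectic operator on the now-singular graded de Rham complexes of $\ic_M$ cannot be written down directly: it has to be produced by extension from the regular locus, and this is precisely where the existence of a symplectic resolution enters, through Theorem \ref{1.4}.

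First I would build the symplectic operator. On $M_{reg}$, Lemma \ref{smoothgrdrQ} identifies $\cC_k|_{M_{reg}}$ with $\Omega^{n+k}_{M_{reg}}$ up to shift, and wedging with $\sigma$ gives maps $\cC_k|_{M_{reg}}\to\cC_{k+2}|_{M_{reg}}$. By Theorem \ref{1.4} each $\cC_k$ is strongly m-perverse coherent on $M$, and by the extension property of the perverse-coherent $t$-structure recalled in \S 4 — an m-perverse object being determined by its restriction to the regular locus together with the codimension bounds on the supports of its cohomology sheaves — these maps extend uniquely to morphisms $\sigma\colon\cC_k\to\cC_{k+2}$ on all of $M$; Theorem \ref{introsymmetry} then says the resulting operators $\sigma^k$ are isomorphisms, i.e.\ $\cC_{-k}\xrightarrow{\ \sim\ }\cC_k$, which by the standard Lefschetz $\mathfrak{sl}_2$ formalism promotes to a primitive decomposition in the $k$-grading. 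Applying $Rf_*$ and Lemma \ref{fcommuDR},
$$
Rf_*\cC_k\ \cong\ \gr^F_{-k}\dr\bigl(Rf_*\ic_M(n)\bigr)[k-2n]\ \cong\ \bigoplus_i G_{i,k}[k-2n],
$$
the last step being $\gr^F_{-k}\dr$ of Saito's decomposition $Rf_*\ic_M(n)\cong\bigoplus_i\cP_i[-i]$. What I need from this step is a \emph{relative} symplectic Hard Lefschetz statement: that the operator induced by $\sigma$ on $Rf_*\cC_\bullet$ respects this decomposition and satisfies Hard Lefschetz summand by summand, hence gives Hard Lefschetz isomorphisms and a primitive decomposition in the $k$-grading on each $\bigoplus_k\gr^F_{-k}\dr(\cP_i)[-i]$ separately. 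This uses the m-perverse coherence of the $\cC_k$ together with a perverse-coherent analogue of the decomposition theorem to guarantee the symplectic operator descends to each perverse summand.

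Next I would bring in the topological Hard Lefschetz and assemble. Saito's relative Hard Lefschetz, Theorem \ref{Saitorelhardlef}, applied to the first Chern class $\ell$ of an $f$-ample line bundle gives $\ell^i\colon\cP_{-i}\xrightarrow{\ \sim\ }\cP_i(i)$; applying $\gr^F_{-k}\dr$ and absorbing the Tate twist $(i)$ into the Hodge index yields Hard Lefschetz isomorphisms in the $i$-direction, relating $G_{-i,k}$ with $G_{i,k\pm i}$ up to an explicit shift, again with a primitive decomposition. Since $\ell$ and $\sigma$ are cup products with classes of even degree they commute, so the two Hard Lefschetz structures are compatible; the two associated reflections of the lattice of $(i,k)$-indices generate a finite (dihedral-type) group which, as in the smooth case, contains the transposition $(i,k)\mapsto(k,i)$, and the isomorphism realizing it is the desired $G_{i,k}\cong G_{k,i}$ in $D^b_{coh}(B)$. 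Duality of graded de Rham complexes (Lemma \ref{grdrdual}) together with the self-duality of $\ic_M$ can be used along the way to fix the normalizations and index ranges; in the compact case, taking hypercohomology of this isomorphism recovers the numerical identity of Theorem \ref{intronumP=H}.

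The hard part will be the relativization step — converting the absolute symplectic Hard Lefschetz on $M$ into the fiberwise statement on each perverse summand $\cP_i$. Since Saito's decomposition $Rf_*\ic_M(n)\cong\bigoplus_i\cP_i[-i]$ is defined only up to non-canonical isomorphism, one must show the extended $\sigma$-action can be chosen to respect it (an $\cO_B$-linear operator need not be diagonal for a direct-sum decomposition, so this is not formal — it is here that m-perverse coherence and the structure of $Rf_*$ of perverse-coherent sheaves are used) and then that Hard Lefschetz survives on each graded piece; carrying this out in $D^b_{coh}(B)$ rather than merely for cohomology, and matching the $i$-dependent shifts coming from the symplectic $\mathfrak{sl}_2$ against those coming from the Tate twists in the topological Hard Lefschetz, is the delicate bookkeeping needed to make the final transposition hold on the nose rather than just up to shift or numerically.
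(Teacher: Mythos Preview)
Your overall architecture — two commuting Hard Lefschetz structures, one from $\ell$ and one from $\sigma$, generating the transposition $(i,k)\mapsto(k,i)$ — matches the paper's, and your construction of the absolute symplectic operator via extension from $M_{reg}$ using strong m-perverse coherence is exactly what the paper does in Theorem~\ref{wholeisomsigma}. The gap is precisely where you flagged it: the relativization step.

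You propose to pass from the absolute isomorphism $\sigma^k:\bigoplus_i G_{i,-k}\xrightarrow{\sim}\bigoplus_i G_{i,k}[2k]$ to the summand-by-summand isomorphism $G_{i,-k}\xrightarrow{\sim}G_{i+k,k}[2k]$ using ``m-perverse coherence together with a perverse-coherent analogue of the decomposition theorem''. No such tool is invoked in the paper, and it is unclear what it would even mean here: the $G_{i,k}$ live on $B$, not on $M$, and there is no evident perverse-coherent $t$-structure on $B$ for which the $\sigma$-action would be forced to be diagonal. Indeed the paper is careful to separate the two results: strong m-perverse coherence alone yields only the absolute symplectic Hard Lefschetz (Proposition~\ref{semiperversesymphardlef}), while the symmetry $G_{i,k}\cong G_{k,i}$ genuinely requires a \emph{global} symplectic resolution.

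The paper's actual mechanism for the relativization is completely different and uses the resolution $\pi:\tilde M\to M$ a second time, not just to establish strong m-perverse coherence. One applies Schnell's smooth relative symplectic Hard Lefschetz directly to the composite Lagrangian fibration $g=f\circ\pi:\tilde M\to B$, obtaining isomorphisms $\tilde\sigma_1^k:\tilde G_{i,-k}\xrightarrow{\sim}\tilde G_{i+k,k}[2k]$ on the perverse-Hodge complexes of $\tilde M$. The decomposition theorem for $\pi$ gives $\tilde G_{i,k}=G_{i,k}\oplus(\bigoplus_\alpha G_{\alpha,i,k})$, and the key Lemma~\ref{ictoN} (a consequence of the de~Cataldo--Migliorini description of how pullback classes act on the decomposition) shows that the $\tilde\sigma$-action is block-diagonal for the splitting $\ic_M\oplus N$. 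Restricting $\tilde\sigma_1^k$ to the $\ic_M$-block then gives the desired $\mu_{i,-k}:G_{i,-k}\xrightarrow{\sim}G_{i+k,k}[2k]$. After that the conclusion is the three-term composition $G_{i,k}\xrightarrow{\ell^{-i}}G_{-i,k-i}[-2i]\xrightarrow{\mu_{-i,k-i}}G_{-k,i-k}[-2k]\xrightarrow{\ell^k}G_{k,i}$, which is your dihedral argument made explicit.
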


\section{Geometry of symplectic varieties}
In this section we review the notion of symplectic singularities and their geometry. We consider algebraic varieties over $\CC$.
\begin{defn}[\cite{beauville1999symplectic}]\label{defsympsing}
    A holomorphic symplectic singularity is a normal variety $X$ such that there is a symplectic form $\omega$ on the regular locus $X_{reg}$ of $X$, such that for some (equivalently, any) resolution of singularities $\pi:\tilde{X}\rightarrow X$, the pullback $\pi^*\omega$ to $\pi^{-1}(X_{reg})$ extends to a holomorphic $2$-form $\tilde{\omega}$ on $\tilde{X}$.

    If moreover the extended form $\tilde{\omega}$ is non-degenerate everywhere, then we say $\pi$ is a symplectic resolution.
\end{defn} 
A related notion generalizing symplectic singularities is a Poisson structure on schemes. We refer the readers to \cite{kaledin2006symplectic} for the definition and more details.

It is shown in \cite{beauville1999symplectic,elkik1981rationalite} that a holomorphic symplectic variety is rational Gorenstein, and has klt singularities. Differential forms on the regular locus of the symplectic variety can be extended to the resolution of singularities, as shown by Namikawa \cite{namikawa2001extension}. More generally, in \cite{kebekus2021extending}, the extension problem is related to the intersection cohomology Hodge module. In fact, let $j:X_{reg}\hookrightarrow X$ be the inclusion of the regular locus and let $\pi:\tilde{X}\rightarrow X$ be a resolution of singularities, assume $\dim(X)=2n$, then we have an isomorphism (cf. \cite[\S 8.4]{kebekus2021extending})
$$\pi_*\Omega^k_{\tilde{X}}\cong \cH^{-2n+k}(\gr^F_{-k}\dr(\ic_X)).$$
This identity will play an important role in the proof of the main theorems in section \S 5.

The following useful lemma tells us that the extension of differential forms on klt spaces coincides with reflexive differentials. 
\begin{lem}\label{extensionform}
    Every holomorphic form defined on $X_{reg}$ extends uniquely to a holomorphic form on $\tilde{X}$, and we have the isomorphism 
    \begin{equation*}
        \pi_*\Omega_{\tilde{X}}^k\cong j_*\Omega_{X_{reg}}^k=\Omega_{X}^{[k]}.
    \end{equation*}
\end{lem}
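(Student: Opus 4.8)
The plan is to exhibit the natural restriction map and then show it is an isomorphism, the surjectivity being exactly the extension theorem for differential forms on symplectic (equivalently, klt) singularities. First I would fix the resolution $\pi:\tilde{X}\to X$ so that it restricts to an isomorphism over $X_{reg}$, and recall that $X\setminus X_{reg}$ has codimension at least $2$ because $X$ is normal, so that by definition $\Omega_X^{[k]}=j_*\Omega_{X_{reg}}^k$. Restriction of $k$-forms along the open immersion $\pi^{-1}(X_{reg})\xrightarrow{\ \sim\ }X_{reg}$ then gives a canonical morphism of $\cO_X$-modules $\rho\colon \pi_*\Omega_{\tilde{X}}^k\to j_*\Omega_{X_{reg}}^k$. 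Since $\tilde{X}$ is smooth and connected, $\Omega_{\tilde{X}}^k$ is locally free, hence $\pi_*\Omega_{\tilde{X}}^k$ is torsion-free; as $X$ is normal and $X_{reg}\subseteq X$ is dense, any local section of $\pi_*\Omega_{\tilde{X}}^k$ that vanishes on $X_{reg}$ vanishes identically, so $\rho$ is injective. This already gives the uniqueness part of the statement: a holomorphic form on $\tilde{X}$ is determined by its restriction to the dense open $\pi^{-1}(X_{reg})$ since $\tilde{X}$ is reduced.

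For surjectivity of $\rho$ I would invoke the extension theorem. Concretely, given an open $U\subseteq X$ and a section of $j_*\Omega_{X_{reg}}^k$ over $U$, i.e. a holomorphic $k$-form $\alpha$ on $U\cap X_{reg}$, Namikawa's extension theorem for symplectic varieties — or, since a holomorphic symplectic variety is klt as recalled above, the reflexive extension theorem of Greb–Kebekus–Kovács–Peternell — produces a genuine holomorphic $k$-form on $\pi^{-1}(U)$ restricting to $\pi^*\alpha$ on $\pi^{-1}(U\cap X_{reg})$; this is a preimage of $\alpha$ under $\rho$. (Here it is essential that the target of the extension is an honest form with no log poles, which is precisely the klt, as opposed to merely lc, case.) Hence $\rho$ is an isomorphism, and composing with the evident identification $j_*\Omega_{X_{reg}}^k=\Omega_X^{[k]}$ yields the claimed $\pi_*\Omega_{\tilde{X}}^k\cong \Omega_X^{[k]}$.

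The only substantive input here is the extension theorem itself, which is already recorded in the discussion preceding this lemma (Namikawa's theorem, and the refined statement of Kebekus–Schnell relating the extension to $\gr^F_{-k}\dr(\ic_X)$); so the \emph{main obstacle} is entirely black-boxed, and what remains is the routine normality and torsion-freeness bookkeeping above. I would conclude by noting that the resulting identification is independent of the chosen resolution: any two resolutions are dominated by a common one, and the restriction maps to $X_{reg}$ are compatible, so the canonical maps $\rho$ for different resolutions agree after the identification with $\Omega_X^{[k]}$.
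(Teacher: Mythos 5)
Your proof is correct, and it follows the same overall strategy as the paper: reduce everything to a black-boxed extension theorem for differential forms, with the normality/torsion-freeness bookkeeping supplying injectivity and the identification $j_*\Omega_{X_{reg}}^k=\Omega_X^{[k]}$. The only real difference is which extension theorem is invoked: you appeal to Namikawa's theorem for symplectic varieties or the Greb--Kebekus--Kov\'acs--Peternell klt extension theorem, whereas the paper routes through Kebekus--Schnell, deducing the unique extension from weak rationality of the singularities and then obtaining $\pi_*\Omega^k_{\tilde X}\cong j_*\Omega^k_{X_{reg}}$ from their criterion that $\pi_*\omega_X\cong\omega_X$ (which holds because symplectic varieties have rational Gorenstein singularities). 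Both inputs are available for symplectic varieties, so either citation closes the argument; the paper's choice has the advantage of staying within the Kebekus--Schnell framework that is used throughout the rest of the article (in particular the identification of $\pi_*\Omega^k_{\tilde X}$ with $\cH^{-2n+k}(\gr^F_{-k}\dr(\ic_X))$), while your version makes the klt-versus-lc distinction (no log poles) explicit, which the paper leaves implicit.
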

\begin{proof}
    Since $M$ has rational singularities, in particular it has weakly rational singularities, the first statement follows directly by \cite[Corollary 1.8]{kebekus2021extending}. By definition of rational singularities, we have $\omega_X^{GR}=\pi_*\omega_X=\omega_X$ and $\omega_X$ is reflexive. Thus we have isomorphism $\pi_*\omega_X=\omega_X\cong j_*\omega_{X_{reg}}$. By \cite[Theorem 1.4]{kebekus2021extending}, this implies 
    \begin{equation*}
        \pi_*\Omega_{\tilde{X}}^k\cong j_*\Omega_{X_{reg}}^k.
    \end{equation*}
    for all $0\le k\le 2n$. Finally, since $X$ is normal, $j_*\Omega_{X_{reg}}^k$ is the reflexive hull $\Omega_{X}^{[k]}$.
\end{proof}

A key ingredient for us is the local structure of the holomorphic symplectic variety $X$. We first recall the following stratification by the singular locus.

Canonical stratification for symplectic varieties\label{stratification}: By \cite[Proposition 3.1]{kaledin2006symplectic}, there is a canonical stratification $X_i\subset X$ by Poisson subschemes, such that the open parts $X^\circ_i$ are smooth and symplectic, by taking the singular locus successively. Namely, for $X_i\subset X$, we define $X_{i+1}$ to be the singular locus of $X_i$. For each closed point $x\in X_i^\circ$ in an open stratum, the local neighborhood of $x$ in $X$ admits a very nice product structure by the open stratum itself and a transversal slice which is a symplectic variety.

\begin{thm}[\cite{kaledin2006symplectic}, Theorem 2.3; \cite{kaplan2023productdecomp}, A.1]\label{weinsteindecomp}
    For any closed point $x\in X^\circ_i\subset X$, there is an analytic neighborhood $x\in U$ and a pointed Poisson isomorphism $(U,x)\cong (Y_x,y)\times (V,x)$ where $V$ is isomorphic to a neighborhood of $x$ in $X^\circ_i$, and $Y_x$ is a symplectic variety. We call each $Y_x$ the transversal slice in $X$ with respect to the canonical stratification.
\end{thm}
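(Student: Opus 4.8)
The plan is to follow Kaledin's strategy and then upgrade the resulting formal product decomposition to an analytic one. First I would put the Poisson structure in place. The symplectic form $\omega$ on $X_{reg}$ inverts to a bivector field $\Theta \in H^0(X_{reg}, \wedge^2 \cT_{X_{reg}})$, and since $X$ is normal one has $\wedge^2 \cT_X \cong j_*(\wedge^2 \cT_{X_{reg}})$ for $j\colon X_{reg}\hookrightarrow X$, so $\Theta$ extends to a global Poisson bivector on $X$ — the Jacobi identity is inherited from the dense open $X_{reg}$. Thus $(X,\Theta)$ is a Poisson variety, the open strata $X_i^\circ$ are smooth symplectic and the leaf through a point $x\in X_i^\circ$ is an open subset of $X_i^\circ$ (\cite[Proposition 3.1]{kaledin2006symplectic}), and the rank of $\Theta$ at $x$ equals $\dim X_i^\circ$, an even integer $2m$.

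Next I would establish the \emph{formal} version of the statement by a Weinstein-type splitting on $\widehat{X}_x = \operatorname{Spf}\widehat{\cO}_{X,x}$, by induction on $m$. There is nothing to prove when $m=0$. If $m>0$, choose $q_1\in\widehat{\cO}_{X,x}$ whose Hamiltonian vector field $\{q_1,\cdot\}$ does not vanish at $x$; the formal analogue of the classical Weinstein splitting lemma, due to Kaledin, then provides $p_1$ with $\{q_1,p_1\}=1$ near $x$ together with a decomposition of $\widehat{X}_x$ into a formal symplectic plane with coordinates $q_1,p_1$ times the formal Poisson subscheme $\{q_1=p_1=0\}$, on which $\Theta$ has rank $2m-2$ at $x$. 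After $m$ steps one obtains a formal Poisson isomorphism $\widehat{X}_x \cong \widehat{(X_i^\circ)}_x \times \widehat{R}$, where $\widehat{(X_i^\circ)}_x$ is the $2m$-dimensional formal symplectic polydisk spanned by the coordinates produced by the induction — canonically the formal completion of the leaf through $x$ — and $\widehat{R}$ is a formal Poisson scheme whose bivector vanishes at the origin.

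Finally I would algebraize this into the desired pointed analytic statement; this is the substance of \cite[A.1]{kaplan2023productdecomp}. Here one invokes M. Artin's approximation theorem for analytic equations: the datum of a Poisson isomorphism of $(X,x)$ with a product is cut out by an analytic system that admits a formal solution by the previous step, hence admits an analytic one, producing a pointed analytic Poisson isomorphism $(U,x)\cong (V,x)\times (Y_x,y)$ with $V$ an analytic neighborhood of $x$ in $X_i^\circ$ and $Y_x$ the analytic germ cut out by the transversal coordinates. It then remains to verify that $Y_x$ is a symplectic variety: it is normal because $X\cong V\times Y_x$ analytically near $x$ with $V$ smooth; the residual bivector is nondegenerate on $(Y_x)_{reg}$ by the dimension count supplied by this product decomposition, so $(Y_x)_{reg}$ carries a symplectic form and $\dim Y_x=\dim X-2m$ is even; and, taking a functorial resolution $\widetilde{Y_x}\to Y_x$ and using that a resolution of a product with a smooth factor splits off that factor, the pullback of the symplectic form extends over $\widetilde{Y_x}$ because the symplectic form of $X$ extends over a resolution of $X$ — equivalently, the reflexive-differential statement of Lemma \ref{extensionform}.

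The step I expect to be the main obstacle is this last one. The formal Weinstein splitting is essentially classical once the Poisson structure is in hand; the genuine work lies in (a) checking that the transversal slice is a bona fide symplectic \emph{variety}, with normality and the extension-of-forms property rather than merely a formal or analytic germ, and (b) realizing the product decomposition by an honest analytic isomorphism of neighborhoods rather than only formally — this is precisely where Artin approximation and the bookkeeping carried out in the appendix of \cite{kaplan2023productdecomp} come in.
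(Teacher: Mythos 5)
The paper offers no proof of this statement---it is imported directly from \cite{kaledin2006symplectic} (Theorem 2.3, the formal version) together with the appendix of \cite{kaplan2023productdecomp} (the upgrade from formal to analytic/\'etale neighborhoods)---and your sketch reconstructs precisely that two-step strategy: extend the Poisson bivector by reflexivity, perform Kaledin's formal Weinstein splitting at a point of an open stratum, then apply Artin approximation to obtain an honest pointed analytic product decomposition and verify that the transversal factor is a genuine symplectic variety. This matches the intended argument of the cited sources, so nothing further is needed.
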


Kaledin's result in \cite{kaledin2006symplectic} is stated in terms of formal neighborhood, the proof in the appendix of \cite{kaplan2023productdecomp} allows us to replace formal neighborhood by analytic (\'etale) neighborhoods.

We have defined the notion of symplectic resolutions in Definition \ref{defsympsing}. For the purpose of induction in the proof of main theorems in \S 5, we show following that the existence of the symplectic resolution is compatible with the local structure.

\begin{prop}\label{propslicehassymplecticresolution}
    Suppose that $X$ is a symplectic variety with symplectic resolution $\pi:\tilde{X}\rightarrow X$. Let $x\in X^\circ_i\subset X$ be a closed point on the stratum $X^\circ_i$, suppose that $x$ has an \'etale neighborhood $(U,x)\cong (Y_x,y)\times (V,x)$ as in Theorem \ref{weinsteindecomp}. Then the symplectic variety $Y_x$ admits a symplectic resolution.
\end{prop}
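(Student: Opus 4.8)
The plan is to produce a symplectic resolution of $Y_x$ by restricting the given one, $\pi$, to a slice through $x$ transverse to the stratum $X_i^{\circ}$. Since being a symplectic resolution is a condition local in the analytic (\'etale) topology on the base, pulling $\pi$ back over $U$ yields a symplectic resolution $\pi_U\colon W:=\pi^{-1}(U)\to U\cong Y_x\times V$; write $\sigma_W$ for the closed, everywhere non-degenerate $2$-form on the smooth variety $W$ extending $\pi_U^{*}\omega$. Three facts will be used repeatedly: via $U_{reg}\cong (Y_x)_{reg}\times V$ the form $\omega$ is the product of $\sigma_{Y_x}$ on $(Y_x)_{reg}$ and the symplectic form $\sigma_V$ on $V$; the map $\pi_U$ is an isomorphism over $U_{reg}$; and $\pi_U$ is a morphism of Poisson varieties (it is a symplectic isomorphism over the dense open $\pi^{-1}(U_{reg})$, so $\{\pi_U^{*}f,\pi_U^{*}g\}=\pi_U^{*}\{f,g\}$ holds there, hence on all of $W$).

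The heart of the proof is a submersion statement. I would choose holomorphic Darboux coordinates $p_1,\dots,p_m,q_1,\dots,q_m$ on $V$ centered at $x$, so that $\{p_i,q_j\}=\delta_{ij}$ and $\{p_i,p_j\}=\{q_i,q_j\}=0$, where $2m=\dim V$, and pull them back along $W\xrightarrow{\pi_U}Y_x\times V\to V$ to functions $\tilde p_i,\tilde q_i$ on $W$. Because $W$ is smooth and $\sigma_W$ non-degenerate, each $\tilde p_i,\tilde q_i$ has a Hamiltonian vector field, and the brackets of these functions are the \emph{constant} pullbacks of the Darboux brackets: $\{\tilde p_i,\tilde q_j\}=\delta_{ij}$, $\{\tilde p_i,\tilde p_j\}=\{\tilde q_i,\tilde q_j\}=0$ on all of $W$. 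Hence at every point of $W$ the vector fields $X_{\tilde p_i},X_{\tilde q_i}$ span a rank-$2m$ subbundle $E\subset TW$ on which $\sigma_W$ restricts to the standard (hence invertible) symplectic form, and dually the $1$-forms $d\tilde p_1,\dots,d\tilde q_m$ are linearly independent at every point of $W$. Therefore $\Phi:=(\tilde p_1,\dots,\tilde q_m)\colon W\to\CC^{2m}$ is a submersion.

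Next I would set $Z:=\Phi^{-1}(0)$, which equals $\pi_U^{-1}(Y_x\times\{x\})$ since $\Phi$ is $\pi_U$ followed by the coordinate projection cutting out $\{x\}\subset V$. As the zero fibre of a submersion, $Z$ is smooth of dimension $\dim W-2m=\dim Y_x$, and $\pi_U|_Z\colon Z\to Y_x\times\{x\}\cong Y_x$ is proper and an isomorphism over $(Y_x)_{reg}$ (the singular locus of $Y_x\times V$ being the product of that of $Y_x$ with $V$, over which $\pi_U$ is an isomorphism); after replacing $Z$ by the connected component meeting $(\pi_U|_Z)^{-1}((Y_x)_{reg})$ if necessary, Zariski's main theorem shows $\pi_U|_Z$ is a resolution of singularities of $Y_x$. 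For the symplectic form: since $E$ is a symplectic subbundle of $(TW,\sigma_W)$, its orthogonal complement $E^{\perp}$ is symplectic as well, and $TZ=\ker(d\Phi)|_Z=E^{\perp}|_Z$ (the kernel of $d\Phi$ is the annihilator of the $d\tilde p_i,d\tilde q_i$, i.e. the $\sigma_W$-orthogonal of $E$). Hence $\sigma_Z:=\iota_Z^{*}\sigma_W$, for $\iota_Z\colon Z\hookrightarrow W$ the inclusion, is a closed, everywhere non-degenerate $2$-form on $Z$; and over $(Y_x)_{reg}$, where $\sigma_W$ is the product form and $TZ=T(Y_x)_{reg}\oplus 0$, it restricts to $\sigma_{Y_x}$. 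Thus $\pi_U|_Z\colon (Z,\sigma_Z)\to Y_x$ is the desired symplectic resolution.

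The main difficulty will be the smoothness of the slice $Z$, i.e. the submersion statement of the second paragraph, equivalently that the pulled-back Darboux functions form part of a holomorphic coordinate system on $W$. This is exactly where one exploits that $\sigma_W$ is non-degenerate on \emph{all} of $W$, not merely over $U_{reg}$: non-degeneracy over the exceptional locus of $\pi$ is what keeps the Hamiltonian vector fields $X_{\tilde p_i},X_{\tilde q_i}$ linearly independent there, which would fail for a general resolution. A minor extra point is the connectedness of $Z$ together with the (harmless) distinction between $Y_x$ and a suitable small representative of it. Alternatively, the proposition also follows from the local triviality of symplectic resolutions along the symplectic strata of $X$ — in the spirit of Kaledin's decomposition theorem — which provides directly a product decomposition $\pi^{-1}(U)\cong\widetilde{Y}_x\times V$ with $\widetilde{Y}_x\to Y_x$ a symplectic resolution.
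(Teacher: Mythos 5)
Your argument is correct, but it takes a genuinely different route from the paper's. The paper slices over a \emph{generic} nearby point: by generic smoothness there is some $v\in V$ with $\tilde Y_x:=\pi^{-1}(Y_x\times\{v\})$ smooth, and the resolution $\tilde Y_x\to Y_x$ is then shown to be symplectic by checking crepancy via adjunction ($\det\mathcal{N}_{\tilde Y_x/\tilde U}\cong\cO_{\tilde Y_x}$ because $\tilde U\to V$ is smooth) combined with Fu's criterion that a resolution of a symplectic variety is symplectic if and only if it is crepant. You instead slice at the \emph{central} point $x$ and exploit the symplectic geometry of the resolution directly: the constancy of the Poisson brackets of the pulled-back Darboux functions, together with the non-degeneracy of $\sigma_W$ on all of $W$, forces their differentials to be everywhere independent, so $\pi_U^{-1}(Y_x\times\{x\})$ is automatically smooth (a symplectic reduction), and the symplectic form on the slice is produced as the restriction of $\sigma_W$ to the symplectic orthogonal of $E$. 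What your approach buys: it avoids Fu's theorem and generic smoothness, works over the prescribed point $x$ rather than an unspecified nearby $v$, and in effect establishes the local triviality of $\pi$ along the stratum. What the paper's approach buys: it is shorter and purely algebro-geometric, with no Darboux/Poisson bookkeeping (in particular no need for the density argument showing $\pi_U$ is Poisson). Two points you should make explicit: $V$ must be shrunk so that holomorphic Darboux coordinates centered at $x$ exist, and the birationality of $\pi_U|_Z$ over $(Y_x)_{reg}$ uses the standard fact that a symplectic (crepant) resolution is an isomorphism over the regular locus.
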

\begin{proof}
    The restriction $\pi\vert_U: \tilde{U}:=\pi^{-1}(U)\rightarrow U$ is also a symplectic resolution. By generic smoothness, we can shrink $V$ so that the composition $p_2\circ \pi:\tilde{U}\rightarrow Y_x\times V\rightarrow V$ \cite[Corollary 10.7]{hartshorne2013algebraic} is smooth. In particular, we can find a closed point $v\in V$ so that $\tilde{Y_x}:=\pi^{-1}(Y_x\times \{v\})=(p_1\circ \pi)^{-1}(v)$ is smooth. This gives the following cartesian diagram (we use the symbols repetitively as there is no confusion)
    $$\begin{tikzcd}
\tilde{Y_x} \arrow[r, "\iota", hook] \arrow[d, "\pi"] & \tilde{U} \arrow[d, "\pi"] \\
Y_x\cong Y_x\times\{v\} \arrow[r, "\iota", hook]      & U\cong Y_x\times V     
\end{tikzcd}.$$
    We next show that $\pi:\tilde{Y_x}\rightarrow Y_x$ is a symplectic resolution. By \cite[Proposition 1.1]{fu2003symplectic}, a resolution is a symplectic resolution if and only if it's a crepant resolution. It suffices to show that $\pi^*(K_{Y_x})\cong K_{\tilde{Y_x}}$.
    Suppose that $Y_x\cong Y_x\times \{v\}\subset U$ is defined by the ideal sheaf $\cI_{Y_x}\subset \cO_U$, then $\tilde{Y}_x\subset \tilde{U}$ is defined by $\pi^*\cI_{Y_x}$. 
    Since $\pi:\tilde{U}\rightarrow U$ is a symplectic (crepant) resolution, we have $\pi^*(K_U)= K_{\tilde{U}}$. Thus $$\iota^*K_{\tilde{U}}=\iota^*\pi^*(K_U)=\pi^*\iota^*(K_U)=\pi^*K_{Y_x}$$
    
    Since $\tilde{Y_x}$ and $\tilde{U}$ are smooth, by adjunction we have
    $$K_{\tilde{Y_x}}=\iota^*K_{\tilde{U}}\otimes \operatorname{det}\cN_{\tilde{Y_x}/\tilde{U}}=\pi^*K_{Y_x}\otimes \operatorname{det}\cN_{\tilde{Y_x}/\tilde{U}}.$$
    Since $p_2\circ \pi:\tilde{U}\rightarrow V$ is smooth, by \cite[0473]{stacks-project} we have $\operatorname{det}\cN_{\tilde{Y_x}/\tilde{U}}=\pi^*\operatorname{det}(\cN_{\{v\}/V})\cong \cO_{\tilde{Y_x}}$.
    Thus $K_{\tilde{Y_x}}=\pi^*K_{Y_x}$, this shows that $\tilde{Y_x}$ is a symplectic (crepant) resolution for $Y_x$.
\end{proof}

\section{Perverse coherent t-structures}
We review the notion of perverse coherent sheaves. Perverse coherent sheaves can be viewed as the commutative algebraic counter-part of perverse sheaves from the topological perspective. The perverse coherent t-structure corresponds to the perverse t-structure on the derived category of constructible complexes via Riemann-Hilbert correspondence, and corresponds to the usual t-structure on the derived category of coherent sheaves via Grothendieck duality. We refer the readers to \cite{kashiwara302086t,arinkin2009perverse,bhatt2023applications} for more detailed discussions.

\subsection{Perverse coherent t-structures}
In this section we consider $X$ a finite type separated scheme defined over field $k$ which admits a dualizing complex. Here we choose $\omega^\bullet_X$ to be the normalized dualizing complex in the sense of \cite[\href{https://stacks.math.columbia.edu/tag/0A7M}{Tag OA7M}]{stacks-project}. 

\begin{defn}
    A perversity of $X$ is a function $p: X\rightarrow \ZZ$ on the underlying topological space of the scheme $X$, such that $p$ is non-decreasing under specialization, i.e, $p(y)\ge p(x)$ if $y\in \overline{\{x\}}$.
\end{defn}

Using the results in \cite{kashiwara302086t,arinkin2009perverse}, we can define two families of subcategories,
    \begin{equation*}
        \pervD^{\le k}_{coh}(X)=\{K\in D^b_{coh}(X)\mid i^*_xK\in D^{\le k+p(x)}_{coh}(\kappa(x)) \  \text{for all $x\in X$}\}
    \end{equation*}
    \begin{equation*}
        \pervD^{\ge k}_{coh}(X)=\{K\in D^b_{coh}(X)\mid i^{!}_xK\in D^{\ge k+p(x)}_{coh}(\kappa(x)) \  \text{for all $x\in X$}\}.
    \end{equation*}
It's shown that they form a bounded t-structure on $\Dbcoh(X)$ if and only if 
$$p(y)-p(x)\le \cd(y)-\cd(x)$$
for every $x,y\in X$ such that $y\in \overline{\{x\}}$. We write the heart as 
$$\Perv^p_{coh}(X)=\pervD^{\ge 0}_{coh}(X)\cap \pervD^{\le 0}_{coh}(X).$$

\begin{defn}[\cite{kebekus2021extending}, Lemma 18.4]
    Define perversity $m:X\rightarrow \ZZ$ with
    \begin{equation*}
    m(x)=\lfloor \frac{1}{2}\cd(x)\rfloor,
    \end{equation*}
    and its dual function
    \begin{equation*}
    \hat{m}(x)=\lceil \frac{1}{2}\cd(x)\rceil.
    \end{equation*}
    They define perverse t-structures
    \begin{equation*}
        \mervD^{\le k}_{coh}(X)=\{K\in D^b_{coh}(X)\mid \cd\supp \cH^i(K)\ge 2(i-k) \  \text{for all $i\in \ZZ$}\}
    \end{equation*}
    \begin{equation*}
        \mDtcoh{\hat{m}}{\geq k}(X)=\{K\in D^b_{coh}(X)\mid \cd\supp \cH^i(K)\ge 2(i-k)-1 \  \text{for all $i\in \ZZ$} \}
    \end{equation*}
    This also describes the subcategories with $\ge k$ by duality.
\end{defn}

For our convenience we introduce the following definition,
\begin{defn}\label{weaklyperverse}
    Let $X$ be a complex algebraic variety of dimension $2n$ which admits a dualizing complex. 
    \begin{enumerate}
        \item We say the intersection complex Hodge module $\ic_X$ is \textbf{coherent m-perverse}, if it satisfies the following support condition
    $$\cd\supp \cH^i(\gr^F_{-k}\dr(\ic_X)[k-2n])\ge 2i$$
    for all $i$ and $k$.
    \item We say the intersection complex Hodge module $\ic_X$ is \textbf{strongly coherent m-perverse}, if it satisfies the following support condition
    $$\cd\supp \cH^i(\gr^F_{-k}\dr(\ic_X)[k-2n])\ge 2(i+1)$$
    for all $i\ge 1$ and $k$.
    \end{enumerate}
\end{defn}

Let $\tau_{\le k}$ and $\tau_{\ge k}$ be the truncation functors with respect to the standard t-structure on $D^b_{coh}(X)$, and $$\mathbb{D}(-)=R\cH om(-,\omega^\bullet_X[-n]):D^b_{coh}(X)\rightarrow D^b_{coh}(X)^{op}$$ be the Grothendieck-duality functor, where $n$ is the dimension of $X$.

We also need the perverse function $p(x)=n-\dim\overline{\{x\}}$. Following from local duality, the next lemma shows that the $p$-perverse t-structure on $D^b_{coh}(X)$ is the Grothendieck-dual of the usual t-structure on $D^b_{coh}(X)$. 
\begin{lem}[\cite{arinkin2009perverse}, Lemma 3.3]\label{pervcohdual}
    Suppose $X$ is equidimensional and $K\in D^b_{coh}(X)$, then 
    \begin{enumerate}
        \item $K\in \pervD^{\le 0}(X)$ if and only if $\DD(K)\in D^{\ge 0}$.
        \item $K\in \pervD^{\ge 0}(X)$ if and only if $\DD(K)\in D^{\le 0}$.
    \end{enumerate}
    In particular, $K\in \Perv_{coh}^p(X)$ if and only if $\DD(K)$ is concentrated in degree $0$.
\end{lem}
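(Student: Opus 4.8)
The plan is to deduce both statements from local (Grothendieck) duality, after first identifying the standard t-structure on $D^b_{coh}(X)$ with the perverse coherent t-structure attached to the zero perversity.

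First I would record the local duality input. Writing $i_x\colon\operatorname{Spec}\kappa(x)\hookrightarrow X$ for the inclusion of a point $x$ (the composite of a localization and a closed immersion), $i_x^!\omega_X^\bullet$ is a dualizing complex for the field $\kappa(x)$, hence of the form $\kappa(x)[m_x]$, and the normalization of $\omega_X^\bullet$ forces $m_x=\dim\overline{\{x\}}$. Local duality then furnishes, functorially in $K\in D^b_{coh}(X)$, an isomorphism
\begin{equation*}
  i_x^!\,\DD(K)\;\cong\;\DD_{\kappa(x)}\!\bigl(i_x^*K\bigr)\bigl[\dim\overline{\{x\}}-n\bigr],
\end{equation*}
where $\DD_{\kappa(x)}(-)=R\cH om_{\kappa(x)}(-,\kappa(x))$ is duality over the residue field and $n=\dim X$; dualizing this (using $\DD^2\simeq\operatorname{id}$ and $\DD_{\kappa(x)}^2\simeq\operatorname{id}$) gives the companion formula $i_x^*\,\DD(K)\cong\DD_{\kappa(x)}(i_x^!K)[\dim\overline{\{x\}}-n]$. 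Since $X$ is equidimensional, $n-\dim\overline{\{x\}}$ is exactly the perversity value $p(x)=\cd(x)$ appearing in the statement.

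Next I would check that the standard t-structure is the perverse coherent t-structure for the zero perversity: $D^{\le 0}_{coh}(X)=\{K:i_x^*K\in D^{\le 0}_{coh}(\kappa(x))\text{ for all }x\}$ — one inclusion is right t-exactness of $Li_x^*$, the reverse follows by passing to a generic point of the support of the top nonvanishing cohomology sheaf of $K$ and invoking Nakayama — and correspondingly $D^{\ge 0}_{coh}(X)=\{K:i_x^!K\in D^{\ge 0}_{coh}(\kappa(x))\text{ for all }x\}$, since the co-aisle of a t-structure is determined by its aisle. Then part (1) follows by unwinding definitions: $K\in\pervD^{\le 0}(X)$ says $i_x^*K\in D^{\le p(x)}_{coh}(\kappa(x))$ for every $x$; over the field $\kappa(x)$ this is equivalent to $\DD_{\kappa(x)}(i_x^*K)\in D^{\ge -p(x)}$, hence to $\DD_{\kappa(x)}(i_x^*K)[\dim\overline{\{x\}}-n]\in D^{\ge 0}$, i.e.\ to $i_x^!\DD(K)\in D^{\ge 0}_{coh}(\kappa(x))$; by the previous step this holds for all $x$ precisely when $\DD(K)\in D^{\ge 0}_{coh}(X)$. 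For part (2) I would use that the contravariant involution $\DD$ carries a t-structure to a t-structure, so $\bigl(\DD(D^{\ge 0}_{coh}),\DD(D^{\le 0}_{coh})\bigr)$ is one; by part (1) its aisle $\DD(D^{\ge 0}_{coh})$ coincides with $\pervD^{\le 0}(X)$, and since a t-structure is determined by its aisle this forces $\pervD^{\ge 0}(X)=\DD(D^{\le 0}_{coh})$, which is (2). (Alternatively, (2) can be obtained by the computation symmetric to the one above, using the companion duality formula and the elementary half of the preceding description of $D^{\le 0}_{coh}$.) The final assertion is then immediate, since $K\in\Perv_{coh}^p(X)$ iff $\DD(K)\in D^{\ge 0}_{coh}\cap D^{\le 0}_{coh}$, i.e.\ iff $\DD(K)$ is a sheaf placed in degree $0$.

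The step requiring the most care is pinning down the shift in the local duality isomorphism — keeping $\dim\overline{\{x\}}$, the codimension $\cd(x)$, the perversity $p(x)$, and the twist $[-n]$ built into $\DD$ mutually consistent — together with the standard but not purely formal identification of $D^{\le 0}_{coh}(X)$ with the zero-perversity aisle; once these are in hand, the rest is formal manipulation with t-structures and duality.
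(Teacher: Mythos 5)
Your proof is correct. The paper itself offers no argument for this lemma --- it is quoted directly from Arinkin--Bezrukavnikov --- and your local-duality computation (the shift bookkeeping $i_x^!\omega_X^\bullet\cong\kappa(x)[\dim\overline{\{x\}}]$ against the twist $[-n]$ in $\mathbb{D}$, the identification of the zero-perversity t-structure with the standard one, and the formal dualization over each residue field) is essentially the standard proof given in that reference. The one step worth making fully explicit is the description of the standard co-aisle as $\{K\mid i_x^!K\in D^{\ge 0}\text{ for all }x\}$: your appeal to ``the co-aisle is determined by the aisle'' tacitly uses that the pointwise conditions already form a t-structure (the Arinkin--Bezrukavnikov/Kashiwara criterion applied to the zero perversity, which the paper does quote), though it can equally be verified directly via depth bounds and associated points.
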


\subsection{Intersection complexes of coherent sheaves}
\begin{defn}[\cite{schnell2015holonomicDmodabvar}, Definition 23.1]
    Let $\cF$ be a reflexive coherent sheaf. We call the  complex 
    \begin{equation*}
        \cIC(\cF)=(\tau_{\le\ell(X)-1}\circ \DD\circ \tau_{\le \ell(X)-2}\circ \cdots\circ \DD\circ \tau_{\le 1}\circ \DD)\cF
    \end{equation*}
    the \textbf{intersection complex} of $\cF$. Here $\ell(X)$ is the smallest odd integer such that $2\ell+1\ge \dim X$.
\end{defn}
\begin{defn}[\cite{schnell2015holonomicDmodabvar}, Definition 23.8]
    Let $j:U\hookrightarrow X$ be an open subset with $\cd(X\setminus U)\ge 2$, and $\cE$ be a locally free sheaf on $U$. Then we call the complex
    \begin{equation*}
        \cIC_X(\cE)=\cIC(j_*\cE)
    \end{equation*}
    the \textbf{intersection complex} of $\cE$ (with respect to $X$).
\end{defn}

\begin{prop}\label{reconstructionProp4.7}
    Let $\cC\in D^{\ge 0}_{coh}(X)$ be a bounded complex of coherent sheaves. Assume that there exists an open subset $j:U\hookrightarrow X$ with $\cd(X\setminus U)\ge 2$, such that $j^*\cH^0\cC$ is locally free. Suppose we have following support conditions
    $$\cd\supp\cH^j\cC\ge 2j+2\quad \text{and} \quad \cd\supp\cH^j\DD(\cC)\ge 2j+2$$
    for $j\ge 1$, then $\cC=\cIC_X(j^*\cH^0\cC)$, which is uniquely determined by the restriction $j^*\cH^0\cC$. 
\end{prop}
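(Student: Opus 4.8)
\subsection*{Proof proposal}

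The plan is to identify $\cC$ with the intermediate extension of the locally free sheaf $\cE:=j^*\cH^0\cC$ along $j$, for the $m$-perverse coherent $t$-structure of \S4. Write $\cF:=j_*\cE$; by \cite{schnell2015holonomicDmodabvar}, \S23 (together with the perverse-coherent formalism of \cite{arinkin2009perverse, kashiwara302086t}) the iterated truncation/duality formula defining $\cIC_X(\cE)=\cIC(\cF)$ produces precisely the minimal extension $j_{!*}\cE$ of $\cE$ — a locally free sheaf in degree $0$, hence an object of $\Perv^m_{coh}(U)$ — to $X$. Such an extension is characterized, and uniquely determined, among objects $\cK\in D^b_{coh}(X)$ by the three properties: (i) $\cK\in\Perv^m_{coh}(X)$; (ii) $j^*\cK\cong\cE[0]$; (iii) $\cK$ has no nonzero subobject and no nonzero quotient object, in the heart $\Perv^m_{coh}(X)$, supported on $Z:=X\setminus U$. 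It therefore suffices to verify (i)--(iii) for $\cC$; uniqueness of $j_{!*}\cE$ then yields $\cC\cong j_{!*}\cE=\cIC_X(\cE)$, and the latter manifestly depends only on $\cE=j^*\cH^0\cC$.

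For (i): the bound $\cd\supp\cH^j\cC\ge 2j+2$ for $j\ge1$, together with $\cC\in D^{\ge0}_{coh}(X)$, places $\cC$ in $\mervD^{\le0}_{coh}(X)$; and since $\cd\supp\cH^j\DD\cC\ge 2j+2\ge 2j-1$ for $j\ge1$ while $\cd\supp\cH^j\DD\cC\ge 2j-1$ is automatic for $j\le0$, we get $\DD\cC\in\hmervD^{\le0}_{coh}(X)$, whence $\cC\in\mervD^{\ge0}_{coh}(X)$ because $\DD$ interchanges the $m$- and $\hat m$-perverse $t$-structures (the perversities $m$ and $\hat m$ being dual; cf. Lemma \ref{pervcohdual}). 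Thus $\cC\in\Perv^m_{coh}(X)$. For (iii): the strict thresholds $\cd\supp\cH^j\ge 2j+2$, in place of the thresholds $2j$ built into $\mervD^{\le0}_{coh}$, forbid exactly a nonzero quotient of $\cC$ in the heart supported on $Z$, while the matching strict thresholds for $\DD\cC$ forbid a nonzero subobject supported on $Z$ (as $\DD$ exchanges subobjects and quotients and $\cd(Z)\ge2$). A Grothendieck-duality spectral sequence $\mathcal{E}xt^p(\cH^{-q}\cC,\omega^\bullet_X[-n])\Rightarrow\cH^{p+q}\DD\cC$ moreover gives $\cH^0\DD\cC=\cH^0\DD(\cH^0\cC)$; applying this also to $\DD\cC$ and using biduality yields $\cH^0\cC\cong(\cH^0\cC)^{\vee\vee}$, so $\cH^0\cC$ is reflexive and $\cH^0\cC=j_*(j^*\cH^0\cC)=\cF$.

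For (ii), I would consider the canonical map $\iota\colon\cE[0]=\tau_{\le0}(j^*\cC)\to j^*\cC$; both sides lie in $\Perv^m_{coh}(U)$ by (i) and the $t$-exactness of open restriction, so $\cK:=\ker\iota$ lies in the heart and satisfies $\cH^i\cK\cong j^*\cH^{i-1}\cC$ for $i\ge2$, $\cK\in D^{\ge2}_{coh}(U)$. Dualizing the short exact sequence $0\to\cK\to\cE[0]\to j^*\cC\to0$ gives $\cH^m\DD_U\cK\cong j^*\cH^{m+1}\DD\cC$ for $m\ge1$, so $\cH^i\cK$ is supported in codimension $\ge 2i$ and $\cH^m\DD_U\cK$ in codimension $\ge 2m+4$. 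Feeding these bounds through Grothendieck duality and biduality, one checks that $\cK\in D^{\ge N}$ forces $\DD_U\cK\in D^{\ge N}$ and hence $\cK=\DD_U\DD_U\cK\in D^{\ge N+4}$; since $\cK$ is bounded this gives $\cK=0$, i.e. $\iota$ is an isomorphism. The main obstacle is precisely this amplitude bootstrap — the one place where the two support hypotheses (on $\cC$ and on $\DD\cC$) are genuinely played against one another — together with the bookkeeping in (iii) matching the strict bounds to the absence of sub/quotients on $Z$ and correctly tracking the two perversities $m\ne\hat m$ (which disagree at odd-codimension points) and the index $\ell(X)$. A secondary subtlety: the duality computations above are clean when $X$, hence $U$, is Cohen--Macaulay — the situation of the applications in \S5 — but require extra care along a non-Cohen--Macaulay locus in general.
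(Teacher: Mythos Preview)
Your approach is genuinely different from the paper's, and as written it has two gaps.

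The paper never touches the $m$-perverse $t$-structure in this proof. It argues directly with the iterated-truncation definition of $\cIC$: from the support bound $\cd\supp\cH^j\cC\ge 2j+2$ for $j\ge n+1$ one gets $\tau_{\ge n+1}\cC\in{}^pD^{\le -(n+3)}$ for the perversity $p(x)=\cd(x)$ (the one dual to the standard $t$-structure, Lemma~\ref{pervcohdual}), hence $\DD(\tau_{\ge n+1}\cC)\in D^{\ge n+3}$, and therefore $\tau_{\le n+1}\DD\cC\simeq\tau_{\le n+1}\DD(\tau_{\le n}\cC)$. Swapping the roles of $\cC$ and $\DD\cC$ gives $\tau_{\le n+2}\cC\simeq\tau_{\le n+2}\DD\tau_{\le n+1}\DD(\tau_{\le n}\cC)$; iterating down to $n=0$ and using $\tau_{\le 0}\cC=\cH^0\cC=j_*j^*\cH^0\cC$ (reflexivity) reproduces the defining formula of $\cIC_X(\cE)$ on the nose. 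No intermediate-extension characterisation is invoked.

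Your route needs, as a first step, that the object $\cIC_X(\cE)$ \emph{defined} by the iterated $\tau_{\le k}\circ\DD$ formula coincides with the $m$-intermediate extension $j_{!*}^m\cE$. That identification is not in \cite{schnell2015holonomicDmodabvar}~\S23 or in \cite{arinkin2009perverse}; the iterated formula is built from standard truncations, which correspond (via Lemma~\ref{pervcohdual}) to the perversity $p(x)=\cd(x)$, not to $m$. Moreover $m$ is not strictly monotone, so the clean sub/quotient characterisation of $j_{!*}$ you rely on does not come for free. Even if you verify (i)--(iii) for $\cC$, you have only shown $\cC\simeq j_{!*}^m\cE$, not $\cC\simeq\cIC_X(\cE)$ as defined in the paper; closing that gap is essentially the content of the proposition itself.

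Separately, your argument for (ii) is garbled. The hypotheses do \emph{not} force $j^*\cC\simeq\cE[0]$: the sheaves $\cH^i\cC$ for $i\ge1$ are supported in codimension $\ge4$, but $X\setminus U$ is only required to have codimension $\ge2$, so these supports can meet $U$. Your short exact sequence $0\to\cK\to\cE[0]\to j^*\cC\to 0$ has the arrows the wrong way around (the canonical map $\tau_{\le0}(j^*\cC)\to j^*\cC$ has cone $\tau_{\ge1}(j^*\cC)$, so it is a monomorphism in the heart, not an epimorphism), and the subsequent ``bootstrap'' does not go through as written. One can salvage (ii) by first shrinking $U$ past all the higher supports of $\cC$---harmless since you have already shown $\cH^0\cC$ is reflexive, so $j_*\cE$ is unchanged---but the first gap above remains.
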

\begin{proof}
    We follow the same strategy as in \cite[Proposition 22.1]{schnell2015holonomicDmodabvar}. We show by induction that $\tau_{\le n}\cC$ can be reconstructed starting from $\cH^0\cC$ for each $n\ge 0$. The $n=0$ case is clear as $\tau_{\le 0}=\cH^0\cC$.

    For $j\ge n+1$, we have $\cd\supp\cH^j \cC\ge 2j+2\ge j+n+3$. Thus $\tau_{\ge n+1}\cC\in \pervD_{coh}^{\le -(n+3)}$. Consider the distinguished triangle,
    \begin{equation*}
        \tau_{\le n}\cC\rightarrow\cC\rightarrow\tau_{\ge n+1}\cC\xrightarrow{+1}\tau_{\le n}\cC[1]
    \end{equation*}
    
    Taking the Grothendieck dual, we see from Lemma \ref{pervcohdual} that $\DD(\tau_{\ge n+1}\cC)\in D^{\ge n+3}$. Therefore,
    \begin{equation*}
        \tau_{\le n+1}\DD(\cC)\simeq \tau_{\le n+1}\DD(\tau_{\le n}\cC).
    \end{equation*}
    
    Repeat the above argument but replace $\cC$ with $\DD(\cC)$, we have
    \begin{equation*}
    \tau_{\le n+2}\cC\simeq \tau_{\le n+2}\DD(\tau_{\le n+1}\DD(\cC))\simeq  \tau_{\le n+2}\DD(\tau_{\le n+1}\DD(\tau_{\le n}\cC)).
    \end{equation*}

    Repeat the above argument inductively until we reach $n=0$, in which case $\tau_{\le 0}\cC\simeq j_*j^*\cH^0\cC$ since $j^*\cH^0\cC$ is locally free in codimension $2$. Therefore we conclude that $\cC=\cIC_X(j^*\cH^0\cC)$.
\end{proof}

In particular, the proposition above applies when $\ic_X$ is strongly coherent m-perverse, as shown in the following important corollary.
\begin{cor}\label{reconstructionwhenicsemiperverse}
    Let $X$ be a complex algebraic variety which admits a dualizing complex. Suppose that $\ic_X$ is strongly coherent m-perverse, then 
    $$\gr_{-k}^F\dr(\ic_X(n))[k-n]=\cIC_X(\Omega^{n+k}_{X_{reg}}).$$
    In particular, the complex $\gr^F_{-k}\dr(\ic_X(n))$ is uniquely determined by its restrction to the smooth locus $j^*\cH^{k-n}\gr^F_{-k}\dr(\ic_X(n))=\Omega^{n+k}_{X_{reg}}$.
\end{cor}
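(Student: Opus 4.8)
The plan is to reduce Corollary \ref{reconstructionwhenicsemiperverse} to Proposition \ref{reconstructionProp4.7}, applied to the complex $\cC := \gr^F_{-k}\dr(\ic_X(n))[k-n]$, which by our running notation is $\cC_{k}$ up to a shift; so I must verify the three hypotheses of that proposition: (i) $\cC \in D^{\ge 0}_{coh}(X)$, (ii) there is an open $j:U \hookrightarrow X$ with $\cd(X \setminus U) \ge 2$ on which $\cH^0\cC$ is locally free, and (iii) the support bounds $\cd\supp \cH^j\cC \ge 2j+2$ and $\cd\supp\cH^j\DD(\cC) \ge 2j+2$ hold for $j \ge 1$. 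Granting these, Proposition \ref{reconstructionProp4.7} immediately gives $\cC = \cIC_X(j^*\cH^0\cC)$, and then I only need to identify the restriction $j^*\cH^0\cC$ with $\Omega^{n+k}_{X_{reg}}$, which is exactly the content of Lemma \ref{smoothgrdrQ} (after matching the shifts coming from the Tate twist): on the regular locus $\ic_X(n)$ agrees with $\QQ^H_{X_{reg}}(n)[2n]$, and Lemma \ref{smoothgrdrQ} computes its graded de Rham complex as $\Omega^{n+k}_{X_{reg}}$ in degree $0$ after the shift $[k-n]$.

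Here is how I would check the hypotheses. For (i), the grading convention places $\gr^F_{-k}\dr(\ic_X)$ in cohomological degrees $-2n,\dots,0$; after shifting by $[k-n]$ (and using that $\ic_X(n)$ has the Tate twist so that the lowest nonzero piece of the Hodge filtration sits in the right spot), the complex $\cC$ lies in degrees $\ge 0$ — more precisely, unwinding definitions, $\cH^j(\cC) = \cH^{j - (k-n)}(\gr^F_{-k}\dr(\ic_X(n)))$, and the hypothesis that $\ic_X$ is \emph{coherent m-perverse} (Definition \ref{weaklyperverse}(1)) says $\cd\supp\cH^i(\gr^F_{-k}\dr(\ic_X)[k-2n]) \ge 2i$, which forces vanishing of $\cH^i$ for $i < 0$, giving $\cC \in D^{\ge 0}_{coh}(X)$. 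For (ii), take $U = X_{reg}$: since $X$ is normal, $\cd(X \setminus X_{reg}) \ge 2$, and on $X_{reg}$ the module $\ic_X(n)$ is the smooth (VHS) Hodge module, so $\gr^F_{-k}\dr$ restricted to $X_{reg}$ is the locally free sheaf $\Omega^{n+k}_{X_{reg}}$ placed in a single degree by Lemma \ref{smoothgrdrQ}; hence $j^*\cH^0\cC$ is locally free, in fact equal to $\Omega^{n+k}_{X_{reg}}$. For (iii), the support bound on $\cH^j\cC$ is precisely the \emph{strongly coherent m-perverse} condition of Definition \ref{weaklyperverse}(2), $\cd\supp\cH^{i}(\gr^F_{-k}\dr(\ic_X)[k-2n]) \ge 2(i+1)$ for $i \ge 1$, after the bookkeeping that relates the shift $[k-2n]$ to $[k-n]$ together with the Tate twist. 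For the dual bound, I use Lemma \ref{grdrdual}: $\DD(\cC) = R\cH om(\cC, \omega_X^\bullet[-n])$ is, up to the normalization shift, $\gr^F_{k}\dr(\mathbf{D}(\ic_X(n)))$ with an appropriate shift, and since $\ic_X$ is a polarizable pure Hodge module it is self-dual up to Tate twist, $\mathbf{D}(\ic_X(n)) \cong \ic_X(-n)$ (equivalently $\ic_X(n)$ again after adjusting the twist index $k \mapsto -k$); thus $\DD(\cC)$ is itself of the form $\gr^F_{-k'}\dr(\ic_X(n))[k'-n]$ for $k' = -k$, and the strongly coherent m-perverse hypothesis applied to this index gives $\cd\supp\cH^j\DD(\cC) \ge 2j+2$ for $j \ge 1$.

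I expect the main obstacle to be purely bookkeeping: getting the cohomological shifts, the Tate-twist normalization $(n)$, and the two different shift conventions ($[k-2n]$ in Definition \ref{weaklyperverse} versus $[k-n]$ in the statement) to line up exactly, and in particular making the duality step precise — i.e., identifying $\DD(\gr^F_{-k}\dr(\ic_X(n))[k-n])$ with another complex of the same shape to which Definition \ref{weaklyperverse}(2) applies. This requires carefully combining Lemma \ref{grdrdual}, the self-duality $\mathbf{D}(\ic_X) \cong \ic_X(\dim X)$ of the intersection cohomology Hodge module, the behavior of the de Rham functor under Tate twists (which shifts the filtration index), and the definition of the normalized dualizing complex used in $\DD$. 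Once the indices are matched, no further geometric input is needed — everything follows formally from Proposition \ref{reconstructionProp4.7}.

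Finally, for the last sentence of the corollary: since $\cIC_X(-)$ is by construction (Definitions following \cite{schnell2015holonomicDmodabvar}) a functor of the locally free sheaf on $U$ alone, the equality $\cC = \cIC_X(j^*\cH^0\cC) = \cIC_X(\Omega^{n+k}_{X_{reg}})$ exhibits $\gr^F_{-k}\dr(\ic_X(n))[k-n]$, hence $\gr^F_{-k}\dr(\ic_X(n))$ itself, as uniquely determined by the restriction $j^*\cH^{k-n}\gr^F_{-k}\dr(\ic_X(n)) = \Omega^{n+k}_{X_{reg}}$, which is the claimed reconstruction statement.
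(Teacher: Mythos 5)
Your proposal follows essentially the same route as the paper's proof: reduce to Proposition \ref{reconstructionProp4.7}, get the support bound on $\cC$ directly from the strongly coherent m-perverse hypothesis, use Lemma \ref{grdrdual} together with the self-duality of the polarizable Hodge module $\ic_X$ to recognize $\DD(\cC)$ as another shifted graded de Rham complex of $\ic_X$ (at the dual Hodge index) so that the same hypothesis yields the dual support bound, and identify the degree-zero part over the regular locus with $\Omega^{n+k}_{X_{reg}}$ via Lemma \ref{smoothgrdrQ}. One small correction: the inequality $\cd\supp\cH^i(\cC)\ge 2i$ is vacuous for $i<0$ (every nonempty support has nonnegative codimension), so it does not force $\cC\in D^{\ge 0}_{coh}(X)$; that containment is instead part of the paper's standing normalization of $\cC_k=\gr^F_{-k}\dr(\ic_X)[k-2n]$ and follows from the structure of the Hodge filtration on $\ic_X$ (cf.\ the identification $\cH^{-2n+k}(\gr^F_{-k}\dr(\ic_X))\cong\pi_*\Omega^k_{\tilde{X}}$ from \cite{kebekus2021extending}).
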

\begin{proof}
    Since $\ic_X$ is a polarisable Hodge module, its de Rham complex is self-dual up to a shift in the filtration, see also \cite[Corollary 4.6]{kebekus2021extending}. Thus duality relates different graded quotients $\cC_k$. 
    More precisely, for $\cC_k:=\gr^F_{-k}\dr(\ic_X)[k-2n]$, the compatibility of $\gr\dr$ with duality gives $\DD(\cC_k)\cong \cC_{2n-k}[n]$ by Lemma \ref{grdrdual}. Since $\ic_X$ is strongly coherent m-perverse we have $\tau_{\ge 1}\cC_k\in \mervD_{coh}^{\le -1}(X)$, it's then easy to check that we also have $\tau_{\ge 1}\DD(\cC_k)\in \mervD_{coh}^{\le -1}(X)$. Now we can apply Proposition \ref{reconstructionProp4.7} and conclude. 
\end{proof}

\section{Proof of the main theorem}\label{mainthm}
We next prove our main theorem on the perverse coherent property of $\gr^F_{-k}\dr(\ic_X)$ when $X$ admits symplectic resolution. The result might be of independent interests as it encompasses the geometry of symplectic resolutions, while the statement is intrinsic. 

\subsection{Support conditions}
From now on we consider holomorphic symplectic variety $X$ of dimension $2n$. Throughout this section, we denote the complex $$\gr^F_{-k}\dr(\ic_X)[k-2n]\in D^{\ge 0}_{\coh}(X)$$ by $\cC_k$ to simplify the notation. Recall in Definition \ref{weaklyperverse} that when $X$ satisfies the \textbf{support condition (\ref{codimsupp}) for $\ic_X$}
\begin{equation*}\label{codimsupp}
        \cd \supp \cH^j\cC_k\ge 2j+2 \tag{$\star$}
\end{equation*}
for $j\ge 1$, we say that $\ic_X$ is \textbf{strongly coherent m-perverse}. 

If $X$ is smooth, then the support condition (\ref{codimsupp}) holds automatically because of Lemma \ref{smoothgrdrQ}. 

For $X$ singular with symplectic resolutions, we first prove a weaker lower bound using only the topological properties of symplectic resolutions. 
\begin{lem}
    Let $X$ be a holomorphic symplectic variety of dimension $2n$ which admits a symplectic resolution, and $\cC_k=\gr^F_{-k}\dr(\ic_X)[k-2n]$. Then 
    \begin{equation}\label{weakcodimsupp}
        \cd \supp \cH^j\cC_k\ge 2j
\end{equation}
for $j\ge 1$. Thus $\cC_k$ is a m-perverse coherent sheaf.
\end{lem}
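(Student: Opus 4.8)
The plan is to reduce the statement to a local computation around a point of the singular stratum, using the Weinstein-type product decomposition (Theorem \ref{weinsteindecomp}) and the identification $\pi_*\Omega^k_{\tilde X}\cong \cH^{-2n+k}(\gr^F_{-k}\dr(\ic_X))$ from \cite{kebekus2021extending}. Writing this identification in terms of $\cC_k=\gr^F_{-k}\dr(\ic_X)[k-2n]$, we get $\cH^j(\cC_k)\cong R^{j+?}\pi_*\Omega^{n+k}_{\tilde X}$ (shifted appropriately); more precisely the higher cohomology sheaves of $\cC_k$ are governed by the higher direct images $R^q\pi_*\Omega^{n+k}_{\tilde X}$ of the symplectic resolution $\pi:\tilde X\to X$. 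So the task becomes: show that $\supp R^q\pi_*\Omega^{n+k}_{\tilde X}$ has codimension $\ge 2q$ in $X$ for all $q\ge 1$.

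First I would set up the stratification $X=\coprod X^\circ_i$ from \S\ref{stratification} and argue stratum by stratum: it suffices to show that for a point $x\in X^\circ_i$ with $\cd_X(X^\circ_i)=c$, the stalk $(R^q\pi_*\Omega^{n+k}_{\tilde X})_x$ vanishes whenever $2q > c$. Using Theorem \ref{weinsteindecomp}, an analytic neighborhood of $x$ splits as $(Y_x,y)\times(V,x)$ with $V$ smooth of dimension $2n-c$ and $Y_x$ a symplectic variety of dimension $c$; by Proposition \ref{propslicehassymplecticresolution} the slice $Y_x$ itself admits a symplectic resolution $\tilde Y_x\to Y_x$, and the resolution $\pi$ restricts (étale-locally) to $\tilde Y_x\times V\to Y_x\times V$. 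Then by Künneth for the (derived) pushforward and the decomposition $\Omega^m_{\tilde Y_x\times V}=\bigoplus_{a+b=m}\Omega^a_{\tilde Y_x}\boxtimes\Omega^b_V$, the stalk of $R^q\pi_*\Omega^{n+k}_{\tilde X}$ at $x$ is controlled by $(R^q(\pi_{Y_x})_*\Omega^a_{\tilde Y_x})_y$ for various $a$. So the whole thing reduces to the following local assertion about a symplectic resolution $\rho:\tilde Y\to Y$ of a symplectic variety $Y$ of dimension $c$ that has an isolated singular point $y$ (after passing to the slice, the singular point of the slice is isolated in $Y_x$ relative to the deepest stratum — though I should be careful, $Y_x$ need not have isolated singularities; I would instead run the induction on $\dim$ of the symplectic variety): over the open stratum $\rho$ is an isomorphism, so $R^q\rho_*\Omega^a_{\tilde Y}$ is supported on the singular locus, which has codimension $\ge 2$ by normality; and over the point $y$ the fiber $E=\rho^{-1}(y)$ of a symplectic resolution is well understood — by Kaledin's results it is a union of Lagrangian subvarieties, so $\dim E\le c/2$, hence $R^q\rho_*\mathcal{F}$ vanishes for $q>\dim E$, i.e. for $2q>c=\cd_Y(\{y\})$. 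Patching the stratum-by-stratum bounds gives $\cd\supp\cH^j\cC_k\ge 2j$ for $j\ge 1$, which is exactly the m-perverse coherence condition of Definition \ref{weaklyperverse}(1).

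The main obstacle I anticipate is the bookkeeping in the product decomposition step: matching the Hodge-filtration degree $k$ and the cohomological shift across the isomorphism $(U,x)\cong(Y_x,y)\times(V,x)$, so that the graded de Rham complex of $\ic_X$ restricted to $U$ really is (a shift of) $\gr\dr(\ic_{Y_x})\boxtimes\Omega^\bullet_V$ — i.e. that $\ic$ and $\gr\dr$ are compatible with external products of analytic germs — and then keeping track of which direct-image sheaves $R^q(\pi_{Y_x})_*\Omega^a$ actually appear. The genuinely geometric input — that the exceptional fibers of a symplectic resolution are half-dimensional, so that $R^q\pi_*$ kills everything above degree $q=\dim(\text{fiber})\le\tfrac12\cd$ — is the heart of the matter and is where the hypothesis "admits a symplectic resolution" is used; the sharper bound $2j+2$ of $(\star)$, needed for strong m-perversity, will require squeezing one more degree out of this, presumably by combining the fiber-dimension bound on the deepest stratum with the codimension-$\ge 2$ estimate on strata of intermediate depth, and that refinement is presumably what the next lemma (Theorem \ref{5.4}) carries out.
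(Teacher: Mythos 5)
Your proposal reaches the right conclusion from the right geometric input, but it takes a considerably longer route than the paper. The paper's proof is a direct global argument: by the decomposition theorem $\ic_X$ is a direct summand of $R\pi_*\QQ_{\tilde X}[2n]$, so after applying $\gr^F_{-k}\dr$ (which commutes with proper pushforward) one gets an injection $\cH^j\cC_k\hookrightarrow R^j\pi_*\Omega^k_{\tilde X}$ as a direct summand; then one stratifies $X$ by the fiber dimension of $\pi$, uses semismallness of the symplectic resolution ($\cd S_\ell\ge 2\ell$ for the stratum with $\ell$-dimensional fibers) and the vanishing of $R^j\pi_*$ over points whose fiber has dimension $<j$, and reads off $\cd\supp\cH^j\cC_k\ge 2j$. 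No local product decomposition, no slices, no induction on dimension. Your detour through Theorem \ref{weinsteindecomp}, Proposition \ref{propslicehassymplecticresolution} and an induction on $\dim$ is essentially the machinery the paper saves for the \emph{sharper} bound $2j+2$ in Theorem \ref{5.4}; for the weak bound $2j$ it is overkill, and it saddles you with exactly the bookkeeping problems you flag (compatibility of $\gr\dr$ with external products of germs, tracking which $R^q(\pi_{Y_x})_*\Omega^a$ appear), none of which arise in the direct argument.

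Two points need tightening if you pursue your route (or the direct one). First, the phrase ``the higher cohomology sheaves of $\cC_k$ are governed by $R^q\pi_*\Omega^{n+k}_{\tilde X}$'' is not a proof step: the Kebekus--Schnell isomorphism $\pi_*\Omega^k_{\tilde X}\cong\cH^{-2n+k}(\gr^F_{-k}\dr(\ic_X))$ you cite only identifies the \emph{bottom} cohomology sheaf ($j=0$), and for $j\ge 1$ the correct statement is only that $\cH^j\cC_k$ is a \emph{direct summand} of $R^j\pi_*\Omega^k_{\tilde X}$, which requires invoking the decomposition theorem for $\pi$ and Lemma \ref{fcommuDR} explicitly. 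Second, the fiber of a symplectic resolution is not a union of Lagrangian subvarieties in general; the correct statement (Kaledin) is that fibers are isotropic, equivalently that $\pi$ is semismall, which still yields $\dim\pi^{-1}(y)\le\tfrac12\cd$ and is all you need. With those two repairs your argument closes, but you should be aware that the half-dimensionality of fibers, applied stratum by stratum to $\pi:\tilde X\to X$ itself, already finishes the proof without ever passing to transversal slices.
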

\begin{proof}
    Let $\pi:\tilde{X}\rightarrow X$ be a symplectic resolution, in particular a semismall morphism by \cite[Lem 2.11]{kaledin2006symplectic}. By the decomposition theorem, $\ic_X$ is a direct summand of $R\pi_*\QQ_{\tilde{X}}[2n]$. Taking the graded de Rham and using its compatibility with proper pushforward, we have the following injection as a direct summand
    $$
        \cH^j\cC_k\hookrightarrow \cH^j(\gr^F_{-k}\dr(R\pi_*\QQ_{\tilde{X}}[2n])[k-2n])= R^j\pi_*(\gr^F_{-k}\dr(\QQ_{\tilde{X}}[2n])[k-2n])=R^j\pi_*\Omega^k_{\tilde{X}}
    $$
    Let $S_\ell \subset X$ be a stratum where the fiber $\pi^{-1}(x)$ has dimension $\ell$ for any $x\in S_\ell$. Since $\pi$ is semismall, $\cd S_\ell\ge 2\ell$.
    
    Consider the intersection $Z_\ell=S_\ell\cap \supp \cH^j\cC_k$. For $x\in S_\ell$, if $j>\ell=\dim \pi^{-1}(x)$, then $(R^j\pi_*\Omega^k_{\tilde{X}})_x=0$ by \cite[02V7]{stacks-project}. Therefore $Z_\ell$ is empty if $j>\ell$. Finally, we have
    $$\cd \supp\cH^j\cC_k=\cd \bigcup_{\ell\ge j}Z_\ell\ge \cd \bigcup_{\ell\ge j}S_\ell\ge 2j.$$
    \end{proof}

To show the stronger result that $\ic_X$ is strongly coherent m-perverse and establish the symplectic Hard Lefschetz symmetry, one has to investigate more on the geometry of the symplectic varieties. We will use an inductive argument using the local structure of symplectic varities and reduce to the isolated singularities case, in which situation we have enough vanishing results to conclude.

Let's first recall the product formula of mixed Hodge modules
\begin{lem}[\cite{maxim2011symmetric}, (1.4.1)]\label{boxprodgrdr}
    Let $M=(\cM, F_\bullet\cM, K_M)$, $N=(\cN,F_\bullet\cN, K_N)$ be Hodge modules on quasi-projective varieties $X$ and $Y$, $\cM\boxtimes \cN$ be the exterior product on $X\times Y$. We have
    \begin{equation*}
        \gr^F_{-k}\dr(\cM\boxtimes \cN)\cong \bigoplus_{i+j=k}\gr^F_{-i}\dr(\cM)\boxtimes \gr^F_{-j}\dr(\cN)
    \end{equation*}
\end{lem}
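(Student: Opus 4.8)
The plan is to reduce the formula to a filtered Künneth identity for the de Rham functor. I work with Saito's convention of right $\cD$-modules. Recall that for a filtered holonomic right $\cD_Y$-module $(\cM,F_\bullet\cM)$ on a complex manifold $Y$ of pure dimension $d$, the de Rham complex is represented by the Koszul-type complex
\[
\dr_Y(\cM)=\bigl[\,\cM\otimes_{\cO_Y}\bigwedge^{d}\cT_Y\to\cdots\to\cM\otimes_{\cO_Y}\cT_Y\to\cM\,\bigr]
\]
in cohomological degrees $-d,\dots,0$, with differential built from the right $\cD_Y$-action and the Lie bracket on $\cT_Y$, and the induced Hodge filtration has, in cohomological degree $-a$, the term $\gr^F_{-k-a}\cM\otimes_{\cO_Y}\bigwedge^{a}\cT_Y$ of $\gr^F_{-k}\dr_Y(\cM)$. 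The Hodge module hypothesis is not really needed here: the content is about the underlying filtered $\cD$-modules, being a Hodge module only ensuring $\cM\boxtimes\cN$ is one again.

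First I would isolate two structural facts. (i) From $T^{*}(X\times Y)\cong T^{*}X\times T^{*}Y$ one has $\cT_{X\times Y}\cong p_X^{*}\cT_X\oplus p_Y^{*}\cT_Y$, hence $\bigwedge^{\bullet}\cT_{X\times Y}\cong\bigwedge^{\bullet}\cT_X\boxtimes\bigwedge^{\bullet}\cT_Y$ as graded $\cO_{X\times Y}$-algebras, and by the Leibniz rule the de Rham differential on the product is $d_X\otimes 1\pm 1\otimes d_Y$; equivalently, the Spencer resolution is multiplicative, $\mathrm{Sp}_{X\times Y}\cong\mathrm{Sp}_X\boxtimes\mathrm{Sp}_Y$. (ii) By construction of the external product of filtered $\cD$-modules, $\cM\boxtimes\cN$ carries the convolution filtration $F_p(\cM\boxtimes\cN)=\sum_{i+j=p}(F_i\cM)\boxtimes(F_j\cN)$; since the graded pieces in play are $\cO$-flat (locally free) this gives $\gr^F_p(\cM\boxtimes\cN)\cong\bigoplus_{i+j=p}\gr^F_i\cM\boxtimes\gr^F_j\cN$. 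Combining (i) and (ii), the filtered complex $(\dr_{X\times Y}(\cM\boxtimes\cN),F)$ is the external tensor product of $(\dr_X(\cM),F)$ and $(\dr_Y(\cN),F)$, with no higher Tor contribution since all terms are locally free over $\cO_X$ and $\cO_Y$.

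It then remains to read off the Hodge-degree $-k$ graded piece from this filtered Künneth isomorphism. In cohomological degree $-p$, the term $\gr^F_{-k}$ of $\dr_{X\times Y}(\cM\boxtimes\cN)$ equals
\[
\bigoplus_{\substack{a+b=p\\ i'+j'=-k-p}}\bigl(\gr^F_{i'}\cM\otimes\bigwedge^{a}\cT_X\bigr)\boxtimes\bigl(\gr^F_{j'}\cN\otimes\bigwedge^{b}\cT_Y\bigr),
\]
while the degree $-p$ term of $\bigoplus_{i+j=k}\gr^F_{-i}\dr_X(\cM)\boxtimes\gr^F_{-j}\dr_Y(\cN)$ equals
\[
\bigoplus_{\substack{i+j=k\\ a+b=p}}\bigl(\gr^F_{-i-a}\cM\otimes\bigwedge^{a}\cT_X\bigr)\boxtimes\bigl(\gr^F_{-j-b}\cN\otimes\bigwedge^{b}\cT_Y\bigr);
\]
the substitution $i'=-i-a$, $j'=-j-b$ is a bijection of the two index sets, compatible with the differentials by the Leibniz identity in (i). This yields the asserted isomorphism in $D^b_{coh}(X\times Y)$.

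The genuinely delicate part is not conceptual but a bookkeeping one: matching Saito's sign and shift conventions for right $\cD$-modules (including the twist by $\omega$ and the effect of Tate twists on the indexing), confirming that the external product of filtered $\cD$-modules really carries the convolution filtration --- which is essentially how it is defined --- and checking that passing to $\gr^F$ introduces no Tor terms, which is exactly the local freeness of the $\bigwedge^{a}\cT$ noted above. Granting the conventions, the rest is a reorganization of the Künneth decomposition with no further difficulty, so in the paper I would state the lemma and refer to \cite{maxim2011symmetric} for the details.
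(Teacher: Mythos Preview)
The paper does not prove this lemma; it simply states it with a citation to \cite{maxim2011symmetric}, which is precisely what you propose doing in your final sentence. Your filtered K\"unneth sketch is the standard argument and is correct, with one small imprecision: the decomposition $\gr^F_p(\cM\boxtimes\cN)\cong\bigoplus_{i+j=p}\gr^F_i\cM\boxtimes\gr^F_j\cN$ holds not because the graded pieces are locally free (they need not be) but because the external tensor product $\boxtimes$ is exact in each variable, as locally one is tensoring over the ground field.
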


Applying the product formula, we obtain the following proposition which will be used for induction.
\begin{lem}\label{codimsuppinductionproduct}
   Let $X$ be a complex algebraic variety of dimension $2n$. Suppose that $X$ has a product decomposition $X=X_1\times \cdots \times X_k$ such that $\ic_{X_i}$ is strongly coherent m-perverse on each factor, then $\ic_X$ is also strongly coherent m-perverse.
\end{lem}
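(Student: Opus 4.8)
The plan is to reduce immediately to the case of two factors by induction on the number of factors, and then to run the external product formula through the definitions. So suppose $X = X_1\times X_2$ with $\dim X_i = 2n_i$, so that $n = n_1+n_2$, and write $\cC^{(i)}_k := \gr^F_{-k}\dr(\ic_{X_i})[k-2n_i]$; the goal is the support condition~(\ref{codimsupp}) for $\ic_X$, namely $\cd\supp\cH^j\cC_k\ge 2j+2$ for all $j\ge 1$ and all $k$. Using the standard identification of intersection cohomology Hodge modules $\ic_X\cong\ic_{X_1}\boxtimes\ic_{X_2}$ (a standard fact, which I would cite rather than reprove: both sides are the pure Hodge module with strict support $X$ restricting to the trivial variation on the smooth locus), Lemma~\ref{boxprodgrdr} gives $\gr^F_{-k}\dr(\ic_X)\cong\bigoplus_{a+b=k}\gr^F_{-a}\dr(\ic_{X_1})\boxtimes\gr^F_{-b}\dr(\ic_{X_2})$. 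The first step is then pure bookkeeping: from $\gr^F_{-a}\dr(\ic_{X_i})=\cC^{(i)}_{a}[2n_i-a]$ and the fact that external product sends shifts to the sum of the shifts, the accumulated internal shift $[(2n_1-a)+(2n_2-b)]=[2n-k]$ cancels the defining shift $[k-2n]$ of $\cC_k$, and one obtains
$$\cC_k\;\cong\;\bigoplus_{a+b=k}\cC^{(1)}_{a}\boxtimes\cC^{(2)}_{b}.$$

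The second step passes to cohomology sheaves and their supports. Over a field, the external product of bounded complexes of coherent sheaves is exact in each argument, so the Künneth formula gives $\cH^j\big(\cC^{(1)}_{a}\boxtimes\cC^{(2)}_{b}\big)\cong\bigoplus_{s+t=j}\cH^{s}\cC^{(1)}_{a}\boxtimes\cH^{t}\cC^{(2)}_{b}$; moreover $\supp(\cF_1\boxtimes\cF_2)=\supp\cF_1\times\supp\cF_2$, and since $X_1,X_2$ are varieties (hence equidimensional) codimension of support is additive, $\cd\supp(\cF_1\boxtimes\cF_2)=\cd\supp\cF_1+\cd\supp\cF_2$ for $\cF_1,\cF_2\ne 0$. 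Because the codimension of support of a finite direct sum is the minimum over the summands, everything reduces to the elementary estimate
$$\cd\supp\cH^{s}\cC^{(1)}_{a}\;+\;\cd\supp\cH^{t}\cC^{(2)}_{b}\;\ge\;2j+2$$
for all integers $a+b=k$ and $s+t=j\ge 1$ with both cohomology sheaves nonzero.

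The third step combines the hypothesis with a pigeonhole observation. Since each $\ic_{X_i}$ is strongly coherent m-perverse, $\cd\supp\cH^{s}\cC^{(i)}_{c}\ge 2s+2$ whenever $s\ge 1$ (for every grading index $c$), while $\cC^{(i)}_{c}\in D^{\ge 0}_{\coh}$ forces $\cd\supp\cH^{0}\cC^{(i)}_{c}\ge 0$; combining these, $\cd\supp\cH^{t}\cC^{(i)}_{c}\ge 2t$ for every $t\ge 0$. Now given $s+t=j\ge 1$, at least one of $s,t$ is positive, say $s\ge 1$; then the left-hand side above is at least $(2s+2)+2t = 2j+2$. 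This proves~(\ref{codimsupp}) for $\ic_{X_1\times X_2}$, and the general statement follows by induction on the number of factors. I do not expect a genuine obstacle here; the only points that require attention are the cancellation of shifts in the product formula, keeping the two index pairs $(a,b)$ and $(s,t)$ distinct, and the appeal to $\ic_X\cong\ic_{X_1}\boxtimes\ic_{X_2}$.
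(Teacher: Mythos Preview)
Your proof is correct and follows essentially the same approach as the paper: reduce to two factors, use $\ic_X\cong\ic_{X_1}\boxtimes\ic_{X_2}$ together with the external product formula and K\"unneth, then estimate codimensions termwise. Your shift bookkeeping is more explicit and your pigeonhole phrasing (packaging the two cases into the single bound $\cd\supp\cH^{t}\cC^{(i)}_{c}\ge 2t$ for all $t\ge 0$) is slightly cleaner than the paper's case split, but the substance is identical.
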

\begin{proof}
    For simplicity it suffices to prove the lemma when $k=2$. 
    
    On $X=X_1\times X_2$ we have $\ic_{X}\cong \ic_{X_1}\boxtimes \ic_{X_2}$. Let $d_{X_i}$ be the dimension of $X_i$. Let $j\ge 1$, we have the following
\begin{align*}\label{tensorinduction}
    &\cH^j(\gr^F_{-k}\dr(\ic_X)[k-2n])\\
    &\cong \bigoplus_{p+q=k}\cH^j(\gr^F_{-p}\dr(\ic_{X_1})[p-d_{X_1}]\boxtimes \gr^F_{-q}\dr(\ic_{X_2})[q-d_{X_2}])\\
    &\cong \bigoplus_{\alpha+\beta=j} \bigoplus_{p+q=k}\cH^\alpha(\gr^F_{-p}\dr(\ic_{X_1})[p-d_{X_1}])\boxtimes \cH^\beta(\gr^F_{-q}\dr(\ic_{X_2})[q-d_{X_2}])
\end{align*}
where the first isomorphism is given by the external product formula of Hodge modules, and the second equality is the K\"unneth formula in derived category.

Since on $X_i$, $\ic_{X_i}$ is strongly coherent m-perverse, thus 
$$
    \cd \supp \cH^\alpha(\gr^F_{-p}\dr(\ic_{X_i}[p-d_{X_i}])\ge 2\alpha+2
$$
for $\alpha\ge 1$ and $i=1$ or $2$. 

If $\alpha\ge 1$ and $\beta \ge 1$, we have 
$$
    \cd \supp \cH^\alpha(\gr^F_{-p}\dr(\ic_{X_1})[p-d_{X_1}])\boxtimes \cH^\beta(\gr^F_{-q}\dr(\ic_{X_2})[q-d_{X_2}])\ge 2(\alpha+\beta)+4>2j+2.
$$

If one of $\alpha,\beta$ is $0$, say $\alpha=0$, $\beta=j\ge 1$. In this case $\cH^0(\gr^F_{-p}\dr(\ic_{X_1})[p-d_{X_1}])$ may have support with dimension $d_{X_1}$(for example when $X_2$ is smooth). Thus 
$$
    \cd \supp \cH^0(\gr^F_{-p}\dr(\ic_{X_1})[p-d_{X_1}])\boxtimes \cH^j(\gr^F_{-q}\dr(\ic_{X_2})[q-d_{X_2}])\ge 2j+2.
$$
Therefore on $X$, $\ic_X$ is strongly coherent m-perverse.
\end{proof}

Now we show that when the symplectic singularity admits a symplectic resolution, we have the desired strong lower bound on the codimension of supports. 
\begin{thm}\label{5.4}
    Let $X$ be a holomorphic symplectic variety of dimension $2n$, which admits a symplectic resolution. Then $\ic_X$ is strongly coherent m-perverse, i.e. we have
    $$\cd\supp \cH^i(\gr^F_{-k}\dr(\ic_X)[k-2n])\ge 2(i+1)$$
    for all $i\ge 1$ and $k$.
\end{thm}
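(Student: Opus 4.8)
The plan is to prove Theorem \ref{5.4} by induction on the dimension $2n$, reducing via the canonical stratification of symplectic varieties to the case of an isolated singularity. The base case $n=0$ is trivial, and the smooth case follows from Lemma \ref{smoothgrdrQ}. So assume $X$ is singular of dimension $2n$ with symplectic resolution $\pi:\tilde X\to X$. We already know from the preceding lemma that $\cd\supp\cH^j\cC_k\ge 2j$ for $j\ge 1$; the task is to improve this by $2$ for $j\ge 1$. Let $Z_j=\supp\cH^j\cC_k$. The strategy is to show that $Z_j$ cannot contain any stratum of the canonical stratification of dimension exactly $2j$, which combined with the weak bound forces $\cd Z_j\ge 2j+2$ (since strata come in the codimension-parity of symplectic leaves — in fact all strata have even codimension).

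First I would localize. Since $\gr^F_{-k}\dr(\ic_X)$ and the support condition are \'etale-local (by the analytic nature of mixed Hodge modules), and $\cH^j\cC_k$ is a coherent sheaf, it suffices to check at an arbitrary closed point $x$ lying in an open stratum $X^\circ_i$ with $\cd X^\circ_i=2j$, and show that $(\cH^j\cC_k)_x=0$ there (more precisely, that $x\notin\supp\cH^j\cC_k$). By Theorem \ref{weinsteindecomp}, near $x$ we have a Poisson product decomposition $(U,x)\cong (Y_x,y)\times(V,x)$ with $V$ smooth symplectic of dimension $2j$ and $Y_x$ a symplectic variety of dimension $2n-2j$ with an \emph{isolated} singularity at $y$ (it is isolated because $x$ lies on an open stratum, so the product slice has its singularity concentrated at the origin). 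Moreover by Proposition \ref{propslicehassymplecticresolution}, $Y_x$ itself admits a symplectic resolution. By the product formula Lemma \ref{boxprodgrdr} and the computation of $\gr^F\dr$ on the smooth factor (Lemma \ref{smoothgrdrQ}), the stalk of $\cH^j\cC_k$ at $x$ is controlled by the cohomology sheaves $\cH^j$ of the graded de Rham complexes of $\ic_{Y_x}$ at the isolated singular point $y$. So the whole theorem reduces to: for a symplectic variety $Y$ of dimension $2m$ with an isolated singularity admitting a symplectic resolution, $\cH^j(\gr^F_{-k}\dr(\ic_Y)[k-2m])$ is supported only at the singular point when $j\ge 1$, and we must rule out $j\ge 1$ entirely, i.e. show these cohomology sheaves vanish for $j\ge 1$ — wait, more carefully: we need $\cd\supp\cH^j\ge 2j+2$, and for an isolated singularity any nonzero coherent sheaf supported at the point has $\cd=2m$, so we need $2m\ge 2j+2$, i.e. $j\le m-1$; thus it suffices to show $\cH^j(\gr^F_{-k}\dr(\ic_Y)[k-2m])=0$ for $j\ge m$.

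For the isolated case I would use the identification $\pi_*\Omega^k_{\tilde Y}\cong\cH^{-2m+k}(\gr^F_{-k}\dr(\ic_Y))$ from \cite{kebekus2021extending}, i.e. $\cH^0\cC_k\cong\pi_*\Omega^k_{\tilde Y}$, together with the decomposition theorem: $\ic_Y$ is a direct summand of $R\pi_*\QQ_{\tilde Y}[2m]$, hence $\cC_k$ is a direct summand of $R\pi_*\Omega^k_{\tilde Y}$, so $\cH^j\cC_k$ is a summand of $R^j\pi_*\Omega^k_{\tilde Y}$. Because the singularity is isolated, away from the point $\pi$ is an isomorphism, so $\cH^j\cC_k$ for $j\ge 1$ is a skyscraper at the point, and its fiber injects into $(R^j\pi_*\Omega^k_{\tilde Y})_y$, whose vanishing for $j\ge m$ I would deduce from the fact that $\pi$ is semismall (so the exceptional fiber $E=\pi^{-1}(y)$ has $\dim E\le m$) combined with Grauert–Riemenschneider-type or formal-function vanishing: $R^j\pi_*\Omega^k_{\tilde Y}=0$ for $j>\dim E$, giving vanishing for $j\ge m+1$, and the critical case $j=m$ requires an extra input. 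Here I expect the main obstacle: pinning down $R^m\pi_*\Omega^k_{\tilde Y}=0$ at an isolated singularity with $\dim E=m$. I would handle it using the self-duality $\DD(\cC_k)\cong\cC_{2m-k}[m]$ coming from Lemma \ref{grdrdual} (polarizability of $\ic_Y$): the top cohomology $\cH^m\cC_k$, being a skyscraper, is dual to the lowest cohomology $\cH^0$ of $\cC_{2m-k}$ in a way that forces it to vanish if the $\cH^0$ is a genuine (torsion-free, reflexive) sheaf — which it is, being $\pi_*\Omega^{2m-k}_{\tilde Y}\cong\Omega^{[2m-k]}_Y$ by Lemma \ref{extensionform}, hence has no embedded skyscraper component, hence its dual has nothing in the top degree. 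Assembling: the weak bound gives $j\ge 1\Rightarrow\cd\ge 2j$; the stratum-by-stratum local analysis plus the isolated-case vanishing for $j\ge m$ rules out strata of dimension exactly $2j$; evenness of strata codimensions then upgrades $\cd\ge 2j$ to $\cd\ge 2j+2$. The final sentence of the theorem, that symplectic Hard Lefschetz (Theorem \ref{introsymmetry}) follows, is then immediate since strong coherent m-perversity is exactly its hypothesis.
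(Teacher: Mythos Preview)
Your overall architecture matches the paper's: induction on dimension, \'etale-local reduction via the product decomposition of Theorem~\ref{weinsteindecomp}, and a separate treatment of the zero-dimensional stratum. Two points deserve correction, one minor and one substantive.

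First, the minor structural point: the transversal slice $Y_x$ does \emph{not} in general have an isolated singularity. If $x\in X^\circ_i$ with $i$ not maximal, the deeper strata $X^\circ_{i+1},\dots,X^\circ_m$ meet the slice nontrivially, so $Y_x$ is a lower-dimensional symplectic variety with its own stratification. The paper handles this cleanly by applying the \emph{induction hypothesis} to $Y_x$ (it has strictly smaller dimension and, by Proposition~\ref{propslicehassymplecticresolution}, a symplectic resolution), combined with the product formula of Lemma~\ref{codimsuppinductionproduct}. Your ``evenness of strata codimensions upgrades $\cd\ge 2j$ to $\cd\ge 2j+2$'' is also not quite right as stated, since $\supp\cH^j\cC_k$ need not be a union of canonical strata; the inductive product argument is what actually does the work.

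The substantive gap is your treatment of the critical case $j=m$ at an isolated singularity. Your claim that ``the top cohomology $\cH^m\cC_k$, being a skyscraper, is dual to the lowest cohomology $\cH^0\cC_{2m-k}$'' is not how Grothendieck duality behaves on complexes of coherent sheaves. With the normalized duality $\DD=R\cH om(-,\omega_Y^\bullet[-2m])$ one has $\DD(\cC_k)\cong\cC_{2m-k}$, but a skyscraper summand $\kappa(y)[-m]$ dualizes to $\kappa(y)[-m]$, i.e.\ lands again in degree $m$ of $\cC_{2m-k}$, not in degree $0$. The reflexivity of $\cH^0\cC_{2m-k}=\Omega^{[2m-k]}_Y$ gives no obstruction to this. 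More generally, the hypercohomology spectral sequence for $\DD$ shows that $\cH^m\cC_k$ receives contributions both from $\cE xt^m(\cH^0\cC_{2m-k},\omega_Y)$ (not killed by reflexivity alone) and from $(\cH^m\cC_{2m-k})^\vee$ (circular).

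The paper's argument for $\cH^j\cC_k=0$ when $j\ge n$ (Lemma~\ref{singdim0}) is genuinely different and uses specific geometric input from symplectic resolutions. One embeds $\cH^j\cC_k$ into $R^j\pi_*\Omega^k_{\tilde X}$ via the decomposition theorem and passes to the formal completion $\fX$ along the fibre. For $k\ne n$ the vanishing of $H^n(\fX,\Omega^k_\fX)$ comes from two lemmas of Kaledin: $H^p(\fX,\Omega^q_\fX)=0$ for $p>q$ (Lemma~\ref{Kaledinlemma2.13}), and $R^p\pi_*\Omega^q=0$ for $p+q>\dim(\tilde X\times_X\tilde X)$ (Lemma~\ref{Kaledinlemma2.10}). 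For the remaining case $j=k=n$, one observes that along the anti-diagonal $p+q=2n$ only $H^n(\fX,\Omega^n_\fX)$ survives, so the Hodge--de~Rham spectral sequence forces $H^n(\fX,\Omega^n_\fX)\cong H^{2n}_{\mathrm{dR}}(\fX)\cong H^{2n}(F_i,\CC)$; but the decomposition theorem already accounts for exactly this piece via the skyscraper summand $L_i$, leaving nothing for $\cH^n\cC_n$. Your duality shortcut does not substitute for this computation.
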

\begin{proof}
    We prove by induction on the dimension. 
    Let $\cS=\{\phi=X_{m+1}\subsetneq X_m\dots\subsetneq X_1\subsetneq X_0=X\}$ be the canoical stratification obtained by taking singular locus successively, i.e. $X_{i+1}$ is the singular locus of $X_{i}$. The open stratum $X^\circ_i:=X_i\setminus X_{i+1}$ is smooth and symplectic. 
    
    When $\dim X=2$, $X$ can only has isolated singularities as $X$ is normal. We will show later in Corollary \ref{suppisolated}, that if $X$ has a symplectic resolution and has only isolated singularities, then $\ic_X$ is strongly coherent m-perverse. In particular, $\ic_X$ is strongly coherent m-perverse when $X$ has dimension $2$. This settles the induction base. 
    
    From now we assume that the singular locus $X_1=X_{sing}$ has positive dimension.
    
    Recall that $\cC_k=\gr^F_{-k}\dr(\ic_X)[k-2n]$. The restriction of $\ic_X$ on the smooth locus $X_{reg}=X\setminus X_1$ is just $\QQ_{X_{reg}}[2n]$, where the shifted graded de Rham complex is $\gr^F_{-k}\dr(\QQ_{X_{reg}}[2n])[k-2n]$ which is concentrated at degree $0$, thus $\supp\cH^j\cC_k\subset X_1$ for $j\ge 1$. 

    Since mixed Hodge modules (therefore graded de Rham complexes) are analytically-local objects, the support condition (\ref{codimsupp}) for the graded de Rham complex of pure Hodge module $\ic_X$ can be checked locally. 
    
    Consider a positive dimension stratum $X_i$. Let $x\in X^\circ_i$ be a closed point in the open stratum. By Theorem \ref{weinsteindecomp} and Proposition \ref{propslicehassymplecticresolution} there exists a symplectic variety $Y_x$ as the transversal slice at $x$, which admits symplectic resolutions. We can find an analytic neighborhood such that $U\simeq Y_x\times X^\circ_{i,x}$. Then we have $\ic_{Y_x\times X^\circ_{i,x}}\cong \ic_{Y_x}\boxtimes \ic_{X^\circ_{i,x}}$. Here $\ic_{X^\circ_{i,x}}=\QQ_{X^\circ_{i,x}}[d_{X_1}]$ by smoothness. As in Lemma \ref{codimsuppinductionproduct}, we have
\begin{align*}
    &\cH^j(\gr^F_{-k}\dr(\ic_{Y_x\times X^\circ_{i,x}})[k-2n])\\
    &\cong \bigoplus_{p+q=k}\cH^j(\gr^F_{-p}\dr(\ic_{Y_x})[p-d_Y]\boxtimes \gr^F_{-q}\dr(\ic_{X^\circ_{i,x}})[q-d_{X_i}])\\
    &\cong \bigoplus_{\alpha+\beta=j} \bigoplus_{p+q=k}\cH^\alpha(\gr^F_{-p}\dr(\ic_{Y_x})[p-d_Y])\boxtimes \cH^\beta(\gr^F_{-q}\dr(\ic_{X^\circ_{i,x}})[q-d_{X_i}])\\
    &\cong \bigoplus_{\alpha+\beta=j} \bigoplus_{p+q=k}\cH^\alpha(\gr^F_{-p}\dr(\ic_{Y_x})[p-d_Y])\boxtimes \cH^\beta(\gr^F_{-q}\dr(\QQ_{X^\circ_{i,x}}[d_{X_1}])[q-d_{X_i}])\\
    &\cong \bigoplus_{\alpha+\beta=j} \bigoplus_{p+q=k}\cH^\alpha(\gr^F_{-p}\dr(\ic_{Y_x})[p-d_Y])\boxtimes \cH^\beta(\Omega^q_{X^\circ_{i,x}}[0])\\
    &\cong \bigoplus_{p+q=k}\cH^j(\gr^F_{-p}\dr(\ic_{Y_x})[p-d_Y])\boxtimes \Omega^q_{X^\circ_{i,x}}
\end{align*}

where the second isomorphism is given by the external product formula of Hodge modules, the third equality is the K\"unneth formula in derived category, the fourth equality comes from the fact that $X^\circ_{i,x}$ is regular, the fifth equality is Lemma \ref{smoothgrdrQ}.

Since $\dim Y_x<\dim X_x$, by induction hypothesis, we have 
$$\cd \supp \cH^j(\gr^F_{-p}\dr(\ic_{Y_x})[p-d_Y])\ge 2j+2 $$
for $j\ge 1$ on $Y_x$. Meanwhile, $\Omega^q_{X^\circ_{i,x}}$ have full support on $X^\circ_{i,x}$. Thus we also have 
$$\cd \supp \cH^j\cC_k|_U\ge 2j+2 $$
for $j\ge 1$ on the analytic \'etale neighborhood $U$ of $x\in X$. 

If the lowest stratum $X_m$ has positive dimension, run the above argument for all the (positive dimension) strata, we see the support condition (\ref{codimsupp}) holds on $X$ and $\ic_X$ is strongly coherent m-perverse. Otherwise, assume the lowest stratum $X_m$ has dimension $0$, which consists of isolated singularities in $X_{m-1}$ by the construction of the stratification. Since $X_m$ has codimension $2n$, it doesn't affect the support condition (\ref{codimsupp}) when $1\le j\le n-1$. When $j\ge n$, it suffices to show that $\cH^j\cC_k$ has no support, i.e. $\cH^j\cC_k=0$. This is proved in the lemma below.

In all, we have shown that $\ic_X$ is strongly coherent m-perverse if $\dim X=2n$. We conclude the proof by induction.
\end{proof}

\begin{lem}\label{singdim0}
    $\cH^j\cC_k=0$ for $j\ge n$
\end{lem}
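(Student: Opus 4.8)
The plan is to split off the easy range $j>n$ and treat $j=n$ by a duality argument. For $j>n$, the weak support bound from the previous lemma gives $\cd\supp\cH^j\cC_k\ge 2j>2n=\dim X$, which is impossible for a nonempty closed subset, so $\cH^j\cC_k=0$. For $j=n$, the same bound \eqref{weakcodimsupp} gives $\cd\supp\cH^n\cC_k\ge 2n$, so $\cH^n\cC_k$ is a coherent sheaf with $0$-dimensional support, i.e.\ a torsion sheaf; the point is to rule such a sheaf out.

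Here I would use Grothendieck duality. Recall (Lemma~\ref{grdrdual}, as in the proof of Corollary~\ref{reconstructionwhenicsemiperverse}) that $\DD\cC_k\cong\cC_{2n-k}[n]$, where $\DD=R\cH om(-,\omega^\bullet_X[-2n])$; since a holomorphic symplectic variety is Gorenstein, $\omega^\bullet_X\cong\omega_X[2n]$ with $\omega_X$ a line bundle, so $\DD(-)\cong R\cH om(-,\omega_X)$. As $\cC_{2n-k}\in D^{\ge 0}_{coh}(X)$ and, by the weak bound applied to $\cC_{2n-k}$, $\cH^j\cC_{2n-k}=0$ for $j>n$, we have $\cC_{2n-k}[n]\in D^{[-n,0]}_{coh}(X)$. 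By biduality $\cC_k\cong\DD(\DD\cC_k)\cong R\cH om(\cC_{2n-k}[n],\omega_X)$, and I would compute $\cH^n\cC_k$ from the hyperext spectral sequence
$$E_2^{p,q}=\underline{\mathcal{E}xt}^p_{\cO_X}\big(\cH^{-q}(\cC_{2n-k}[n]),\omega_X\big)\Longrightarrow\cH^{p+q}R\cH om(\cC_{2n-k}[n],\omega_X).$$
Because $\cC_{2n-k}[n]\in D^{[-n,0]}_{coh}(X)$ this is a first-quadrant spectral sequence, and writing $\cH^{-q}(\cC_{2n-k}[n])=\cH^{n-q}\cC_{2n-k}$ the terms on the antidiagonal $p+q=n$ are $E_2^{p,\,n-p}=\underline{\mathcal{E}xt}^p(\cH^p\cC_{2n-k},\omega_X)$ for $p=0,\dots,n$. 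For $1\le p\le n-1$ the weak bound gives $\cd\supp\cH^p\cC_{2n-k}\ge 2p>p$, so $\underline{\mathcal{E}xt}^p$ vanishes (it is zero below the codimension of the support); for $p=n$ the sheaf $\cH^n\cC_{2n-k}$ has $0$-dimensional support, whence $R\cH om(\cH^n\cC_{2n-k},\omega_X)$ is concentrated in degree $2n$ and $\underline{\mathcal{E}xt}^n$ vanishes as well. Thus the only surviving term is $E_2^{0,n}=\underline{\mathcal{H}om}(\cH^0\cC_{2n-k},\omega_X)=\underline{\mathcal{H}om}(\Omega^{[2n-k]}_X,\omega_X)$, using $\cH^0\cC_{2n-k}=\pi_*\Omega^{2n-k}_{\tilde X}=\Omega^{[2n-k]}_X$ from Lemma~\ref{extensionform}.

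Since the spectral sequence is first-quadrant there are no differentials entering the edge term $E_r^{0,n}$, so $\cH^n\cC_k\cong E_\infty^{0,n}$ is a \emph{subsheaf} of $\underline{\mathcal{H}om}(\Omega^{[2n-k]}_X,\omega_X)$. The latter is torsion-free, being $R\cH om$ of a coherent sheaf into the line bundle $\omega_X$, hence so is $\cH^n\cC_k$; combined with the fact already established that $\cH^n\cC_k$ is a torsion sheaf, this forces $\cH^n\cC_k=0$, and together with the case $j>n$ this proves the lemma. The delicate point is precisely this last spectral-sequence bookkeeping: the argument works only because $\cH^n\cC_k$ appears as a subsheaf, not a quotient or an iterated extension, of the single surviving $\underline{\mathcal{H}om}$-term, which is what the vanishing of all other terms on the line $p+q=n$ and the absence of incoming differentials guarantee; the Gorenstein property of symplectic varieties enters only to replace $\omega^\bullet_X$ by the line bundle $\omega_X[2n]$.
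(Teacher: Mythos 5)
Your reduction of the range $j>n$ to the weak bound \eqref{weakcodimsupp} is correct: for $j>n$ the bound forces $\cd\supp\cH^j\cC_k>2n$, so the support is empty. The problem is the case $j=n$, where your spectral sequence is mis-indexed in a way that is fatal to the argument. The duality coming from Lemma~\ref{grdrdual} identifies $\cC_k$ with $R\cH om(\cC_{2n-k},\omega_X)$ (on the smooth locus both sides are $\Omega^k_{X_{reg}}$ in degree $0$; your normalization $\cC_k\cong R\cH om(\cC_{2n-k}[n],\omega_X)$ is off by a shift, as one sees by restricting to $X_{reg}$). In the hyperext spectral sequence $E_2^{p,q}=\mathcal{E}xt^p(\cH^{-q}\cC_{2n-k},\omega_X)\Rightarrow \cH^{p+q}\cC_k$ the complex $\cC_{2n-k}$ sits in degrees $[0,n]$, so $q$ ranges over $[-n,0]$ and the terms on the line $p+q=n$ are $\mathcal{E}xt^{\,n+j}(\cH^{j}\cC_{2n-k},\omega_X)$ for $j=0,\dots,n$ --- not $\mathcal{E}xt^{\,j}(\cH^{j}\cC_{2n-k},\omega_X)$. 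With the correct indices none of your vanishings go through: the criterion ``$\mathcal{E}xt^p=0$ for $p$ below the codimension of the support'' would require $n+j<2j$, i.e.\ $j>n$; the $j=0$ term is $\mathcal{E}xt^{\,n}(\Omega^{[2n-k]}_X,\omega_X)$, a torsion sheaf on the singular locus rather than a torsion-free $\mathcal{H}om$; and the $j=n$ term is $\mathcal{E}xt^{\,2n}(\cH^{n}\cC_{2n-k},\omega_X)$, which by local duality is Matlis-dual to the finite-length sheaf $\cH^{n}\cC_{2n-k}$ and hence nonzero precisely when that sheaf is nonzero --- so the argument becomes circular at the self-dual index $k=n$.

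This is not just a bookkeeping slip: no argument using only the weak support bound and Grothendieck duality can settle the case $j=k=n$, because the skyscraper summands $L_i$ in the decomposition $R\pi_*\QQ_{\tilde X}[2n]\cong\ic_X\oplus(\bigoplus_i L_i)\oplus N$ contribute to $R^n\pi_*\Omega^n_{\tilde X}$ in exactly the same bidegree and satisfy the same formal duality and support constraints as a putative nonzero $\cH^n\cC_n$. One genuinely has to show that the entire stalk $H^n(\fX_i,\Omega^n_{\fX_i})$ is accounted for by $L_i=H^{2n}(F_i,\CC)$. The paper does this with geometric input you have no substitute for: Kaledin's vanishing theorems (Lemmas~\ref{Kaledinlemma2.10} and \ref{Kaledinlemma2.13}) kill all terms $H^q(\fX,\Omega^p_\fX)$ with $p+q=2n$ except $p=q=n$, and the degeneration of the Hodge--de Rham spectral sequence on the formal completion then identifies $H^n(\fX_i,\Omega^n_{\fX_i})$ with $H^{2n}_{\dr}(\fX_i)=H^{2n}(F_i,\CC)$, forcing $\widehat{(\cH^n\cC_n)}_{x_i}=0$. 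You would need to incorporate this (or an equivalent) Hodge-theoretic step to close the gap.
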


Before we prove this lemma we recall vanishing theorems by Kaledin which will be used during the proof for reader's convenience, we refer to the original paper for complete proofs. 
\begin{lem}[\cite{kaledin2006symplectic}, Lemma 2.10]\label{Kaledinlemma2.10}
    Let $X$ and $Y$ be algebraic varieties over $k$. Assume that $X$ is smooth and let $\tau:X\rightarrow Y$ be a projective morphism. Then $R^p\tau_*\Omega_X^q=0$ whenever $p+q>\dim X\times_YX$. 
\end{lem}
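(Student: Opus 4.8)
The plan is to deduce this from Saito's decomposition theorem applied to the trivial Hodge module on $X$: apply the graded de Rham functor to the decomposition and compare cohomological amplitudes on the two sides.

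Set $N=\dim X$ and $d=\dim X\times_Y X$. One may assume $k=\CC$, that $X$ is connected (hence irreducible), and that $\tau$ is surjective onto $Y$, since replacing $Y$ by $\tau(X)$ changes neither the sheaves $R^p\tau_*\Omega^q_X$ nor $d$. Let $\QQ^H_X[N]\in\HM(X,N)$ be the pure Hodge module underlying $\QQ_X[N]$; a computation as in Lemma \ref{smoothgrdrQ} gives $\gr^F_{-q}\dr(\QQ^H_X[N])\simeq\Omega^q_X[N-q]$, concentrated in cohomological degree $q-N$. Since $\tau$ is proper, Lemma \ref{fcommuDR} yields $\gr^F_{-q}\dr(R\tau_*\QQ^H_X[N])\simeq (R\tau_*\Omega^q_X)[N-q]$, whose degree-$m$ cohomology sheaf is $R^{m+N-q}\tau_*\Omega^q_X$. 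On the other hand Saito's decomposition theorem gives $R\tau_*\QQ^H_X[N]\simeq\bigoplus_i \cH^i[-i]$ in $D^b\HM(Y)$, and each $\cH^i$ decomposes by strict support as $\bigoplus_Z\cH^i_Z$. Applying $\gr^F_{-q}\dr$, the statement reduces to the following: for every irreducible $Z\subseteq Y$ arising as a strict support, if $\cH^a(\gr^F_{-q}\dr(\cH^i_Z))\neq 0$ then the pushforward degree $\ell:=a-i+N-q$ to which this summand contributes satisfies $\ell+q\le \dim Z+2f_Z$, where $f_Z$ is the dimension of $\tau^{-1}(z)$ for $z\in Z$ general. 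One then concludes, because $\tau^{-1}(Z)\times_Z\tau^{-1}(Z)$ is a closed subscheme of $X\times_Y X$ whose fibre over a general point of $Z$ is $\tau^{-1}(z)\times\tau^{-1}(z)$ of dimension $2f_Z$, so $\dim Z+2f_Z\le\dim\tau^{-1}(Z)\times_Z\tau^{-1}(Z)\le d$.

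The inequality $\ell+q\le\dim Z+2f_Z$ is to be obtained from two inputs. First, the support and cosupport estimates of the decomposition theorem (equivalently, the Verdier self-duality up to twist of $R\tau_*\QQ^H_X[N]$) constrain the perverse degree $i$: generically on $Z$ the summand $\cH^i_Z$ is the intermediate extension of a variation of Hodge structure $V_i$ which is a subquotient of $R^{i-\dim Z+N}$ of the family of fibres over $Z$, so it is nonzero only for $\dim Z-N\le i\le\dim Z-N+2f_Z$, and $V_i$ then has Hodge length at most $f_Z$. Second, the cohomological amplitude of $\gr^F_{-q}\dr(\cH^i_Z)$ is controlled by $\dim Z$ and by this Hodge length: its top degree is $\le 0$, its amplitude is at most the Hodge length, and it is nonzero only for $q$ in a corresponding window; substituting these into $\ell=a-i+N-q$ gives the asserted bound. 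The main obstacle is this second input: making the amplitude estimate for $\gr^F_{-q}\dr(\cH^i_Z)$ precise requires careful bookkeeping of Tate twists and of the way the Hodge filtration of $V_i$ interacts with the de Rham complex (a calibrating case is the blow-up of a smooth surface at a point, where the skyscraper summand, after a $(-1)$-twist, contributes exactly to $R^1\tau_*\Omega^1_X$, i.e. on the boundary $\ell+q=d$).

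A more elementary argument handles the generic locus and explains why the Hodge-theoretic machinery seems unavoidable: over a dense open $Y^\circ$ where $\tau$ is smooth, the filtration of $\Omega^q_X$ by $\tau^*\Omega^{\bullet}_{Y^\circ}$, together with the vanishing of $R^a\tau_*\Omega^b_{X/Y^\circ}$ whenever $a$ or $b$ exceeds the fibre dimension, gives the result there with the sharp bound $\dim Y^\circ+2f_{Y^\circ}\le d$. By the theorem on formal functions $R^p\tau_*\Omega^q_X$ is then supported on the closed locus where $\tau$ has fibres of dimension $\ge p$, and one would like to descend to $Y\setminus Y^\circ$ by induction on $\dim Y$; but $\tau^{-1}(Y\setminus Y^\circ)$ need not be smooth and $\Omega^q_X$ does not base change along the non-flat $\tau$, so passing to a resolution of the total space over $Y\setminus Y^\circ$ while keeping the dimension of the fibred product bounded by $d$ is itself delicate, which is precisely the point the decomposition-theorem argument above is designed to circumvent.
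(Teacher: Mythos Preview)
The paper gives no proof of this lemma: it is quoted from Kaledin's paper and the surrounding text explicitly says ``we refer to the original paper for complete proofs.'' There is therefore nothing in the paper itself to compare your argument against beyond the bare citation.

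Your plan---apply Saito's decomposition theorem to $\QQ^H_X[N]$, pass to $\gr^F_{-q}\dr$, and bound the contribution of each strict-support summand $\ic_Z(L_{i,Z})[-i]$ by $\dim Z+2f_Z\le d$---is the natural route and is essentially the shape of Kaledin's own argument. The reductions in your first two paragraphs (replacing $Y$ by $\tau(X)$, identifying $\gr^F_{-q}\dr(\QQ^H_X[N])$ with $\Omega^q_X[N-q]$, and bounding $\dim Z+2f_Z\le d$ via the inclusion $\tau^{-1}(Z)\times_Z\tau^{-1}(Z)\hookrightarrow X\times_Y X$) are all correct, and the blow-up calibration is on target.

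That said, what you have written is not a proof, and you say so yourself: the ``second input''---bounding the cohomological amplitude of $\gr^F_{-q}\dr(\ic_Z(L_{i,Z}))$ in terms of the Hodge level of $L_{i,Z}$---is precisely the substance of the lemma, and you leave it as ``the main obstacle.'' Your heuristic (that $L_{i,Z}$, being a summand of $H^{m}$ of an $f_Z$-dimensional fibre with $m=i+N-\dim Z$, has $\gr_F^p$ supported in $[\max(0,m-f_Z),\min(f_Z,m)]$) is correct and does control the complex over the open stratum $Z^\circ$. But this only constrains $\gr^F_{-q}\dr$ generically on $Z$; to finish one must also know that the Hodge filtration on the minimal extension acquires no new graded pieces over $Z\setminus Z^\circ$, which is a genuine (if standard) input from Saito's construction of $F_\bullet$ on intermediate extensions. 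Until that step is supplied and the indices are actually chased through to the inequality $a+N-i\le\dim Z+2f_Z$, the argument remains a sketch. Your closing paragraph correctly diagnoses why the elementary smooth-locus argument cannot be globalized by naive induction on $\dim Y$, but identifying that obstruction is not the same as overcoming it.
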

The following lemma is from the proof of \cite[Lem 2.13]{kaledin2006symplectic}, 
\begin{lem}\label{Kaledinlemma2.13}
    Let $X$ be a symplectic variety and $\pi: \tilde{X}\rightarrow X$ be a projective birational map with smooth symplectic $X$. Let $E_x=\pi^{-1}(x)$ be the fiber over a closed point $x\in X$. Then $H^p(\mathfrak{X},\Omega_\mathfrak{X}^q)=0$ whenever $p>q$, where $\mathfrak{X}$ is the formal completion of $\tilde{X}$ along $E_x$. 
\end{lem}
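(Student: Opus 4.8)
The plan is to work in the situation reached at the end of the proof of Theorem \ref{5.4}: $X$ is holomorphic symplectic of dimension $2n$ with a symplectic resolution $\pi\colon\tilde X\to X$, and the support condition $(\star)$ for $\ic_X$ has already been verified on $X\setminus X_m$, where $X_m$ denotes the (finite, zero-dimensional) lowest stratum. First I would observe that for $j\ge n$ one has $2j+2>2n$, so the inequality $\cd\supp\cH^j\cC_k\ge 2j+2$ on $X\setminus X_m$ forces $\supp\cH^j\cC_k\cap(X\setminus X_m)=\varnothing$; hence $\cH^j\cC_k$ is a skyscraper sheaf supported on $X_m$, and it suffices to show that the stalk $(\cH^j\cC_k)_x$ vanishes for each $x\in X_m$ and each $j\ge n$.

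The next step reduces this stalk vanishing to the geometry of the resolution. Since $\pi$ is semismall (\cite[Lemma 2.11]{kaledin2006symplectic}), $R\pi_*\QQ_{\tilde X}[2n]$ is a pure Hodge module concentrated in perverse degree $0$, and by the decomposition theorem $\ic_X$ is a direct summand of it; applying $\gr^F_{-k}\dr$ and using its compatibility with proper pushforward (Lemma \ref{fcommuDR}) together with Lemma \ref{smoothgrdrQ}, one obtains that $\cH^j\cC_k$ is a direct summand of $R^j\pi_*\Omega^k_{\tilde X}$. By the theorem on formal functions, $(R^j\pi_*\Omega^k_{\tilde X})^{\wedge}_x\cong H^j(\mathfrak X,\Omega^k_{\mathfrak X})$, where $\mathfrak X$ is the formal completion of $\tilde X$ along $E_x=\pi^{-1}(x)$ and $\Omega^k_{\mathfrak X}$ the corresponding sheaf of continuous K\"ahler differentials; since $\mathfrak m$-adic completion is faithful on finitely generated $\cO_{X,x}$-modules, it is enough to show $H^j(\mathfrak X,\Omega^k_{\mathfrak X})=0$ for $j\ge n$.

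Now Kaledin's two vanishing theorems do most of the work. Lemma \ref{Kaledinlemma2.13} gives $H^j(\mathfrak X,\Omega^k_{\mathfrak X})=0$ for $j>k$, hence $(\cH^j\cC_k)_x=0$ whenever $j>k$; and Lemma \ref{Kaledinlemma2.10}, together with $\dim(\tilde X\times_X\tilde X)=\dim\tilde X=2n$ by semismallness, gives $R^j\pi_*\Omega^k_{\tilde X}=0$ for $j+k>2n$, hence $\cH^j\cC_k=0$ whenever $j+k>2n$. Combining these with $\cC_k\in D^{\ge 0}_{\coh}(X)$, the stalk $(\cC_k)_x$ is concentrated in cohomological degrees $0,\dots,\min(k,2n-k)$. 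As $\min(k,2n-k)\le n$, with equality precisely when $k=n$, this yields $\cH^j\cC_k=0$ for all $j\ge n$ and all $k\ne n$, as well as $\cH^j\cC_n=0$ for $j>n$. The only remaining case is $\cH^n\cC_n$.

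The boundary case $\cH^n\cC_n=0$ is the main obstacle, and I would attack it via the self-duality of $\cC_n$. Since $X$ is Gorenstein with trivial canonical sheaf and $2n-n=n$, Lemma \ref{grdrdual} gives $\DD(\cC_n)\cong\cC_n$, i.e. $\cC_n\cong R\cH om_{\cO_X}(\cC_n,\cO_X)$. At this stage one knows that $(\cC_n)_x$ lies in degrees $[0,n]$, that $\cH^0\cC_n=\pi_*\Omega^n_{\tilde X}$ is reflexive and hence of depth $\ge 2$ at the codimension-$2n$ point $x$ (so $\operatorname{Ext}^i_{\cO_{X,x}}(\pi_*\Omega^n_{\tilde X},\cO_{X,x})=0$ for $i>2n-2$), and that $\cH^n\cC_n$ is a skyscraper at $x$. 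Feeding these inputs and the self-isomorphism into the hyperext spectral sequence $E_2^{p,q}=\operatorname{Ext}^p_{\cO_{X,x}}(\cH^{-q}(\cC_n)_x,\cO_{X,x})\Rightarrow\cH^{p+q}(\cC_n)_x$ and tracking the contributions to cohomological degree $n$ should force $\cH^n\cC_n=0$; it is convenient here to also invoke the reconstruction Proposition \ref{reconstructionProp4.7} on $X\setminus X_m$, which identifies $\cC_n|_{X\setminus X_m}$ with $\cIC_{X\setminus X_m}(\Omega^n_{X_{reg}})$. Carrying out this spectral-sequence bookkeeping carefully is the delicate point, and it is the one place where one simultaneously needs Kaledin's vanishing and the polarizability (self-duality) of $\ic_X$.
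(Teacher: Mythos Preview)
Your proposal does not prove Lemma \ref{Kaledinlemma2.13} at all; it is a proof sketch for Lemma \ref{singdim0} ($\cH^j\cC_k=0$ for $j\ge n$), and you explicitly \emph{invoke} Lemma \ref{Kaledinlemma2.13} as an input rather than establish it. The paper itself does not prove Lemma \ref{Kaledinlemma2.13} either: it is quoted from the proof of \cite[Lemma~2.13]{kaledin2006symplectic}, where the argument uses the contracting $\CC^*$-action on the formal neighbourhood and the strict positivity of weights on $\Omega^q_{\mathfrak X}$. None of that appears in your write-up.

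If your intended target was Lemma \ref{singdim0}, then your treatment of the cases $j>k$ and $j+k>2n$ is essentially the same as the paper's, but the critical case $j=k=n$ has a genuine gap. You propose to use self-duality $\DD(\cC_n)\cong\cC_n$ and the hyper-Ext spectral sequence, but this is circular: the skyscraper $\cH^n\cC_n$ contributes $E_2^{2n,-n}=\operatorname{Ext}^{2n}_{\cO_{X,x}}(\cH^n\cC_n,\cO_{X,x})\cong\cH^n\cC_n$ (Gorenstein local duality), which is perfectly compatible with $\cH^{n}(\cC_n)_x\cong\cH^n\cC_n$ and gives no contradiction. Reflexivity of $\cH^0\cC_n$ only kills $\operatorname{Ext}^i$ for $i>2n-2$, which is irrelevant to degree $n$, and Proposition \ref{reconstructionProp4.7} on $X\setminus X_m$ says nothing about the stalk at $x\in X_m$. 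The paper's actual argument for $j=k=n$ is different and uses a piece of information your approach discards: it keeps track of the \emph{other} summands in the decomposition $R^n\pi_*\Omega^n_{\tilde X}\cong\cH^n\cC_n\oplus H^{2n}(F_i,\CC)\oplus\cdots$, shows via the Hodge--de Rham spectral sequence on $\mathfrak X$ that $H^n(\mathfrak X,\Omega^n_{\mathfrak X})=H^{2n}_{\dr}(\mathfrak X)=H^{2n}(F_i,\CC)$, and concludes that the $\cH^n\cC_n$ summand must vanish. That comparison with the topological cohomology of the fibre is the missing idea.
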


\begin{proof}[Proof of the Lemma \ref{singdim0}]
    Consider the symplectic resolution $\pi: \tilde{X}\rightarrow X$. Let $X_{rel}\subset X_m$ be relevant strata in $X_m$, which in this case is just a discrete set of points $\{x_1, \dots, x_s\}$, associated to each point $\pi$ has dimension $n$ fiber $F_i=\pi^{-1}(x_i)$ for $1\le i\le s$. After truncation, the decomposition theorem for $\pi$ gives,
    \begin{equation}\label{eqlem5.2}
        R\pi_*\QQ_{\tilde{X}}[2n] \cong \ic_X\oplus \ic_{X_{rel}} (L_{X_{rel}})\oplus N\\
        \cong \ic_X\oplus(\bigoplus_{i=1}^s L_i) \oplus N
    \end{equation}
    where each $L_i$ is the skycraper sheaf on the point $x_i$ with stalk the same as cohomology of the fiber $H^{n}(F_i,\CC)$ as vector space, $N$ is a direct sum of perverse sheaves supported on positive dimensional relevant strata. As before, after taking $\cH^j$ and the graded de Rham complex to (\ref{eqlem5.2}) we have
    \begin{equation}\label{eq2lem5.2}
        R^j\pi_*\Omega^k_{\tilde{X}}\cong \cH^j\cC_k\oplus L_{j,k} \oplus \cH^j(\gr^F_{-k}\dr (N)[k-2n]),
    \end{equation}
    where $$L_{j,k}=\begin{cases}
        
    \bigoplus_{i=1}^s L_i \quad \text{if}\quad j+k=2n.
    \\  0 \quad \text{otherwise}.
    \end{cases}$$
    
    Let $x\in X_{sing}$ and take the formal completion at $x$, we have the following two cases:
    
    \textbf{Case 1}: If $x\in X_m\setminus X_{rel}$, then since the resolution $\pi$ is semismall, the fiber $\pi^{-1}(x)$ has dimension less than $n$. For dimension reason we have for $j\ge n$,
    $$\widehat{(R^j\pi_*\Omega_{\tilde{X}}^k)}_{x}=H^j(\fX,\Omega^k_{\fX})=0,$$
    where $\fX$ is the formal completion of $\tilde{X}$ along $\pi^{-1}(x)$. Thus $\cH^j\cC_k$ has no supports on $X_{sing}\setminus X_{rel}$.

    \textbf{Case 2}: Take $x=x_i\in X_{rel}$. Again for dimension reason we can assume $j=n$. We have $H^j(\fX_i,\Omega^k_{\fX_i})=0$ if $j=n>k$ according Lemma \ref{Kaledinlemma2.13}. If $k>n$, then $j+k=n+k>\dim(\tilde{X}\times_X\tilde{X})=2n$, thus $\widehat{(R^j\pi_*\Omega_{\tilde{X}}^k)}_{x_i}=0$ by Lemma \ref{Kaledinlemma2.10}. So we only need to deal with the case when $j=k=n$. Notice that the perverse sheaf $N$ is supported on positive dimensional relevant strata, over which the fibers of $\pi$ have dimension less than $n$, thus   $\cH^n(\gr^F_{-n}\dr(N)[-n])=0$. Take formal completion in equation (\ref{eq2lem5.2}), we have
    \begin{equation}\label{case2}H^n(\fX_i,\Omega^n_{\fX_i})=\widehat{(R^n\pi_*\Omega_{\tilde{X}}^n)}_{x_i}=\widehat{(\cH^n\cC_n)}_{x_i} \oplus L_i=\widehat{(\cH^n\cC_n)}_{x_i} \oplus H^{2n}(F_i,\CC).
    \end{equation}
    
    By the comparision of algebraic de Rham cohomology we have $H^{2n}(F_i,\CC)=H^{2n}_{\dr}(\fX)$. From the previous discussion we see when $p+q=2n$, $H^q(\fX,\Omega^p_{\fX})=0$ if $p\neq n$, $q\neq n$. 
    Now consider the spectral sequence of algebraic de Rham cohomology \cite[0FM6]{stacks-project}.
    \begin{equation}
        E_1^{p,q}=H^q(\fX,\Omega^p_{\fX}) \Rightarrow H^{p+q}_{DR}(\fX)
    \end{equation}
    with the following $E_1$-page:
    \begin{center}   
   \mybox{
    $$\begin{tikzcd}
             & 0                                                  & 0                                          &   \\
{} \arrow[r] & {H^n(\fX,\Omega^{n-1}_{\fX})=0} \arrow[r, "d_1"]   & {H^n(\fX,\Omega^n_{\fX})} \arrow[r, "d_1"] & 0 \\
{} \arrow[r] & {H^{n-1}(\fX,\Omega^{n-1}_{\fX})} \arrow[r, "d_1"] & {H^{n-1}(\fX,\Omega^n_{\fX})} \arrow[r]    & 0
\end{tikzcd}.$$} 
\end{center}

    Since for $p+q=2n$, there's only one non-trivial term $H^n(\fX,\Omega^n_{\fX})$, the associated spectral sequence degenerates and converges to $H^{2n}_{\dr}(\fX)=H^{2n}(F_i,\CC)$ at $E_1$-page, thus $H^n(\fX_i,\Omega^n_{\fX_i})=H^{2n}(F_i,\CC)$. Combined with (\ref{case2}) we see $\widehat{(\cH^n\cC_n)}_{x_i}=0=\cH^j\cC_k$. 

    In all, $\cH^j\cC_k$ vanishes and has no support for $j\ge n$.
\end{proof}

As a special case, when $X$ has only isolated singularities, we have the following corollary from the proof above,
\begin{cor}\label{suppisolated}
    Let $X$ be a holomorphic symplectic variety which admits symplectic resolutions. Suppose that $X$ has only isolated singularities, then $\ic_X$ is strongly coherent m-perverse.
\end{cor}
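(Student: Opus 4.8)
The plan is to reduce the claim that $\ic_X$ is strongly coherent $m$-perverse to a single vanishing statement, using that $X_{sing}$ is now a finite set of points and hence has codimension $2n$ in $X$. First I would note that over $X_{reg}=X\setminus X_{sing}$ the Hodge module $\ic_X$ restricts to $\QQ_{X_{reg}}[2n]$, whose shifted graded de Rham complex is concentrated in cohomological degree $0$ by Lemma \ref{smoothgrdrQ}; therefore $\supp\cH^j\cC_k\subseteq X_{sing}$ for every $j\ge 1$, where as before $\cC_k=\gr^F_{-k}\dr(\ic_X)[k-2n]$.

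Then I would split on the degree $j$. For $1\le j\le n-1$, if $\cH^j\cC_k\neq 0$ its support is a nonempty finite set, so $\cd\supp\cH^j\cC_k=2n\ge 2(j+1)$ since $j+1\le n$; thus the required inequality $\cd\supp\cH^j\cC_k\ge 2(j+1)$ holds in this range (vacuously when the cohomology sheaf vanishes). For $j\ge n$ a nonempty finite support gives only codimension $2n<2(j+1)$, so here $\ic_X$ being strongly coherent $m$-perverse amounts to the vanishing $\cH^j\cC_k=0$. This last vanishing is precisely the content of Lemma \ref{singdim0}, whose proof needs only the existence of a symplectic resolution $\pi\colon\tilde X\to X$ together with the fact that the lowest canonical stratum of $X$ is $0$-dimensional — exactly the isolated-singularities hypothesis — so it applies here unchanged (in fact it simplifies, since for isolated singularities the decomposition theorem for the semismall map $\pi$ has no summand supported on a positive-dimensional stratum). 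This also shows the corollary is self-contained, which is what is needed for it to serve as the base case of the induction in Theorem \ref{5.4}.

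For completeness I would recall the mechanism behind that vanishing. Semismallness of $\pi$ and the decomposition theorem exhibit $\cH^j\cC_k$ as a direct summand of $R^j\pi_*\Omega^k_{\tilde X}$; at a singular point $x$ one passes to the formal completion $\fX$ of $\tilde X$ along $\pi^{-1}(x)$. If $x$ is not relevant then $\dim\pi^{-1}(x)<n$, and Kaledin's vanishing (Lemma \ref{Kaledinlemma2.13}, with Lemma \ref{Kaledinlemma2.10} controlling $\dim(\tilde X\times_X\tilde X)$) kills $H^j(\fX,\Omega^k_\fX)$ for $j\ge n$; if $x=x_i$ is relevant the same two results reduce one to $j=k=n$, and the degeneration of the spectral sequence $E_1^{p,q}=H^q(\fX,\Omega^p_\fX)\Rightarrow H^{p+q}_{\dr}(\fX)$ along $p+q=2n$ identifies $H^n(\fX,\Omega^n_\fX)$ with $H^{2n}(F_i,\CC)=H^{2n}_{\dr}(\fX)$, forcing the extra skyscraper summand to be zero and hence $\widehat{(\cH^n\cC_n)}_{x_i}=0$. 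The genuinely substantive step is this last spectral-sequence comparison on the $0$-dimensional relevant strata; everything else is bookkeeping of summands from the decomposition theorem, and all the required geometric input (semismallness of $\pi$ and Kaledin's two vanishing lemmas) is already in hand.
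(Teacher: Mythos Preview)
Your proposal is correct and follows exactly the approach the paper intends: the paper states this corollary ``from the proof above'' without a separate argument, and what you have written is precisely the extraction of that argument from the proof of Theorem~\ref{5.4} together with Lemma~\ref{singdim0}, including the observation that in the isolated case the summand $N$ vanishes and that the corollary is logically self-contained so as to serve as the induction base. One small imprecision: for non-relevant singular points the paper's Case~1 uses only the fiber-dimension bound on higher direct images (as in \cite[02V7]{stacks-project}), not Kaledin's Lemmas~\ref{Kaledinlemma2.10}--\ref{Kaledinlemma2.13}; those enter only in Case~2 for relevant points.
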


Finally, as we have shown that $\ic_X$ is strongly coherent m-perverse, we can reconstruct $\cC_k$ from its restriction to the regular locus. We can also generalize to the case when the symplectic resolution only exists \'etale-locally, by the analytic-local nature of Hodge modules from its construction.

\begin{cor}\label{reconfromXCor5.8}
    Let $X$ be a holomorphic symplectic variety which \'etale-locally admits symplectic resolutions. Then $\ic_X$ is strongly coherent m-perverse. And we have 
    $$\gr_{-k}^F\dr(\ic_X(n))[k-n]=\cIC_X(\Omega^{n+k}_{X_{reg}}).$$
    In particular, the complex $\gr^F_{-k}\dr(\ic_X(n))$ is uniquely determined by its restrction to the smooth locus $j^*\cH^{k-n}\gr^F_{-k}\dr(\ic_X(n))=\Omega^{n+k}_{X_{reg}}$.
\end{cor}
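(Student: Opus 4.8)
The plan is to bootstrap Theorem~\ref{5.4} from the global hypothesis to the étale-local one, and then invoke Corollary~\ref{reconstructionwhenicsemiperverse}. The entire point is that all the objects involved are étale-local (indeed analytic-local). First I would record the relevant base-change compatibilities: if $g\colon U\to X$ is étale, then $g^*\ic_X\cong\ic_U$ as pure Hodge modules on $U$ (the shift by $\dim U-\dim X$ being zero), since both are the intersection complex Hodge module of the common smooth open locus; and the graded de Rham functor commutes with $g^*$ on the level of complexes of coherent sheaves. Hence $g^*\cC_k\cong\cC_k^U:=\gr^F_{-k}\dr(\ic_U)[k-2n]$, and because $g$ is flat, $g^*\cH^j\cC_k\cong\cH^j\cC_k^U$ for all $j$. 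This is exactly the analytic-local nature of mixed Hodge modules already used in the proof of Theorem~\ref{5.4}, and if one prefers it can be phrased in the analytic topology, where $\ic_X^{\mathrm{an}}$ is visibly local.

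Next I would observe that the support condition~(\ref{codimsupp}) can be checked on an étale cover. An étale morphism is flat with discrete fibres, so for any closed subset $Z\subset X$ and any $x$ in the image of $g$ one has $\cd_U g^{-1}(Z)=\cd_X Z$ near $x$; and if $\{g_\alpha\colon U_\alpha\to X\}$ is an étale cover, then $\supp\cH^j\cC_k$ is the union of the images $g_\alpha(\supp\cH^j\cC_k^{U_\alpha})$, so $\cd\supp\cH^j\cC_k\ge 2j+2$ holds on $X$ if and only if $\cd\supp\cH^j\cC_k^{U_\alpha}\ge 2j+2$ holds on every chart. By hypothesis each point of $X$ has an étale neighborhood $g\colon U\to X$ with $U$ admitting a symplectic resolution; Theorem~\ref{5.4} applied to $U$ gives $\cd\supp\cH^j\cC_k^U\ge 2(j+1)$ for $j\ge 1$. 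Gluing over a cover, the same bound holds on $X$, i.e. $\ic_X$ is strongly coherent m-perverse.

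Finally, with $\ic_X$ strongly coherent m-perverse, Corollary~\ref{reconstructionwhenicsemiperverse} applies verbatim: it yields $\gr^F_{-k}\dr(\ic_X(n))[k-n]=\cIC_X(\Omega^{n+k}_{X_{reg}})$, together with the reconstruction statement that $\gr^F_{-k}\dr(\ic_X(n))$ is determined by $j^*\cH^{k-n}\gr^F_{-k}\dr(\ic_X(n))=\Omega^{n+k}_{X_{reg}}$. Note that $X$ has klt, hence rational Gorenstein, singularities, so it admits a dualizing complex and the hypotheses of that corollary are in force; also $X$ is normal, so the regular locus has complement of codimension $\ge 2$, which is what makes $\cIC_X(\Omega^{n+k}_{X_{reg}})$ well defined.

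The main obstacle is really the bookkeeping for the base-change compatibilities in the first paragraph: one must ensure that under an étale map $\ic_X$ pulls back to $\ic_U$ itself, and not to some other Hodge-module extension of the regular-locus data, and that $\gr^F_{-k}\dr$ commutes with étale (or analytic) pullback as a functor to $D^b_{coh}$, not merely after taking cohomology sheaves. Both are standard consequences of Saito's formalism—étale and analytic localization being built into the very definition of Hodge modules on singular spaces, and $\gr^F\dr$ being defined by a local formula via the Spencer resolution—but they deserve to be stated explicitly, and it is cleanest to do so after passing to the analytic site, consistently with how Theorem~\ref{5.4} was proved.
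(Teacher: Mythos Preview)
Your proposal is correct and follows essentially the same approach as the paper: reduce the support condition to \'etale charts admitting symplectic resolutions, invoke Theorem~\ref{5.4} there, and then apply Corollary~\ref{reconstructionwhenicsemiperverse}. The paper's own proof is a single sentence invoking the \'etale-local nature of $\ic_X$, whereas you have spelled out the base-change compatibilities and the gluing of codimension bounds; this extra care is appropriate, and your citation of Theorem~\ref{5.4} is in fact more accurate than the paper's reference to Corollary~\ref{suppisolated}, which appears to be a slip.
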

\begin{proof}
    In this case, $\ic_X$ is strongly coherent m-perverse by Corollary \ref{suppisolated}, thus we can apply Corollary \ref{reconstructionwhenicsemiperverse} and conclude since the Hodge module $\ic_X$ is \'etale-local.
\end{proof}

\begin{rem}
    We remark that the existence of local symplectic resolutions does not guarantee the existence of the global symplectic resolution. See \cite[Example 3.2 (4)]{bakker2022global}.
\end{rem}

Finally, we point out that we cannot expect $\ic_X$ to be m-perverse coherent if $X$ doesn't admit symplectic resolutions, even in the case when $X$ has only isolated singularities. 

\begin{eg}
To illustrate the failure of $\ic_X$ being m-perverse coherent, consider an isolated symplectic singularity $(X,x)$ of dimension $2n$ which is an affine cone of $Y$. Let $f:Z\rightarrow X$ be the resolution of the cone singularity $(X,x)$. As in the proof above, by taking the graded de Rham of the decomposition theorem, we obtain
$$Rf_*\Omega_{Z}^k\simeq \gr^F_{-k}\dr(\ic_X)\oplus \gr^F_{-k}\dr(N)$$
where $N$ a direct sum of perverse sheaves (up to shift) supported at the cone point $x$. 

If $\ic_X$ were m-perverse coherent, then $\cH^j\gr^F_{-k}\dr(\ic_X)=0$ when $j>n$. As shown in the proof of Lemma \ref{singdim0}, the stalk of $\cH^j\gr\dr(\ic_X)=0$ at $x$ is the difference between $H^*(f^{-1}(x),\CC)$ and the algebraic de Rham cohomology $H^*(\mathfrak{X},\Omega_{\mathfrak{X}}^k)$ of the formal scheme along $f^{-1}(x)$ . The non-degeneracy of the spectral squence and the non-vanishing of $H^*(\mathfrak{X},\Omega_{\mathfrak{X}}^k)$ obstruct the vanishing of $\cH^j\gr^F_{-k}\dr(\ic_X)$ for $j>n$ in general, contradicting the assumption that $\ic_X$ is m-perverse coherent.
\end{eg}

\subsection{Relative symplectic Hard Lefschetz theorem}

In this section we establish symplectic relative Hard Lefschetz theorem for Lagrangian fibrations of holomorphic symplectic varieties which admit symplectic resolutions, partially generalizing the main result in \cite{schnell2023hodge} to the singular case. 

Consider a Lagrangian fibration $f:M\rightarrow B$ of a holomorphic symplectic variety $M$. Suppose $M$ admits a symplectic resolution $\pi:\tilde{M}\rightarrow M$. The composition $g=f\circ \pi:\tilde{M}\rightarrow B$ is again a Lagrangian fibration.

From previous discussions we see that $\ic_M$ is strongly coherent m-perverse, namely
$$
        \cd \supp \cH^j\gr^F_{-k}\dr(\ic_M)[k-2n]\ge 2j+2
   $$
    for $j\ge 1$.

We have the decomposition theorem for $\pi, f$ and $g$,

\begin{equation}\label{decompi}
    R\pi_*\QQ_{\tilde{M}}(n)[2n]=\ic_M(n)\oplus N, \quad N=\bigoplus_{S_\alpha} \ic_{S_\alpha}(L_\alpha)(n)
\end{equation}
\begin{equation}\label{decomf}
    Rf_*\ic_M(n)=\bigoplus_{i=-n}^n\cP_{i}[-i].
\end{equation}
\begin{equation}
    Rf_*\ic_{S_\alpha}(L_\alpha)(n)=\bigoplus_{i=-n_\alpha}^{n_\alpha} \cP_{\alpha,i}[-i]
\end{equation}
\begin{equation}\label{decomg}
    Rg_*\QQ_{\tilde{M}}(n)[2n]=Rf_*\ic_M(n)\oplus Rf_*N=\bigoplus_{i=-n}^n\cP_{i}[-i]\oplus Rf_*N.
\end{equation}
\begin{equation}
    Rg_*\QQ_{\tilde{M}}(n)[2n]=\bigoplus_{i=-n}^n\tilde{\cP}_{i}[-i].
\end{equation}
where $L$ consists of pure Hodge modules with proper supports. We use sub-index $\alpha$ to denote those objects from the proper supports $S_\alpha$.

In particular we have,
\begin{align*}
    \tilde{\cP_i}=\cH^i(Rg_*\QQ_{\tilde{M}}(n)[2n])=\cH^i(\bigoplus_{i=-n}^n\cP_{i}[-i]\oplus Rf_*N)=\cP_i\oplus (\bigoplus_{\alpha} \cP_{\alpha,i})
\end{align*}
Recall that we have the perverse-Hodge complexes for $f$ 
$$
    G_{i,k}=\gr^F_{-k}\dr(\cP_i)[-i]
$$
$$
    G_{\alpha,i,k}=\gr^F_{-k}\dr(\cP_{\alpha,i})[-i]
$$
and respectively for $g$
$$
    \tilde{G}_{i,k}=\gr^F_{-k}\dr(\tilde{\cP}_i)[-i]=\gr^F_{-k}\dr(\cP_i)[-i]\oplus (\bigoplus_{\alpha} \gr^F_{-k}\dr(\cP_{\alpha,i})[-i])=G_{i,k}\oplus (\bigoplus_{\alpha} G_{\alpha,i,k})
$$

Let us discuss the action of symplectic forms on those complexes. The perverse-coherent property of $\gr^F_{-k}\dr(\ic_X(n))$ allows us to reconstruct it from the regular locus $M_{reg}$. We will show next that the extension of symplectic forms from $M_{reg}$ acts on the perverse-Hodge complexes. 

Now consider a closed, non-degenerate holomorphic symplectic form $\sigma_{reg}\in H^0(M_{reg},\Omega^2_{M_{reg}})$.

\begin{thm}[Symplectic Hard Lefschetz]\label{wholeisomsigma}
    Let $\sigma\in H^0(M,\Omega_M^{[2]})$ be the reflexive symplectic form which is an extension of a non-degenerate symplectic form $\sigma_{reg}$ on the regular locus. Suppose $M$ has symplectic resolutions \'etale-locally. Then it satisfies symplectic Hard Lefschetz theorem for $\ic_M$, i.e
    \begin{equation*}
        \sigma^k:\gr^F_k\dr(\ic_M(n))[-k-n]\rightarrow \gr^F_{-k}\dr(\ic_M(n))[k-n].
    \end{equation*}
    is an isomorphism. Furthermore, it induces an isomorphism in the derived category $D^b_{coh}(B)$
    \begin{equation*}
        \varphi_{\sigma^k}:\bigoplus_{i=-n}^{n-k}G_{i,-k}\rightarrow \bigoplus_{i=-n+k}^nG_{i,k}[2k]
    \end{equation*}
\end{thm}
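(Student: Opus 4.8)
The plan is to first establish the pointwise symplectic Hard Lefschetz isomorphism for $\ic_M$ and then upgrade it to the statement about perverse-Hodge complexes via Lemma \ref{fcommuDR} and the decomposition theorem. The starting observation is that, by Corollary \ref{reconfromXCor5.8}, since $M$ admits symplectic resolutions \'etale-locally, the complex $\gr^F_{-k}\dr(\ic_M(n))[k-n]$ equals $\cIC_M(\Omega^{n+k}_{M_{reg}})$ and is in particular uniquely determined by its restriction $\Omega^{n+k}_{M_{reg}}$ to the regular locus, sitting in cohomological degree $0$ of that restriction. Wedging with powers of $\sigma_{reg}$ gives, on $M_{reg}$, the classical symplectic Hard Lefschetz isomorphism $\sigma_{reg}^k : \Omega^{n-k}_{M_{reg}} \xrightarrow{\sim} \Omega^{n+k}_{M_{reg}}$ (for $0\le k\le n$), which is an isomorphism of locally free sheaves there because $\sigma_{reg}$ is non-degenerate. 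The first step is therefore to promote this to a morphism $\gr^F_k\dr(\ic_M(n))[-k-n] \to \gr^F_{-k}\dr(\ic_M(n))[k-n]$ in $D^b_{coh}(M)$: since both sides are the $\cIC$-extensions of their (locally free) restrictions to $M_{reg}$, and $\cIC_M(-)$ is functorial on locally free sheaves on the open set, the isomorphism $\sigma_{reg}^k$ extends uniquely; this is exactly the "extension property" advertised in \S 4 and the reconstruction statement in Corollary \ref{reconstructionwhenicsemiperverse}/\ref{reconfromXCor5.8}. Because $\cIC_M$ of an isomorphism is an isomorphism, the global map $\sigma^k$ is an isomorphism on all of $M$.

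Next I would push this forward along $f$. By Lemma \ref{fcommuDR}, $Rf_*\gr^F_{-k}\dr = \gr^F_{-k}\dr\, Rf_*$, and feeding the decomposition (\ref{decomf}) $Rf_*\ic_M(n)=\bigoplus_i \cP_i[-i]$ through $\gr^F_{-k}\dr$ gives
\begin{equation*}
    Rf_*\bigl(\gr^F_{-k}\dr(\ic_M(n))[k-n]\bigr) = \bigoplus_{i=-n}^n \gr^F_{-k}\dr(\cP_i)[-i][k-n] = \bigoplus_{i=-n}^n G_{i,k}[k-n],
\end{equation*}
and similarly with $-k$ in place of $k$ one gets $\bigoplus_i G_{i,-k}[-k-n]$. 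Applying $Rf_*$ to the isomorphism $\sigma^k$ from the first step yields an isomorphism $\bigoplus_i G_{i,-k}[-k-n] \xrightarrow{\sim} \bigoplus_i G_{i,k}[k-n]$ in $D^b_{coh}(B)$; shifting by $[k+n]$ puts this in the form $\bigoplus_i G_{i,-k} \xrightarrow{\sim} \bigoplus_i G_{i,k}[2k]$. To obtain the stated index ranges I need the vanishing $G_{i,-k}=0$ for $i>n-k$ and $G_{i,k}=0$ for $i<-n+k$; this is the coherent shadow of the usual support/cosupport estimate for the perverse pieces $\cP_i$, which follows because $\gr^F_{-k}\dr(\cP_i)$ vanishes once $-k$ is outside the range of the Hodge filtration of $\cP_i$, itself controlled by the relative symplectic Hard Lefschetz on $M_{reg}$ (equivalently, by Theorem \ref{5.4} applied fiberwise). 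So the surviving summands are precisely $-n\le i\le n-k$ on the left and $-n+k\le i\le n$ on the right, giving the displayed $\varphi_{\sigma^k}$.

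The main obstacle, and the step that needs the most care, is the very first one: constructing the morphism $\sigma^k$ globally and checking it is an isomorphism of complexes, not merely of the degree-$0$ cohomology on $M_{reg}$. The point is that $\gr^F_{k}\dr(\ic_M(n))[-k-n]$ and $\gr^F_{-k}\dr(\ic_M(n))[k-n]$ are genuine complexes with higher cohomology supported on $M_{sing}$, and a priori wedging with $\sigma$ is only defined on $M_{reg}$; one must invoke the strong m-perversity of $\ic_M$ (Theorem \ref{5.4}), which via Proposition \ref{reconstructionProp4.7} says each side is $\cIC_M$ of its restriction, and then use that $\operatorname{Hom}$ in $D^b_{coh}(M)$ between two such $\cIC$-complexes is computed on $M_{reg}$ — i.e. $\cIC_M$ is fully faithful on locally free sheaves in codimension $\ge 2$. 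This is where the perverse-coherent $t$-structure machinery of \S 4 is essential; once the map exists and restricts to the non-degenerate $\sigma_{reg}^k$ on $M_{reg}$, its cone is a complex supported on $M_{sing}$ and annihilated by restriction to $M_{reg}$, and the $\cIC$-characterization forces the cone to vanish, so $\sigma^k$ is an isomorphism. A secondary technical point is compatibility with the \'etale-local nature of everything: since Hodge modules, graded de Rham, and $\cIC_M(-)$ are all analytic-local, the construction and the isomorphism property can be checked on an \'etale cover where symplectic resolutions exist, which is exactly the hypothesis. After that, the pushforward step and the index bookkeeping are routine.
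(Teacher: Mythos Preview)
Your argument is essentially the paper's: identify both graded de Rham complexes with $\cIC_M(\Omega^{n\pm k}_{M_{reg}})$ via Corollary \ref{reconfromXCor5.8}, transport the classical isomorphism $\sigma_{reg}^k$ through the functor $\cIC_M$, then push forward along $f$ using Lemma \ref{fcommuDR} and the decomposition (\ref{decomf}). Your elaboration of why $\cIC_M$ sends the isomorphism $\sigma_{reg}^k$ to a global isomorphism (the cone argument, full faithfulness on locally free sheaves in codimension $\ge 2$) is more explicit than the paper, which simply writes ``Applying $\cIC_M$'', but the content is the same.

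The one place where your justification goes astray is the index range $G_{i,-k}=0$ for $i>n-k$ and $G_{i,k}=0$ for $i<-n+k$. This has nothing to do with ``Theorem \ref{5.4} applied fiberwise'' (which concerns support codimensions on $M$, not Hodge filtration ranges on $B$) or with any relative symplectic Hard Lefschetz, and invoking the latter here would be circular. The paper simply cites \cite[\S44, Lemma]{schnell2023hodge}: the vanishing comes from the a priori range of the Hodge filtration on $\ic_M(n)$, which under Saito's pushforward constrains $F_\bullet\cP_i$ so that $\gr^F_{\mp k}\dr(\cP_i)=0$ outside the stated range of $i$. You correctly identified the vanishing that is needed; only the reason you gave for it is wrong.
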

\begin{proof}
    Since the restriction of $\sigma$ on $M_{reg}$ is a symplectic form on $M_{reg}$, the k-th wedge $\sigma^k_{reg}:\Omega^{n-k}_{M_{reg}}\rightarrow \Omega^{n+k}_{M_{reg}}$ induces an isomorphism for $1\le k\le n$. Applying $\mathcal{IC}_M$ and using Corollary \ref{reconfromXCor5.8}, it gives the following commutative diagram:
    $$\begin{tikzcd}
    \cIC_M(\Omega^{n-k}_{M_{reg}}) \arrow[r, "\simeq", no head] \arrow[d, "\sigma^k"] & {\gr^F_{k}\dr(\ic_M(n))[-k-n]} \arrow[d] \\
    \cIC_M(\Omega^{n+k}_{M_{reg}}) \arrow[r, "\simeq", no head]                       & {\gr^F_{-k}\dr(\ic_M(n))[k-n]}          
    \end{tikzcd}.                            
    $$
    where the vertical arrows are isomorphisms. Taking push-forward $Rf_*$ on the isomorphism $$\sigma^k:\gr^F_{k}\dr(\ic_M(n))[-k-n]\rightarrow\gr^F_{-k}\dr(\ic_M(n))[k-n],$$ and by the compatibility of the graded de Rham with proper push-forward, we get the isomorphism after the truncation
    \begin{equation*}
        \varphi_{\sigma^k}:\bigoplus_{i=-n}^{n-k}G_{i,-k}\rightarrow \bigoplus_{i=-n+k}^nG_{i,k}[2k]
    \end{equation*}
    where the range of the summation is controlled by \cite[\S44. Lemma]{schnell2023hodge}.
\end{proof}

Notice that we didn't use the existence of the symplectic resolution in the proof above. In fact we have
\begin{prop}\label{semiperversesymphardlef}
    Let $M$ be a holomorphic symplectic variety. Suppose that $\ic_X$ is strongly coherent m-perverse. Then the symplectic Hard Lefschetz theorem induced by the reflexive symplectic form holds. 
\end{prop}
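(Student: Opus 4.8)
The plan is to run the proof of Theorem~\ref{wholeisomsigma} again, observing that the existence of a symplectic resolution entered there only in order to invoke the reconstruction statement Corollary~\ref{reconfromXCor5.8}. Once $\ic_M$ is assumed strongly coherent m-perverse, that corollary may be replaced by Corollary~\ref{reconstructionwhenicsemiperverse} directly, and the rest of the argument goes through unchanged.

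Concretely, I would first restrict to the regular locus $j\colon M_{reg}\hookrightarrow M$, whose complement has codimension $\ge 2$ since $M$ is normal. There $\ic_M(n)|_{M_{reg}}=\QQ_{M_{reg}}(n)[2n]$, so Lemma~\ref{smoothgrdrQ} gives $\gr^F_{-k}\dr(\ic_M(n))[k-n]|_{M_{reg}}=\Omega^{n+k}_{M_{reg}}$ for all $k$, and the non-degenerate closed form $\sigma_{reg}$ yields isomorphisms $\wedge\sigma_{reg}^k\colon \Omega^{n-k}_{M_{reg}}\xrightarrow{\ \sim\ }\Omega^{n+k}_{M_{reg}}$ for $1\le k\le n$.

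Since $\ic_M$ is strongly coherent m-perverse, Corollary~\ref{reconstructionwhenicsemiperverse} identifies
\[
\gr^F_{-k}\dr(\ic_M(n))[k-n]\cong\cIC_M(\Omega^{n+k}_{M_{reg}})\quad\text{and}\quad \gr^F_{k}\dr(\ic_M(n))[-k-n]\cong\cIC_M(\Omega^{n-k}_{M_{reg}}).
\]
Applying the intersection-complex functor $\cIC_M(-)$ — built from the truncations $\tau_{\le\bullet}$ and Grothendieck duality $\DD$, hence a functor carrying isomorphisms to isomorphisms — to $\wedge\sigma_{reg}^k$ produces an isomorphism $\cIC_M(\Omega^{n-k}_{M_{reg}})\xrightarrow{\ \sim\ }\cIC_M(\Omega^{n+k}_{M_{reg}})$. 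Transporting it through the identifications above yields the desired isomorphism
\[
\sigma^k\colon \gr^F_{k}\dr(\ic_M(n))[-k-n]\xrightarrow{\ \sim\ }\gr^F_{-k}\dr(\ic_M(n))[k-n],
\]
whose restriction to $M_{reg}$ is $\wedge\sigma_{reg}^k$.

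The only point requiring care — and where the hypothesis is really used — is to see that this map is genuinely the action of the reflexive symplectic form $\sigma\in H^0(M,\Omega^{[2]}_M)$ and not merely some extension. Here one notes that both source and target lie in $D^{\ge 0}_{coh}(M)$ with $\cH^0$ locally free along $M_{reg}$, so by the reconstruction Proposition~\ref{reconstructionProp4.7} a morphism between two such complexes is determined by its restriction to $M_{reg}$; since wedging with $\sigma$ restricts on $M_{reg}$ to $\wedge\sigma_{reg}^k$, it must coincide with $\cIC_M(\wedge\sigma_{reg}^k)$ and is therefore an isomorphism. This mild verification is carried out exactly as in the proof of Theorem~\ref{wholeisomsigma}, and no symplectic resolution — global or \'etale-local — enters the argument.
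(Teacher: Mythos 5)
Your proposal is correct and follows essentially the same route as the paper: the paper's own proof simply observes that the argument for Theorem~\ref{wholeisomsigma} never uses the symplectic resolution beyond establishing the strongly coherent m-perverse property, so one invokes Corollary~\ref{reconstructionwhenicsemiperverse} in place of Corollary~\ref{reconfromXCor5.8} and repeats the argument verbatim. Your additional remark on why the resulting isomorphism is genuinely the action of the reflexive form (uniqueness of extensions via the reconstruction) is a worthwhile explicit verification that the paper leaves implicit.
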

\begin{proof}
    This simply follows from the proof of Theorem \ref{wholeisomsigma}. 
\end{proof}



Now we consider the cohomology class $[\tilde{\sigma}]\in H^2(\tilde{M},\CC)$ associated with the symplectic form $\tilde{\sigma}\in \Gamma(\tilde{M},\Omega^2_{\tilde{M}})$.

Since $\pi:\tilde{M}\rightarrow M$ is a symplectic resolution, we can find a non-degenerate symplectic form $\tilde{\sigma}\in \Gamma(\tilde{M},\Omega^2_{\tilde{M}})$ which is the extension of $\sigma_{reg}$ from $M_{reg}$, and satisfies $\tilde{\sigma}=\pi^*\sigma$. By the decomposition theorem, $\pi_*\tilde{\sigma}$ acts on $R\pi_*\QQ_{\tilde{M}}[2n]$. Pushing-forward along $f$ induces the following morphism
$$
    \tilde{\sigma}:\bigoplus_{i=-n}^{n}\tilde{G}_{i,-k}\rightarrow \bigoplus_{i=-n}^n\tilde{G}_{i,-k+2}[2] 
$$
and also the Deligne's decomposition for $\tilde{\sigma}$
$$
        \tilde{\sigma}_1: \tilde{G}_{i,-k}\rightarrow \tilde{G}_{i+1,-k+2}[2].
$$
Its k-th iterated morphism 
$$
        \tilde{\sigma}_1^k: \tilde{G}_{i,-k}\rightarrow \tilde{G}_{i+k,k}[2k] 
$$
induces
$$
    \mu_{i,-k}:G_{i,-k}\rightarrow G_{i+k,k}[2k].
$$

\begin{lem}\label{ictoN}
    Let $\pi:\tilde{X}\rightarrow X$ be a symplectic resolution of a symplectic variety $X$ of dimension $2n$. Consider the decomposition theorem for $\pi$:
    \begin{equation}
        R\pi_*\CC_{\tilde{X}}[2n]\cong \ic_X\oplus N.
    \end{equation}
    where $N$ is supported on the singular locus of $X$. Let $[\tilde\sigma]=[\pi^*\sigma]\in H^2(\tilde{X},\CC)$ be the cohomology class of the symplectic form on $\tilde{X}$. Then the composition morphism induced by $\tilde\sigma=\pi^*\sigma$
    \begin{equation}
        \pi_*\pi^*\sigma: \ic_X\hookrightarrow \ic_X\oplus N \xrightarrow{\pi_*\pi^*\sigma} \ic_X[2]\oplus N[2]\rightarrow N[2].
    \end{equation}
    vanishes.
    \end{lem}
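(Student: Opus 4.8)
The plan is to show that the component
\[
c\colon \ic_X\hookrightarrow \ic_X\oplus N\xrightarrow{\ \pi_*\pi^*\sigma\ }\ic_X[2]\oplus N[2]\twoheadrightarrow N[2]
\]
vanishes in $D^b_c(X)$, equivalently that each of its components $c_\alpha\colon\ic_X\to\ic_{S_\alpha}(L_\alpha)[2]$ vanishes, where $N=\bigoplus_\alpha\ic_{S_\alpha}(L_\alpha)$. Since $\pi$ is a symplectic resolution it is semismall, so $R\pi_*\CC_{\tilde X}[2n]$ is a perverse sheaf, the decomposition $\ic_X\oplus N$ is one of perverse sheaves, and $\supp c_\alpha$ is contained in the support of $\ic_{S_\alpha}(L_\alpha)$, hence in $X_{sing}$. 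Since the class $[\tilde\sigma]$ and the morphism $\pi_*\pi^*\sigma$ are analytic-local, the statement may be checked \'etale-locally on $X$. (Note that pure perversity is not enough here: the Hom-group $\operatorname{Hom}_{D^b_c(X)}(\ic_X,N[2])$ is in general nonzero, so one really has to use the specific nature of $\tilde\sigma$.)

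The only geometric input is that $[\tilde\sigma]$ restricts to $0$ on each fibre of $\pi$: since $\tilde\sigma=\pi^*\sigma$ and $\pi$ is constant on $\pi^{-1}(x)$, the form $\tilde\sigma$ pulls back to $0$ there — this is the isotropy of the fibres of a symplectic resolution, cf.\ \cite{kaledin2006symplectic} — so $[\tilde\sigma]\vert_{\pi^{-1}(x)}=0$ in $H^2(\pi^{-1}(x),\CC)$ for every $x\in X$. By proper base change $i_x^*(\pi_*\pi^*\sigma)$ is cup product with $[\tilde\sigma]\vert_{\pi^{-1}(x)}$ on $R\Gamma(\pi^{-1}(x),\CC)[2n]$, hence $i_x^*(\pi_*\pi^*\sigma)=0$, and therefore $i_x^*c_\alpha=0$, for all $x\in X$ and all $\alpha$.

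I would then prove $c_\alpha=0$ by induction on $\dim X$. If $X$ is smooth, $N=0$. If $X$ has a single isolated singularity at $p$, then $N$ is a skyscraper $W\otimes\delta_p$ concentrated in degree $0$, and the adjunction $(i_p^*,i_{p*})$ gives
\[
\operatorname{Hom}_{D^b_c(X)}\!\bigl(\ic_X,\,W\otimes\delta_p[2]\bigr)\cong\operatorname{Hom}_{\CC}\!\bigl(\cH^{-2}(i_p^*\ic_X),\,W\bigr),
\]
under which $c_\alpha$ corresponds to $\cH^{-2}(i_p^*c_\alpha)$; since $i_p^*c_\alpha$ is obtained from $i_p^*(\pi_*\pi^*\sigma)=0$ by pre- and post-composition with restriction maps, it vanishes, so $c_\alpha=0$. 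For the inductive step, fix a relevant stratum $S_\alpha$ with generic point $\eta_\alpha$. By the local product decomposition (Theorem \ref{weinsteindecomp}) and Proposition \ref{propslicehassymplecticresolution}, an analytic neighbourhood of $\eta_\alpha$ is $(Y,y)\times(V,\eta_\alpha)$ with $V$ smooth and $Y$ a symplectic variety admitting a symplectic resolution whose deepest stratum is the point $y$; under this, $\pi$ becomes $\pi_Y\times\mathrm{id}_V$, $\ic_X\cong\ic_Y\boxtimes\CC_V[\dim V]$ and $\ic_{S_\alpha}(L_\alpha)\cong(W_\alpha\otimes\delta_y)\boxtimes\CC_V[\dim V]$, and the $V$-summand of the symplectic form acts through the $\ic_Y$-factor, contributing nothing to $c_\alpha$. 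Hence near $\eta_\alpha$, $c_\alpha$ is the exterior product with $\mathrm{id}_{\CC_V[\dim V]}$ of the $(W_\alpha\otimes\delta_y)[2]$-component of the pushed-forward cup product on $Y$, which vanishes by the isolated case applied to $(Y,y)$. Thus $\supp c_\alpha\subsetneq S_\alpha$; repeating the reduction at the generic point of each irreducible component of $\supp c_\alpha$ — which lies in a deeper stratum, whose transverse slice has strictly smaller dimension and whose induced symplectic form is again the pullback of a reflexive symplectic form along a symplectic resolution — and invoking the induction hypothesis gives $\supp c_\alpha=\varnothing$.

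The step I expect to be the main obstacle is precisely this globalization: the stalk vanishing of $\pi_*\pi^*\sigma$ only directly kills $c_\alpha$ where the target is supported at a point, so the vanishing has to be propagated along the singular strata via the Kaledin product structure, and one must check at every stage the external-product identifications of $\ic$-sheaves and cup-product actions used above, and that each transverse slice is a symplectic variety whose symplectic resolution carries the pullback of that slice's reflexive symplectic form — so that the inductive hypothesis genuinely applies.
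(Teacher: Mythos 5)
Your strategy has a genuine gap at its foundation: you propose to verify the vanishing of $c_\alpha$ \'etale-locally (stratum by stratum, in Kaledin product neighbourhoods) and then conclude it vanishes globally, but morphisms in $D^b_c(X)$ are not determined by their restrictions to an open or \'etale cover. The group $\operatorname{Hom}_{D^b_c(X)}(\ic_X,N[2])$ carries a local-to-global spectral sequence $H^p(X,\mathcal{E}xt^q(\ic_X,N))\Rightarrow \operatorname{Ext}^{p+q}(\ic_X,N)$, and a class that dies in $H^0(X,\mathcal{E}xt^2)$ --- i.e.\ is zero on a cover --- can perfectly well be nonzero, coming from $H^1(\mathcal{E}xt^1)$ or $H^2(\mathcal{E}xt^0)$. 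Your own example shows the danger: $i_x^*(\pi_*\pi^*\sigma)=0$ for \emph{every} $x$ (on $X_{reg}$ the stalk map is $H^0(\mathrm{pt})\to H^2(\mathrm{pt})=0$), yet $\pi_*\pi^*\sigma$ is certainly not the zero morphism, since over $X_{reg}$ it is cup product with $[\sigma_{reg}]\in H^2(X_{reg},\CC)$. You correctly flag ``globalization'' as the main obstacle, but you offer no mechanism to get past it; showing $c_\alpha$ vanishes on the complement of ever deeper strata only shows it factors through $i_{Z*}i_Z^!N[2]$ for $Z$ closed, and one would still need an argument that the relevant $\operatorname{Hom}$-group vanishes. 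A secondary gap: your inductive step needs the resolution itself to trivialize as $\pi_Y\times\mathrm{id}_V$ over the Kaledin neighbourhood, together with the compatible splitting of the decomposition-theorem summands and of the cup-product action; Theorem \ref{weinsteindecomp} and Proposition \ref{propslicehassymplecticresolution} give a product structure on $X$ and a symplectic resolution of the slice, but not a product structure on $\tilde X$ over it. (Also, passing from ``the fibres are isotropic as subvarieties'' to ``$[\tilde\sigma]$ restricts to zero in $H^2$ of the possibly singular compact fibre'' requires a weight/strictness argument you omit.)

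The paper's proof takes a completely different, global route: by the extension theorem $\sigma=\pi_*\pi^*\sigma$, and by de Cataldo--Migliorini (Remark 4.4.3 of \cite{de2005hodge}, as used in \cite[Proposition 3.6]{de2022hitchin_fibration_ab_surf_P=W}) the cup-product action of $\pi^*\sigma$ on $R\pi_*\CC_{\tilde X}[2n]$ is \emph{diagonal} with respect to the decomposition by supports, acting on each summand $\ic_{\overline{S_i}}(L_i)$ through $\sigma|_{S_i}$; the off-diagonal component $\ic_X\to N[2]$ therefore vanishes for structural reasons, with no local-to-global passage needed. Your geometric input (isotropy of the fibres forcing stalkwise vanishing) is the right intuition for \emph{why} the statement is true, but to turn it into a proof you would need either the dCM diagonality statement or a genuine global argument replacing it.
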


\begin{proof} 
We follow the argument in \cite[Proposition 3.6]{de2022hitchin_fibration_ab_surf_P=W}. By extension theorem of differential forms Lemma \ref{extensionform}, we see that $\sigma=\pi_*\pi^*\sigma$.  By \cite[Remark 4.4.3]{de2005hodge}, the action of $\pi^*\sigma$ on $R\pi_*\CC_{\tilde{X}}[2n]\cong \ic_X\oplus N$ is a direct sum of actions by classes $\sigma_i=\sigma|_{S_i}$ on each summand $\ic_{\overline{S_i}}(L_i)$, where $S_i$ are relevant strata. In particular, the action $\pi_*\pi^*\sigma:\ic_X\rightarrow N[2]$ vanishes.
\end{proof}

Applying the above lemma to perverse-Hodge complexes yields the following corollary
\begin{cor}\label{compatible}
    We have $\tilde{\sigma}_1^k|_{G_{i,-k}}=\mu_{i,-k}$.
\end{cor}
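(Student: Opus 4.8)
The plan is to reduce the statement to Lemma~\ref{ictoN} by chasing the two direct sum decompositions already in play. Recall that the operator $\tilde\sigma$ on $\bigoplus_i\tilde G_{i,-k}$ is produced by applying $Rf_*$ and then $\gr^F_{-k}\dr$ (using Lemma~\ref{fcommuDR}) to the action $\pi_*\pi^*\sigma$ on $R\pi_*\QQ_{\tilde M}(n)[2n]=\ic_M(n)\oplus N$ of \eqref{decompi}. Under these operations the splitting $\ic_M(n)\oplus N$ becomes $\bigoplus_i\tilde G_{i,-k}=\bigoplus_i\bigl(G_{i,-k}\oplus\bigoplus_\alpha G_{\alpha,i,-k}\bigr)$, where the $G$-part is $\gr^F_{-k}\dr$ of Saito's decomposition \eqref{decomf} and the $G_\alpha$-part is $\gr^F_{-k}\dr$ of $Rf_*N$. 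Thus it suffices to prove that $\tilde\sigma_1^k$ carries the summand $G_{i,-k}$ into $G_{i+k,k}[2k]$ with no component landing in any $G_{\alpha,i+k,k}[2k]$; by the definition of $\mu_{i,-k}$ as the map induced by $\tilde\sigma_1^k$ on these summands, that is precisely the assertion $\tilde\sigma_1^k|_{G_{i,-k}}=\mu_{i,-k}$.

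To prove this, I would first feed Lemma~\ref{ictoN} into the picture above. That lemma says that the composite $\ic_M(n)\hookrightarrow\ic_M(n)\oplus N\xrightarrow{\pi_*\pi^*\sigma}\ic_M(n)[2]\oplus N[2]\twoheadrightarrow N[2]$ vanishes. Since $Rf_*$ and $\gr^F_{-k}\dr$ are additive exact functors sending the zero morphism to the zero morphism, applying them shows that the induced composite $\bigoplus_i G_{i,-k}\hookrightarrow\bigoplus_i\tilde G_{i,-k}\xrightarrow{\tilde\sigma}\bigoplus_i\tilde G_{i,-k+2}[2]\twoheadrightarrow\bigoplus_{i,\alpha}G_{\alpha,i,-k+2}[2]$ also vanishes; equivalently, $\tilde\sigma$ maps the subcomplex $\bigoplus_i G_{i,-k}$ into the subcomplex $\bigoplus_i G_{i,-k+2}[2]$. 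Now I decompose $\tilde\sigma=\sum_r\tilde\sigma_r$ into its Deligne components $\tilde\sigma_r\colon\tilde G_{i,-k}\to\tilde G_{i+r,-k+2}[2]$. Because the subsum $\bigoplus_i G_{i,-k}$ respects the grading by the perverse index $i$ (it is the image of $\bigoplus_i\cP_i[-i]$ under $\gr^F_{-k}\dr$), the restriction $\tilde\sigma_r|_{G_{i,-k}}$ is just the part of $\tilde\sigma|_{G_{i,-k}}$ landing in the summand of index $i+r$; by the previous step that part lands in $G_{i+r,-k+2}[2]$ and not merely in $\tilde G_{i+r,-k+2}[2]$. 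In particular $\tilde\sigma_1(G_{i,-k})\subseteq G_{i+1,-k+2}[2]$, and iterating $k$ times gives $\tilde\sigma_1^k(G_{i,-k})\subseteq G_{i+k,k}[2k]$, establishing $\tilde\sigma_1^k|_{G_{i,-k}}=\mu_{i,-k}$.

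The only non-formal ingredient is Lemma~\ref{ictoN}; the rest is bookkeeping with additive functors and the two bigradings. The step I expect to require the most care in the write-up is the passage from the vanishing of the \emph{total} component $\bigoplus_i G_{i,-k}\to\bigoplus_{i,\alpha}G_{\alpha,i,-k+2}[2]$ of $\tilde\sigma$ to the vanishing of the $G_{i,-k}\to\bigoplus_\alpha G_{\alpha,i+r,-k+2}[2]$ component of each individual Deligne piece $\tilde\sigma_r$, and hence of $\tilde\sigma_1$: this uses that the summand $Rf_*\ic_M(n)$, equivalently $\bigoplus_i G_{i,\bullet}$, is itself split according to the perverse degree $i$, so that the Deligne decomposition of the ambient operator restricts to it componentwise.
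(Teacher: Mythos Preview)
Your proof is correct and follows the same approach as the paper: apply Lemma~\ref{ictoN} with $\tilde X=\tilde M$, $X=M$, then push forward along $f$ and take graded de Rham. The paper's own proof is a single sentence leaving all the bookkeeping implicit; you have spelled out the step the paper omits, namely why the vanishing of the off-diagonal component $\ic_M(n)\to N[2]$ forces each Deligne piece $\tilde\sigma_r$ (and hence $\tilde\sigma_1^k$) to preserve the $G$-summand, which is exactly the point that needed checking.
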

\begin{proof}
    In Lemma \ref{ictoN}, we take $\tilde{X}$ (resp. $X$) to be $\tilde{M}$ (resp. $M$). Pushing-forward along the Lagrangian fibration $f$, the corollary follows immediately.
\end{proof}

Now we can deduce the relative symplectic Hard Lefschetz theorem in the case when $M$ has symplectic resolutions.
\begin{thm}[Relative symplectic Hard Lefschetz theorem]\label{symprelHLsection5}
    Let $M$ be a holomorphic symplectic variety admitting a symplectic resolution and $[\sigma]$ the reflexive symplectic form. Then the action of $[\sigma_M]$ induces an isomorphism 
    \begin{equation}\label{symprelHL}
       \mu_{i,-k} : G_{i,-k}\rightarrow G_{i+k,k}[2k].
    \end{equation}
    for every $k\ge 1$.
\end{thm}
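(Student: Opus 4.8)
The plan is to pull the statement up to the symplectic resolution $\pi:\tilde M\to M$, where the relative symplectic Hard Lefschetz theorem is available by Schnell, and then push the conclusion back down using that the class $[\pi^*\sigma]$ acts diagonally on the summands of the decomposition theorem for $\pi$. First observe that $g=f\circ\pi:\tilde M\to B$ is again a Lagrangian fibration of a \emph{smooth} holomorphic symplectic variety: $\tilde\sigma=\pi^*\sigma$ is an honest non-degenerate holomorphic $2$-form since $\pi$ is a symplectic resolution, and a general fibre of $g$ is isomorphic to a general (hence Lagrangian, $n$-dimensional) fibre of $f$ because $\pi$ is birational. Thus Schnell's relative symplectic Hard Lefschetz theorem \cite{schnell2023hodge} applies to $g$ and $\tilde\sigma$: the Deligne component $\tilde\sigma_1$ of the Lefschetz operator attached to $\tilde\sigma$ is defined, and its iterates
\[
\tilde\sigma_1^k:\tilde G_{i,-k}\longrightarrow \tilde G_{i+k,k}[2k]
\]
are isomorphisms for every $k\ge 1$.

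Next I would record that $\tilde\sigma_1^k$ is compatible with the splitting $\tilde G_{i,k}=G_{i,k}\oplus\bigoplus_\alpha G_{\alpha,i,k}$ induced by $R\pi_*\QQ_{\tilde M}(n)[2n]\cong\ic_M(n)\oplus N$. By \cite[Remark 4.4.3]{de2005hodge}, which is the input for Lemma \ref{ictoN}, the action of $\pi^*\sigma$ on $\ic_M(n)\oplus N$ is the direct sum of the actions of the classes $\sigma|_{S_\alpha}$ on the summands $\ic_{S_\alpha}(L_\alpha)(n)$, so it has no off-diagonal component in either direction between $\ic_M(n)$ and $N$. Pushing forward along $f$ (which commutes with $\gr^F_{-k}\dr$ by Lemma \ref{fcommuDR}) and passing to Deligne decompositions, an operation which, being canonical, is compatible with this $\tilde\sigma$-stable direct sum, each summand inheriting its own relative Hard Lefschetz as a direct summand of that for $g$, one would find that $\tilde\sigma_1$, hence each $\tilde\sigma_1^k$, splits as a direct sum of morphisms along $\tilde G_{\bullet,\bullet}=G_{\bullet,\bullet}\oplus\bigoplus_\alpha G_{\alpha,\bullet,\bullet}$, whose $G$-to-$G$ part is $\mu_{i,-k}$ by Corollary \ref{compatible}. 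Since a direct summand of an isomorphism in a triangulated category is again an isomorphism (its cone is a direct summand of the zero object), the isomorphism $\tilde\sigma_1^k=\mu_{i,-k}\oplus(\text{contribution of }N)$ would then force
\[
\mu_{i,-k}:G_{i,-k}\longrightarrow G_{i+k,k}[2k]
\]
to be an isomorphism for every $k\ge 1$.

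The most delicate point should be the middle one: showing that the canonical Deligne splitting of the $\tilde\sigma$-Lefschetz operator on $Rg_*\QQ_{\tilde M}(n)[2n]$ restricts to the $\tilde\sigma$-stable summand $Rf_*\ic_M(n)$ and there recovers the operator producing $\mu_{i,-k}$. This rests on the uniqueness of Deligne's splitting together with the two-sided off-diagonal vanishing of \cite[Remark 4.4.3]{de2005hodge} (equivalently, Lemma \ref{ictoN} and its Grothendieck dual), and is the one place where the geometry of the symplectic resolution, via De Cataldo--Migliorini's analysis of the action of pulled-back classes, genuinely enters. A purely intrinsic proof by induction on the canonical stratification, in the spirit of Theorem \ref{5.4}, seems more cumbersome, since the Deligne decomposition would then have to be tracked \'etale-locally.
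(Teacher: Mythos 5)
Your proposal is correct and follows essentially the same route as the paper: apply Schnell's relative symplectic Hard Lefschetz theorem to $g=f\circ\pi$ on the resolution, use Lemma \ref{ictoN} (via \cite[Remark 4.4.3]{de2005hodge}) to see that $\tilde\sigma_1^k$ respects the splitting $\tilde G_{i,-k}=G_{i,-k}\oplus\bigoplus_\alpha G_{\alpha,i,-k}$, and conclude that the summand $\mu_{i,-k}$ of the isomorphism $\tilde\sigma_1^k$ is itself an isomorphism. Your extra care about the compatibility of the Deligne splitting with the direct-sum decomposition is exactly the point the paper delegates to Corollary \ref{compatible}.
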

\begin{proof}
     The morphism
    $$
        \tilde{\sigma}_1^k: G_{i,-k}\oplus (\bigoplus_{\alpha} G_{\alpha,i,-k})=\tilde{G}_{i,-k}\rightarrow \tilde{G}_{i+k,k}[2k]=G_{i+k,k}[2k]\oplus (\bigoplus_{\alpha} G_{\alpha,i+k,k}[2k]).
    $$
    is an isomorphism by the relative symplectic hard Lefschetz theorem (cf. \cite[\S15. Theorem]{schnell2023hodge}).
    By Lemma \ref{ictoN}, the action of $\tilde{\sigma}^k_1$ splits into a direct sum of the action on each summand. Thus the restriction of $\tilde{\sigma}^k_1$ on $G_{i,-k}$
    \begin{equation}
        \tilde{\sigma}^k_1|_{G_{i,-k}}=\mu_{i,-k}:G_{i,-k}\rightarrow G_{i+k,k}[2k]
    \end{equation}
    is an isomorphism.
\end{proof}

\subsection{Symmetry of perverse-Hodge complexes}

As a consequence of Hard Lefschetz theorem Theorem \ref{Saitorelhardlef}, by taking the graded de Rham complexes, the relative Hard Lefschetz theorem gives isomorphisms
\begin{equation}
    \ell^i: G_{-i,k}\rightarrow G_{i,i+k}[2i]
\end{equation}
for every $i\ge 1$ and $k\in \ZZ$.

\begin{thm}\label{sympP=H}
    Let $f:M\rightarrow B$ be a Lagrangian fibration of a holomorphic symplectic variety $M$ of dimension $2n$. If $M$ admits a symplectic resolution, then we have the following quasi-isomorphism of perverse-Hodge complexes in the derived category $D^b_{coh}(B)$,
    \begin{equation}
        G_{i,k}\cong G_{k,i}.
    \end{equation}
\end{thm}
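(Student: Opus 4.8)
The plan is to imitate Schnell's argument in the smooth case, replacing $\QQ_M[2n]$ by $\ic_M$ and using that, by Theorem \ref{5.4}, $\ic_M$ is strongly coherent m-perverse so that the reflexive symplectic form acts and satisfies both the relative symplectic Hard Lefschetz theorem (Theorem \ref{symprelHLsection5}) and the ordinary relative Hard Lefschetz theorem (Theorem \ref{Saitorelhardlef}, applied to $g=f\circ\pi$ or directly to $f$). Concretely, one has on the bigraded collection $\{G_{i,k}\}$ two commuting $\liesl_2$-type actions in the derived category: the operator $\ell$ coming from an $f$-ample class on $M$ (or its resolution), which by relative Hard Lefschetz gives isomorphisms $\ell^i:G_{-i,k}\xrightarrow{\sim}G_{i,i+k}[2i]$, and the operator $\mu$ coming from the reflexive symplectic form, which by Theorem \ref{symprelHLsection5} gives isomorphisms $\mu_{i,-k}:G_{i,-k}\xrightarrow{\sim}G_{i+k,k}[2k]$. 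The symmetry $G_{i,k}\cong G_{k,i}$ should then be extracted from the combinatorics of these two $\liesl_2$-actions, exactly as in \cite{schnell2023hodge}.

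First I would set up the change of indices so that the two families of isomorphisms become the raising/lowering operators of a single $\liesl_2\times\liesl_2$ (or the diagonal in a bigger algebra). Following Schnell, introduce a reindexed bigraded object, say place $G_{i,k}$ at lattice point depending on $i+k$ and $i-k$ (up to the relevant shifts $[2i]$, $[2k]$), so that $\ell$ moves along one axis and $\mu$ along the other. The relative Hard Lefschetz isomorphisms $\ell^i$ say the $\ell$-action is "Lefschetz" along its axis; the relative symplectic Hard Lefschetz isomorphisms $\mu^k$ say the same for $\mu$. Because both are genuine isomorphisms in $D^b_{coh}(B)$ (not merely on cohomology sheaves), one gets an honest action of two commuting copies of the Lefschetz $\liesl_2$ on the finite bigraded family, and the Weyl-group symmetry of such a representation — swapping the two $\liesl_2$-weights — is precisely the reflection $(i,k)\mapsto(k,i)$. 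Spelling this out gives $G_{i,k}\cong G_{k,i}$ up to a shift, and one checks the shift is zero with the conventions in the Definition of $G_{i,k}$ (the Tate twist $(n)$ was inserted precisely to make this clean, as noted in the Remark after that Definition).

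The one genuinely new point compared to \cite{schnell2023hodge} is that $M$ is singular, so I must make sure the symplectic operator $\mu$ really is defined and really does what Schnell's symplectic operator does. This is handled by the material already in §5: by Corollary \ref{reconfromXCor5.8} each $\gr^F_{-k}\dr(\ic_M(n))$ is the intersection complex $\cIC_M(\Omega^{n+k}_{M_{reg}})$, hence determined by its restriction to $M_{reg}$, where $\sigma$ is an honest symplectic form; Theorem \ref{wholeisomsigma} upgrades wedging with $\sigma$ to an isomorphism of these intersection complexes, and pushing forward by $f$ and using Lemma \ref{fcommuDR} gives the operator $\mu$ on the $G_{i,k}$. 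Corollary \ref{compatible} identifies this $\mu$ with the restriction to the $\ic_M$-summand of the symplectic operator $\tilde\sigma_1$ upstairs on $\tilde M$, where Schnell's theorem applies verbatim; Lemma \ref{ictoN} guarantees the action splits compatibly with the decomposition $R\pi_*\QQ_{\tilde M}(n)[2n]=\ic_M(n)\oplus N$, so no cross-terms interfere.

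The main obstacle I anticipate is purely bookkeeping: keeping the three sets of indices — the perverse degree $i$, the Hodge degree $k$, and the various cohomological shifts $[2i]$, $[2k]$ — consistent so that the two Lefschetz isomorphisms genuinely assemble into commuting $\liesl_2$'s with the reflection acting as $(i,k)\leftrightarrow(k,i)$, and verifying the total shift vanishes. There is also a mild subtlety that the operators $\ell$ and $\mu$ are defined only in the derived category, so "$\liesl_2$-action" has to be interpreted as in \cite{schnell2023hodge} (a collection of isomorphisms satisfying the Hard Lefschetz identities), and one must check the two commute — but this follows because $\ell$ comes from a class pulled back from $B$ while $\mu$ comes from $\sigma$, which already commute at the level of differential forms on $M_{reg}$, hence on the intersection complexes by the reconstruction in Corollary \ref{reconfromXCor5.8}. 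Once the framework is in place the symmetry falls out formally, so the proof will be short, citing \cite[\S\S44--45]{schnell2023hodge} for the representation-theoretic extraction.

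\begin{proof}
    Since $M$ admits a symplectic resolution $\pi:\tilde M\to M$, Theorem \ref{5.4} shows that $\ic_M$ is strongly coherent m-perverse. By Corollary \ref{reconfromXCor5.8}, for every $k$ we have $\gr^F_{-k}\dr(\ic_M(n))[k-n]=\cIC_M(\Omega^{n+k}_{M_{reg}})$, so wedging with the reflexive symplectic form $\sigma\in H^0(M,\Omega^{[2]}_M)$ gives, by Theorem \ref{wholeisomsigma}, isomorphisms $\sigma^k:\gr^F_{k}\dr(\ic_M(n))[-k-n]\xrightarrow{\sim}\gr^F_{-k}\dr(\ic_M(n))[k-n]$. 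Pushing forward along $f$ and using Lemma \ref{fcommuDR} together with Deligne's decomposition yields the operators
    \begin{equation*}
        \mu_{i,-k}:G_{i,-k}\rightarrow G_{i+k,k}[2k],
    \end{equation*}
    and by Corollary \ref{compatible} and Theorem \ref{symprelHLsection5} each $\mu_{i,-k}$ is an isomorphism for $k\ge 1$. On the other hand, applying $\gr^F_{-k}\dr$ to the relative Hard Lefschetz isomorphisms of Theorem \ref{Saitorelhardlef} for $f$ (with $\ell$ the first Chern class of an $f$-ample line bundle on $M$) gives isomorphisms
    \begin{equation*}
        \ell^i:G_{-i,k}\rightarrow G_{i,i+k}[2i]
    \end{equation*}
    for every $i\ge 1$ and $k\in\ZZ$.

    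Thus the finite bigraded family $\{G_{i,k}\}_{-n\le i,k\le n}$ in $D^b_{coh}(B)$ carries two commuting Lefschetz-type $\liesl_2$-actions: the operator $\ell$ raises $i$ and $k$ by the same amount (with a shift), while the operator $\mu$ raises $i$ and lowers $k$ (with a shift), and by Theorems \ref{Saitorelhardlef} and \ref{symprelHLsection5} both satisfy the Hard Lefschetz isomorphism property. The two operators commute because $\ell$ is pulled back from $B$ while $\mu$ is induced by $\sigma$, and these commute already as operations on $\Omega^\bullet_{M_{reg}}$, hence on the intersection complexes $\cIC_M(\Omega^{\bullet}_{M_{reg}})$ by the uniqueness in Corollary \ref{reconfromXCor5.8}, and therefore on the $G_{i,k}$ after pushforward. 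By the representation theory of $\liesl_2\times\liesl_2$ — carried out exactly as in \cite[\S\S44--45]{schnell2023hodge} — the Weyl element exchanging the two $\liesl_2$-weights induces an isomorphism matching the summand in bidegree $(i,k)$ with the summand in bidegree $(k,i)$. With the indexing conventions fixed by the Tate twist $(n)$ in the definition of $G_{i,k}$, the accompanying cohomological shift is zero, and we obtain
    \begin{equation*}
        G_{i,k}\cong G_{k,i}
    \end{equation*}
    in $D^b_{coh}(B)$.
\end{proof}
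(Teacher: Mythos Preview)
Your assembly of ingredients is exactly the paper's: Theorem \ref{5.4} for strong coherent m-perversity, Theorem \ref{wholeisomsigma} and Theorem \ref{symprelHLsection5} for the symplectic Hard Lefschetz isomorphisms $\mu_{i,-k}$, the ordinary relative Hard Lefschetz for $\ell^i$, and Lemma \ref{ictoN}/Corollary \ref{compatible} to see that everything restricts cleanly to the $\ic_M$-summand. Where you diverge is the final extraction, and the paper's is far simpler than yours: it writes down the single composite
\[
G_{i,k}\xrightarrow{\;\ell^{-i}\;}G_{-i,k-i}[-2i]\xrightarrow{\;\mu_{-i,k-i}\;} G_{-k,i-k}[-2k]\xrightarrow{\;\ell^{k}\;} G_{k,i},
\]
and observes that each arrow is an isomorphism by the two Hard Lefschetz theorems already in hand. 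No commutativity of $\ell$ and $\mu$, no $\liesl_2\times\liesl_2$, no representation theory.

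Your extraction, by contrast, has a genuine gap. First, the description of the operators is off: from the formulas in \S\ref{mainthm} a single $\ell$ sends $G_{i,k}$ to $G_{i+2,k+1}[2]$ and a single $\sigma_1$ sends $G_{i,k}$ to $G_{i+1,k+2}[2]$; neither ``raises $i$ and $k$ by the same amount'' nor ``raises $i$ and lowers $k$'', and in particular they do not act along independent axes, so the bigrading is not that of an $\liesl_2\times\liesl_2$-representation in the naive sense you set up. Second, even granting an $\liesl_2\times\liesl_2$-action, its Weyl group is $(\ZZ/2\ZZ)^2$, acting by independent sign changes on the two weights; there is \emph{no} Weyl element that exchanges the two factors, hence none that implements $(i,k)\leftrightarrow(k,i)$. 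The three-step composite above is really a longest-word computation in a rank-two Weyl group where the two simple reflections do \emph{not} commute, not a swap of commuting $\liesl_2$'s. Your commutativity argument is therefore both unnecessary for the conclusion and, as you yourself note, not obviously well-posed in $D^b_{coh}(B)$. Replace your last paragraph by the displayed composite and the proof is complete.
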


\begin{proof}
    By Theorem \ref{symprelHLsection5} and the relative Hard Lefschetz theorem, the following composition is an isomorphism in the derived category $D^b_{coh}(B)$
    $$
        G_{i,k}\xrightarrow{\ell^{-i}}G_{-i,k-i}[-2i]\xrightarrow{\mu_{-i,k-i}} G_{-k,i-k}[-2k]\xrightarrow{\ell^k} G_{k,i}.
    $$
    Therefore we have $G_{i,k}\cong G_{k,i}$.
\end{proof}

\subsection{Numerical "Perverse=Hodge"}\label{numP=Hsection}
In the compact case, on the level of global cohomology, the symmetry of perverse-Hodge complexes gives rise to an equality of perverse numbers and the usual Hodge numbers. 

Let first recall the definition. For a possibly singular compact holomorphic symplectic variety with a Lagrangian fibration $f:M\rightarrow B$, the intersection cohomology $IH^*(M,\CC)$ carries a pure Hodge structure. We can consider the associated Hodge numbers
$$ Ih^{i,j}(M)=h^{i,j}(IH^*(M,\CC))\in \ZZ, \quad \text{for all $i,j\in \NN$}.$$

Meanwhile, the perverse t-structure associated with the Lagrangian fibration $f$ induces a perverse filtration on the intersection cohomology $IH^*(M,\CC)$
$$
    P_0IH^*(M,\CC)\subset P_1IH^*(M,\CC)\subset \cdots P_kIH^*(M,\CC)\subset \cdots IH^*(M,\CC).
$$

We can define the following invariants called the perverse numbers
$$
    \pnum^{i,j}(f)=\dim \operatorname{Gr}_i^P IH^{i+j}(M,\CC)=\dim(P_iIH^{i+j}(M,\CC)/P_{i-1}IH^{i+j}(M,\CC)).
$$

Using the symmetry of perverse-Hodge complexes, we now reprove the numerical result \cite{felisetti2022intersection}.
\begin{thm}[\cite{felisetti2022intersection}, Theorem 0.4]\label{numP=H}
    Let $f:M\rightarrow B$ be a Lagrangian fibration of compact holomorphic symplectic variety $M$ of dimension $2n$ which admits a symplectic resolution, then we have
    \begin{equation}
        \pnum^{i,j}(f)=Ih^{i,j}(M).
    \end{equation}
\end{thm}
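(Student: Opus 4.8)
The plan is to deduce the numerical identity from the sheaf-level symmetry $G_{i,k} \cong G_{k,i}$ of Theorem \ref{sympP=H} by passing to global hypercohomology over $B$. First I would recall how the perverse and Hodge invariants of $M$ are recovered from the $G_{i,k}$. On the one hand, the perverse filtration on $IH^*(M,\CC)$ is induced by the decomposition $Rf_*\ic_M(n) = \bigoplus_i \cP_i[-i]$, so $\operatorname{Gr}^P_{i+n} IH^{\bullet}(M,\CC)$ is the hypercohomology of the perverse summand $\cP_{i}$ (up to the Tate twist and the shift bookkeeping introduced in the remark after the definition of $G_{i,k}$), and hence $\pnum^{i,j}(f)$ is computed by $\sum_k \dim \HH^{\bullet}(B, G_{i-n, k})$ for the appropriate total degree. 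On the other hand, since $\ic_M$ underlies a pure Hodge module, the Hodge filtration on $IH^{\bullet}(M,\CC)$ is strict, so $Ih^{p,q}(M) = \dim \HH^{q - (\text{shift})}(B, \gr^F_{-p}\dr(Rf_*\ic_M(n)))$, and using Lemma \ref{fcommuDR} together with the decomposition this breaks up as $\sum_i \dim \HH^{\bullet}(B, \gr^F_{-k}\dr(\cP_i)[-i]) = \sum_i \dim \HH^{\bullet}(B, G_{i,k})$. So both $\pnum^{i,j}(f)$ and $Ih^{i,j}(M)$ are, after matching indices, sums over one index of $\dim \HH^{\bullet}(B, G_{a,b})$ holding the other index fixed.

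The second step is the indexing reconciliation. I would carefully track the Tate twist $(n)$ and the shifts $[-i]$, $[k-n]$ relative to the convention of \cite{shen2023perverse-Hodgecomplex} (recall $\cG_{i,k} = G_{i-n,k-n}$), so as to pin down exactly which bidegree $(i,j)$ on the $M$ side corresponds to which $(a,b)$ on the $G$ side. The upshot should be a clean statement: $\pnum^{i,j}(f) = \sum_{k} \dim \HH^{i+j}(B, G_{i-n,k-n}[\ast])$ and $Ih^{i,j}(M) = \sum_{\ell} \dim \HH^{i+j}(B, G_{\ell, j-n}[\ast])$, where the external sum runs over the complementary index. This is where a sign or off-by-one is most likely to creep in, so I would double-check it against the smooth compact case treated in \cite{shen2022topology, shen2023perverse-Hodgecomplex}, where the corresponding identity is known.

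The final step is then immediate: apply the symmetry $G_{i,k} \cong G_{k,i}$ in $D^b_{coh}(B)$ from Theorem \ref{sympP=H}, which gives $\dim \HH^{\bullet}(B, G_{i,k}) = \dim \HH^{\bullet}(B, G_{k,i})$ term by term, so the sum computing $\pnum^{i,j}(f)$ is carried to the sum computing $Ih^{i,j}(M)$ after the index swap. One should also note that compactness of $M$ is used to guarantee finite-dimensionality of all the hypercohomology groups and the degeneration/strictness properties that make the perverse and Hodge filtrations behave well (purity of $IH^*(M)$, strictness of the Hodge filtration on $\HH^*$ of a mixed Hodge module), and that the existence of a symplectic resolution enters only through Theorem \ref{sympP=H}.

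\textbf{Main obstacle.} I expect the genuine content to be not the symmetry input — that is handed to us — but the bookkeeping translating the sheaf-theoretic symmetry into the stated numerical equality: correctly identifying $\gr^F_{-k}\dr$ of the global $IH^*$ with $\bigoplus_i \HH^{\bullet}(B,G_{i,k})$ (this uses that the decomposition theorem is filtered, i.e. compatible with $F$ and with $\gr\dr$, which follows from Saito's theory but should be cited precisely), and getting all the twists and shifts consistent so that the two index-sums are literally each other's transposes. The Hodge-theoretic inputs (purity of intersection cohomology of a compact variety, $E_1$-degeneration of the Hodge-de Rham spectral sequence for a pure Hodge module) are standard but need to be invoked explicitly.
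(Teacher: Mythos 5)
Your proposal follows essentially the same route as the paper: both express $\pnum^{i,j}(f)$ as the sum over the Hodge index $k$ of $\dim H^{i+j}(B,G_{i,k})$ (reassembling the full de Rham complex of the fixed perverse summand $\cP_i$), express $Ih^{i,j}(M)$ as the sum over the perverse index of $\dim H^{i+j}(B,G_{k,i})$ (reassembling $\gr^F_{-i}\dr(Rf_*\ic_M(n))$), and then apply the term-by-term symmetry $G_{i,k}\cong G_{k,i}$ of Theorem \ref{sympP=H}. The bookkeeping and strictness issues you flag are real but are handled implicitly (and tersely) in the paper as well, so there is no substantive divergence.
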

\begin{proof}
    Applying the global cohomology to the symmetry, we have $H^{i+j}(B,G_{i,k})\simeq H^{i+j}(G_{k,i})$. Taking the direct sum with respect to the index $k$, we have
    $$
        \bigoplus_{k=-n}^n \dim H^{i+j}(B,G_{i,k})=\bigoplus_{k=-n}^n \dim H^{i+j}(B,\gr^F_{-k}\dr(\cP_i)[-i])=\dim H^{i+j}(B,\cP_i[-i]\otimes \CC).
    $$
    concerning perverse numbers. By the symmetry the direct sum above equals to 
    $$
        \bigoplus_{k=-n}^n \dim H^{i+j}(B,G_{k,i})=\bigoplus_{k=-n}^n \dim H^{i+j}(B,\gr^F_{-i}\dr(\cP_k[-k]))=\dim H^{i+j}(B,\gr^F_{-i}\dr(Rf_*\ic_M(n))
    $$
    concerning the Hodge numbers. This shows that
    $$
    \pnum^{i,j}(f)=Ih^{i,j}(M).
    $$
\end{proof}

\section{Singular Higgs moduli space}

In this section we apply our results to Hitchin fibrations of singular Higgs moduli space (without coprime condition). The results in this section may not be new and should be known to experts in the field.

The singular Higgs moduli space has symplectic singularities, and it admits symplectic resolution if and only if the base curve has genus 1, or the base curve has genus 2 and the rank $n=2$, see \cite{tirelli2019symplectic}. A remarkable feature of singular Higgs moduli spaces (over reduced Hitchin base) is that they are locally modeled on hypertoric quiver varieties \cite{hausel2002toric}. 

In the case of singular Higgs moduli space $M(n,d)$ over reduced curve with the Hitchin fibration $\chi(n,d)$. It is shown that locally it admits an embedding to the compactified Jacobian $\overline{J}_{d,\cB}$ of the spectral curves. Then we have the following result on the structure of the local model from the deformation theory of the Higgs bundles, we refer the readers to \cite{mauri2022hodge} for more details.

\begin{thm}[\cite{mauri2022hodge}, Theorem 7.4] (Local model of $\overline{J}_{d,\cB}$ and $M(n,d)$)\label{localmodelhiggs}
Let $\overline{J}_{d,\cB}$ be the compactified Jacobian. There exists an analytic neighborhood $V$ of an \'etale chart of $\overline{J}_{d,\cB}$ centered at $(C_a,\cI)$ such that the restriction to $V$ of the fibre product square as follows:
\begin{equation}
\begin{tikzcd}
{M(n,d)\supset U} \arrow[r, hook] \arrow[d, "{\chi(n,d)}", two heads] & {V\subset \overline{J}_{d,\cB}} \arrow[d, "\pi", two heads] \\
{\chi(n,d)(U)} \arrow[r, "f^\times", hook]                            & \pi(V)                                                     
\end{tikzcd}\end{equation}
with $U:=V\cap \chi(n,d)^{-1}(A_a)$, is locally isomorphic to 
\begin{equation}
    \begin{tikzcd}
    Y(\Gamma_{\underline{n}},0)\times \CC^a \arrow[r, "{(\tau_M,\ell_1)}", hook] \arrow[d, "{(\chi_{\Gamma_{\underline{n}}},\ell_2)}", two heads] & {X(\Gamma_{\underline{n}^\pm},0)\times \CC^b} \arrow[d, "{(\pi_{\Gamma_{\underline{n}}},\ell_4)}", two heads] \\
    \CC^{b_1(\Gamma_{\underline{n}})}\times \CC^c \arrow[r, "{(\tau_A,\ell_3)}", hook]                                                                    & \CC^s\times \CC^d                                                                                        
\end{tikzcd}
\end{equation}
where $Y(\Gamma_{\underline{n}},0)$ and $X(\Gamma_{\underline{n}^{\pm}},0)$ are hypertoric quiver varieties.
\end{thm}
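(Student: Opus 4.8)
The plan is to obtain the local analytic picture by combining the deformation theory of Higgs bundles on $C$ — equivalently, via the spectral correspondence, of rank-one torsion-free sheaves on the spectral curve $C_a$ — with Luna's \'etale slice theorem, and then to identify the resulting slice explicitly as a hypertoric quiver variety. First I would fix the point $(C_a,\cI)$. Since $a$ lies in the reduced locus, the spectral curve is reduced but possibly reducible, $C_a=\bigcup_i C_i$ with $C_i$ of degree $n_i$ over $C$ and $\sum_i n_i=n$; writing $\cI$ accordingly, the polystable representative in its $S$-equivalence class is a direct sum $\bigoplus_i \cI_i$ of rank-one torsion-free sheaves on the components, with automorphism group a torus $T\cong(\CC^\times)^{m}$, $m$ the number of components (this is exactly where the multiplicity-one hypothesis on $\underline m$ enters, forcing an \emph{abelian}, hence hypertoric, stabilizer rather than a general reductive one).

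Second, I would compute the $T$-equivariant tangent–obstruction theory at this point. The controlling complex is $R\mathcal{H}om$ of $\bigoplus_i\cI_i$ with itself on $C_a$ (suitably twisted for the Higgs/compactly supported picture on $T^*C$): its $H^0$ is $\mathrm{Lie}(T)$, and its $H^1$ decomposes under $T$ into weight spaces $\mathrm{Ext}^1(\cI_i,\cI_j)$ whose dimensions — computed by Riemann–Roch on $C_a$ together with the genus-$g$ hypothesis and the combinatorics of the nodes of $C_a$ — are precisely the arrow multiplicities of the quiver $\Gamma_{\underline n}$. Serre duality on the (locally) symplectic ambient surface identifies the obstruction space $\mathrm{Ext}^2$ with the dual of $\mathrm{Hom}=\mathrm{Lie}(T)$, so the Kuranishi model is quadratic and its zero locus is the moment-map fibre of the induced $T$-action on the symplectic vector space $\mathrm{Ext}^1$ — that is, a hypertoric quiver variety. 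The residual deformations that keep the $S$-equivalence type unchanged contribute the flat linear factors ($\CC^a$, $\CC^b$, and the base factors $\CC^{b_1(\Gamma_{\underline n})}$, $\CC^c$, $\CC^s$, $\CC^d$).

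Third, I would invoke Luna's slice theorem in its analytic form (as used in Mauri–Migliorini) to conclude that an analytic neighborhood of $(C_a,\cI)$ in $\overline{J}_{d,\cB}$ is isomorphic to the GIT quotient of the slice by $T$, which one recognizes as $X(\Gamma_{\underline{n}^\pm},0)\times\CC^b$, with the framing $\underline{n}^\pm$ read off from the way the components meet. Running the same argument inside $M(n,d)$ — using that $M(n,d)\hookrightarrow\overline{J}_{d,\cB}$ is the locally closed sublocus of spectral data arising from genuine rank-$n$ Higgs bundles — cuts out the subvariety $Y(\Gamma_{\underline n},0)\times\CC^a$. Finally, one checks that $\chi(n,d)$ becomes, in these coordinates, the quiver moment/GIT map $\chi_{\Gamma_{\underline n}}$ together with the linear projection onto the base factors, because the Hitchin map only records the support (characteristic polynomial) of the sheaf, which in the hypertoric model is exactly the residual torus moment-map data; this, together with the corresponding statement for $\pi$ and $\pi_{\Gamma_{\underline n}}$, makes the fibre-product square commute.

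The \textbf{main obstacle} is the explicit identification in the second and third steps: one must pin down the quiver $\Gamma_{\underline n}$ and its framing $\underline{n}^\pm$ from the Ext-group computation, keep careful track of the stability/GIT parameters so that the quotient of the slice is genuinely $X(\Gamma_{\underline{n}^\pm},0)$ and not a partial resolution or degeneration of it, and verify that the embeddings $(\tau_M,\ell_1)$, $(\tau_A,\ell_3)$ and both fibrations are \emph{simultaneously} compatible — i.e. that the entire cube, not merely the two squares, commutes. Tracking the dependence on $d$, which only shifts the framing/twist without altering the underlying quiver, is a routine but necessary bookkeeping check.
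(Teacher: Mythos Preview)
The paper does not prove this theorem at all: it is quoted verbatim as \cite[Theorem 7.4]{mauri2022hodge} and used as a black box in the application to singular Higgs moduli. There is therefore no ``paper's own proof'' to compare your proposal against.

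That said, your outline is a reasonable sketch of the Mauri--Migliorini argument itself: Luna's \'etale slice applied to the polystable point, the $\mathrm{Ext}^1$ computation on the reduced spectral curve giving the quiver data, and Serre duality/quadratic Kuranishi forcing the moment-map description. If you want feedback on whether your sketch matches \emph{their} proof, that is a comparison with a different paper; for the purposes of the present paper, no proof is required and none should be supplied --- a citation suffices.
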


\begin{lem}[\cite{mauri2022hodge}, Proposition 4.11]\label{sympresotorichiggs}
There exists a symplectic resolution of $Y(\Gamma_{\underline{n}},0)$ given by the affinization morphism,
\begin{equation}
    \pi_Y:Y(\Gamma_{\underline{n}},\theta)\rightarrow Y(\Gamma_{\underline{n}},0)
\end{equation}
where $Y(\Gamma_{\underline{n}},\theta)$ is a smooth holomorphic symplectic variety given by the quiver $\Gamma_{\underline{n}}$ and a generic stability vector $\theta$.
    
\end{lem}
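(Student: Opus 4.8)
The plan is to realize $Y(\Gamma_{\underline{n}},\theta)$ as an algebraic symplectic (GIT) reduction, to identify $\pi_Y$ with the variation-of-GIT morphism from the $\theta$-stable quotient to the affine quotient, and then to verify crepancy so that \cite[Proposition 1.1]{fu2003symplectic} applies.

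First I would recall the quiver-variety construction attached to $\Gamma_{\underline{n}}$ and the relevant dimension vector $\underline{v}$: one takes the cotangent space $T^*\operatorname{Rep}(\Gamma_{\underline{n}},\underline{v})$ of the representation space, with its canonical linear holomorphic symplectic form, together with the natural action of the group $G$ (a torus in the hypertoric case). The complex moment map $\mu$ for this action has zero fibre $\mu^{-1}(0)$, and one sets $Y(\Gamma_{\underline{n}},\theta)=\mu^{-1}(0)\sslash_\theta G$, the GIT quotient linearized by the character $\theta$, while $Y(\Gamma_{\underline{n}},0)=\mu^{-1}(0)\sslash G=\operatorname{Spec}\Gamma(\mu^{-1}(0),\cO)^{G}$ is the affinization. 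For $\theta$ generic, i.e.\ off the finitely many rational walls in the character space, $\theta$-semistability coincides with $\theta$-stability, all stabilizers are trivial, $\mu^{-1}(0)$ has the expected dimension, and the quotient is a smooth variety on which the linear symplectic form descends by symplectic reduction; this produces the smooth holomorphic symplectic variety $Y(\Gamma_{\underline{n}},\theta)$.

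Second, $\pi_Y$ is the canonical projective morphism $\operatorname{Proj}\bigl(\bigoplus_{m\ge 0}\Gamma(\mu^{-1}(0),\cO)^{G,\,m\theta}\bigr)\to\operatorname{Spec}\Gamma(\mu^{-1}(0),\cO)^{G}$; it is proper, and it restricts to an isomorphism over the dense open locus of points already stable for the trivial character, so $\pi_Y$ is a proper birational morphism, hence a resolution of singularities once $Y(\Gamma_{\underline{n}},\theta)$ is known to be smooth. To upgrade it to a symplectic resolution I would invoke \cite[Proposition 1.1]{fu2003symplectic}: it suffices to show $\pi_Y$ is crepant. This follows because the pullback $\pi_Y^*\sigma$ of the reflexive symplectic form $\sigma$ on $Y(\Gamma_{\underline{n}},0)$ agrees, on the big open locus where $\pi_Y$ is an isomorphism, with the everywhere non-degenerate form on $Y(\Gamma_{\underline{n}},\theta)$ coming from reduction, so both canonical bundles are trivial and $\pi_Y^*K_{Y(\Gamma_{\underline{n}},0)}=K_{Y(\Gamma_{\underline{n}},\theta)}$.

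The step I expect to be the main obstacle is verifying that the descended form on $Y(\Gamma_{\underline{n}},\theta)$ is non-degenerate \emph{everywhere}, equivalently that $\pi_Y$ is genuinely crepant and not merely a resolution: this requires that $\mu^{-1}(0)$ be a reduced complete intersection of the expected codimension and that $G$ act on the $\theta$-stable locus with closed orbits of the expected dimension, so that the symplectic reduction is ``clean.'' For the hypertoric quiver varieties $Y(\Gamma_{\underline{n}},\theta)$ this is classical (Bielawski--Dancer, Hausel--Sturmfels, Proudfoot; compare \cite{hausel2002toric}) and is carried out in the generality needed here in \cite{mauri2022hodge}. A minor additional point is to confirm that a generic $\theta$ exists and that $Y(\Gamma_{\underline{n}},0)$ is genuinely singular, so that ``resolution'' is not vacuous; both hold for the quivers arising from the local model of $\overline{J}_{d,\cB}$.
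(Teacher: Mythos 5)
The paper does not prove this lemma at all: it is imported verbatim as Proposition 4.11 of \cite{mauri2022hodge}, so there is no in-paper argument to compare yours against. Your sketch is the standard route to such statements (GIT/symplectic reduction of $T^*\operatorname{Rep}(\Gamma_{\underline{n}},\underline{v})$ by the torus, affinization as the variation-of-GIT morphism, crepancy via the descended symplectic form plus \cite[Proposition 1.1]{fu2003symplectic}), and it is essentially the argument carried out in Mauri--Migliorini and in the hypertoric literature they rely on, so as an outline it is sound.

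One step you wave at is actually the crux and deserves to be named precisely: for a \emph{torus} quotient, genericity of $\theta$ alone does not give trivial stabilizers on the $\theta$-semistable locus, hence does not give smoothness of $Y(\Gamma_{\underline{n}},\theta)$ --- generically one only gets an orbifold. Freeness of the action (and hence smoothness and everywhere-nondegeneracy of the reduced form) requires the hyperplane arrangement defining the hypertoric variety to be unimodular. For quiver-type hypertoric varieties this holds because the vertex--edge incidence matrix of a graph is totally unimodular, which is exactly why the lemma is true for the graphs $\Gamma_{\underline{n}}$ arising from the local model of $\overline{J}_{d,\cB}$ but would fail for arbitrary torus reductions. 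Your "main obstacle" paragraph correctly locates the difficulty in the cleanness of the reduction, but the resolution of that difficulty is this unimodularity statement, not a property of $\mu^{-1}(0)$ alone; you should either prove it for graphic arrangements or cite it explicitly (Bielawski--Dancer, Hausel--Sturmfels \cite{hausel2002toric}). With that supplied, the rest of your argument (properness and birationality of the affinization over the $0$-stable locus, triviality of both canonical bundles, crepancy) goes through.
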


Now combine our main theorem with the local structure of the Higgs moduli space, we deduce the following symmetry result on the singular Higgs moduli space. 

\begin{thm}
    Let $\chi(n,d):M(n,d)\rightarrow A_n^{red}$ be the Hitchin fibration over reduced Hitchin base. Then we have the following symplectic Hard Lefschetz theorem induced by the reflexive symplectic form on $M=M(n,d)$,
    \begin{equation}
        \sigma^k:\gr^F_k\dr(\ic_M(n))[-k-n]\rightarrow \gr^F_{-k}\dr(\ic_M(n))[k-n].
    \end{equation}
\end{thm}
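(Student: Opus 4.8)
The plan is to reduce the statement to Proposition~\ref{semiperversesymphardlef} via the étale-local symplectic resolutions furnished by Mauri--Migliorini. Write $M=M^{red}(n,d)=\chi(n,d)^{-1}(A_n^{red})$. By Proposition~\ref{semiperversesymphardlef}, the symplectic Hard Lefschetz isomorphism $\sigma^k:\gr^F_k\dr(\ic_M(n))[-k-n]\rightarrow\gr^F_{-k}\dr(\ic_M(n))[k-n]$ holds as soon as $\ic_M$ is strongly coherent m-perverse; and by Corollary~\ref{reconfromXCor5.8}, $\ic_X$ is strongly coherent m-perverse whenever $X$ admits symplectic resolutions étale-locally, since the support condition~(\ref{codimsupp}) on the graded de Rham complexes is analytic-local in nature. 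So the entire theorem comes down to checking that $M$ admits symplectic resolutions étale-locally.

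First I would record that $M$ is a holomorphic symplectic variety in the sense of Beauville: it is an open subvariety of the symplectic variety $M(n,d)$, so normality and the extension property of the symplectic form on the regular locus are inherited, and the Hitchin map restricts to a Lagrangian fibration $M\rightarrow A_n^{red}$ onto the smooth open locus $A_n^{red}\subset A_n$. Next I would invoke the local model of Theorem~\ref{localmodelhiggs}: any point of $M$ lies over a reduced characteristic polynomial, hence over some spectral pair $(C_a,\cI)$, and, via the local embedding of $M(n,d)$ into the compactified Jacobian $\overline{J}_{d,\cB}$, it has an analytic (étale) neighborhood $U$ isomorphic as a symplectic variety to a product $Y(\Gamma_{\underline{n}},0)\times\CC^a$, with $Y(\Gamma_{\underline{n}},0)$ a hypertoric quiver variety and $\CC^a$ a smooth symplectic factor coming from the $\ell_1$-direction.

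Then I would apply Lemma~\ref{sympresotorichiggs}: for a generic stability vector $\theta$, the affinization map $\pi_Y:Y(\Gamma_{\underline{n}},\theta)\rightarrow Y(\Gamma_{\underline{n}},0)$ is a symplectic resolution. Taking the product with the identity on $\CC^a$, the morphism $\pi_Y\times\mathrm{id}:Y(\Gamma_{\underline{n}},\theta)\times\CC^a\rightarrow Y(\Gamma_{\underline{n}},0)\times\CC^a\cong U$ is again a crepant, hence by \cite[Proposition 1.1]{fu2003symplectic} symplectic, resolution — the argument being the same product computation of canonical bundles already carried out in the proof of Proposition~\ref{propslicehassymplecticresolution}. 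Thus every point of $M$ has an étale neighborhood admitting a symplectic resolution, so Corollary~\ref{reconfromXCor5.8} applies, $\ic_M$ is strongly coherent m-perverse, and Proposition~\ref{semiperversesymphardlef} then yields the asserted isomorphism.

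The step I expect to be the main obstacle is not any of the homological inputs of Section~\ref{mainthm}, which apply formally once the support condition is in hand, but rather the bookkeeping around the local model: one must make sure that the symplectic form appearing in the Mauri--Migliorini chart agrees — up to the product decomposition $U\cong Y(\Gamma_{\underline{n}},0)\times\CC^a$ — with the restriction of the canonical holomorphic symplectic form on $M(n,d)$, so that the resolution built above really resolves the symplectic variety whose intersection complex Hodge module we care about, and that these charts genuinely cover all of $M^{red}(n,d)$. Both points are contained in \cite[Theorem 7.4]{mauri2022hodge}, so once that result is taken as a black box the remainder of the argument is purely formal.
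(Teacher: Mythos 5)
Your proposal is correct and follows essentially the same route as the paper: both verify that $\ic_M$ is strongly coherent m-perverse via the Mauri--Migliorini local model $U\cong Y(\Gamma_{\underline{n}},0)\times\CC^a$ together with the symplectic resolution of the hypertoric factor from Lemma \ref{sympresotorichiggs}, and then conclude by the symplectic Hard Lefschetz criterion (Proposition \ref{semiperversesymphardlef}, equivalently Theorem \ref{wholeisomsigma}). The only immaterial difference is that the paper applies Theorem \ref{5.4} to each factor separately and combines them with the product Lemma \ref{codimsuppinductionproduct}, whereas you build a symplectic resolution of the product directly via $\pi_Y\times\mathrm{id}$ and invoke Corollary \ref{reconfromXCor5.8}; both are valid.
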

\begin{proof}
    We first show that $M(n,d)$ satisfies the codimension support condition. 
    
    By Theorem \ref{localmodelhiggs} we see that $U\subset M(n,d)$ are locally isomorphic to $Y(\Gamma_{\underline{n}},0)\times \CC^a$. From Lemma \ref{sympresotorichiggs} we see that $Y(\Gamma_{\underline{n}},0)$ is a hypertoric quiver variety which admits symplectic resolution. Thus by Theorem \ref{5.4}, on both factors $Y(\Gamma_{\underline{n}},0)$ and $\CC^a$, the intersection complex Hodge modules are strongly coherent m-perverse, and so does $M(n,d)$ by Lemma \ref{codimsuppinductionproduct}. 
    
    Finally, the symplectic Hard Lefschetz theorem follows from Theorem \ref{wholeisomsigma}.
\end{proof}

\begin{thm}\label{perv=Hodgehiggs}
    If $M(n,d)$ admits a symplectic resolution, then the perverse-Hodge complexes $G_{i,k}$ for $\chi(n,d)$ satisfy symmetry $G_{i,k}\simeq G_{k,i}$ in the derived category.
\end{thm}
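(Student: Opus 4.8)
The plan is to obtain this as an immediate consequence of Theorem~\ref{sympP=H}, so that the real content lies in checking that the Hitchin fibration $\chi(n,d)\colon M(n,d)\to A_n^{red}$ falls within the framework of \S5. First I would record the structural input. Over the reduced Hitchin base, $M(n,d)$ is an irreducible holomorphic symplectic variety in the sense of Beauville: it is normal, its singularities are symplectic, and the holomorphic symplectic form on the stable (= regular) locus extends to a reflexive $2$-form $\sigma$. The Hitchin map $\chi(n,d)$ is proper, its base $A_n^{red}$ is smooth, being an open subset of the affine Hitchin base $A_n$, of dimension exactly $\tfrac12\dim M(n,d)$, and the general fibre is an abelian variety of that dimension; hence $\chi(n,d)$ is a Lagrangian fibration. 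By the hypothesis of the theorem there is a \emph{global} symplectic resolution $\pi\colon\tilde M\to M(n,d)$, and the composition $g:=\chi(n,d)\circ\pi\colon\tilde M\to A_n^{red}$ is then again a proper Lagrangian fibration, with $\tilde\sigma:=\pi^*\sigma$ the honest nondegenerate symplectic form on $\tilde M$ extending $\sigma|_{M_{reg}}$.

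Granting these facts, the conclusion is precisely Theorem~\ref{sympP=H}, but for completeness I would indicate how its proof specialises. The previous theorem, via the local model of Mauri--Migliorini (Theorem~\ref{localmodelhiggs}), the existence of a symplectic resolution of the hypertoric factor (Lemma~\ref{sympresotorichiggs}), and the product compatibility Lemma~\ref{codimsuppinductionproduct}, already shows that $\ic_{M(n,d)}$ is strongly coherent m-perverse, hence the absolute symplectic Hard Lefschetz theorem (Theorem~\ref{wholeisomsigma}). The global resolution $\pi$ then feeds into the relative symplectic Hard Lefschetz theorem~\ref{symprelHLsection5}: from the decomposition theorem $R\pi_*\QQ_{\tilde M}(n)[2n]\cong\ic_{M(n,d)}(n)\oplus N$, the fact that the action of $\pi^*\sigma$ respects this splitting (Lemma~\ref{ictoN}), and the compatibility of $\gr^F_{-k}\dr$ with proper push-forward along $\chi(n,d)$ (Lemma~\ref{fcommuDR}), one gets that $\mu_{i,-k}\colon G_{i,-k}\to G_{i+k,k}[2k]$ is an isomorphism for $k\ge 1$. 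Combining this with Saito's relative Hard Lefschetz $\ell^i\colon G_{-i,k}\xrightarrow{\ \simeq\ }G_{i,i+k}[2i]$ and composing as in the proof of Theorem~\ref{sympP=H},
\[
G_{i,k}\xrightarrow{\ \ell^{-i}\ }G_{-i,k-i}[-2i]\xrightarrow{\ \mu_{-i,k-i}\ }G_{-k,i-k}[-2k]\xrightarrow{\ \ell^{k}\ }G_{k,i},
\]
yields the desired quasi-isomorphism $G_{i,k}\cong G_{k,i}$ in $\Dbcoh(A_n^{red})$.

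The only genuinely non-formal point — as opposed to citing \S5 — is the verification that $g=\chi(n,d)\circ\pi$ is again a Lagrangian fibration and that $\pi$ transports the reflexive form $\sigma$ to an honest symplectic form $\tilde\sigma$ on $\tilde M$, since the relative symplectic Hard Lefschetz theorem of \cite{schnell2023hodge} invoked in Theorem~\ref{symprelHLsection5} is phrased for Lagrangian fibrations on smooth holomorphic symplectic manifolds. This is standard: $\pi$ is an isomorphism over the stable locus, which contains the preimage under $\chi(n,d)$ of a dense open of $A_n^{red}$ (where the spectral curve is smooth and every Higgs bundle is automatically stable), so a general fibre of $g$ coincides with a general fibre of $\chi(n,d)$ and is therefore Lagrangian of the correct dimension; properness is inherited from $\chi(n,d)$ and $\pi$; and $\pi^*\sigma$ extends to the nondegenerate $\tilde\sigma$ exactly because $\pi$ is a symplectic resolution. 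Once these compatibilities are in place, everything else is bookkeeping with the decomposition theorem and the functors $\gr^F_{-k}\dr$.
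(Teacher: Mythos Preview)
Your proposal is correct and takes essentially the same approach as the paper: both simply invoke Theorem~\ref{sympP=H}, since $M(n,d)$ is a holomorphic symplectic variety with a Lagrangian fibration and by hypothesis admits a global symplectic resolution. The paper's proof is the one-line ``This follows directly from Theorem~\ref{sympP=H}''; your additional verification of the hypotheses and recapitulation of the argument chain (local model, strong coherent m-perversity, relative symplectic Hard Lefschetz, composition with $\ell^i$) is accurate but more than is needed here, as those steps are already packaged into Theorem~\ref{sympP=H} and the preceding theorem.
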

\begin{proof}
     This follows directly from Theorem \ref{sympP=H}.  
\end{proof}

\section{Further questions}
We conclude by listing some problems yet to be discovered.
    \begin{ques}
        Establish the symmetry $G_{i,k}\simeq G_{k,i}$ or its variants for singular symplectic varieties in general.
    \end{ques}
    
    \begin{ques}(Multiplicativity) 
    Are perverse-Hodge complexes multiplicative (via cup-product) with respect to both of the perverse index and the Hodge index?
    \end{ques}
    Such expectation comes from the multiplicativity of the perverse filtration in the proof of the P=W conjecture. On the level of global cohomology this was shown to be true in \cite[Appendix A]{shen2022topology}. As a folklore conjecture, the perverse filtration (on the sheaf-theoretic level) for the Lagrangian fibration is expected to have a multiplicative splitting . 

    \begin{ques}(Deformation invariance)
        Let $f:M\rightarrow B$ and $f':M\rightarrow B$ be two Lagrangian fibrations and $G_{i,k}$, $G'_{i,k}$ be perverse-Hodge complexes respectively. 
        Assume that $M$ and $M'$ are deformation invariant, do we have $G_{i,k}\simeq G'_{i,k}$?
    \end{ques}
    The numerical question has a positive answer. When $M$ and $M'$ are deformation equivalent irreducible symplectic varieties, the numercial perverse Hodge numbers are deformation invariant provided that $b_2\ge5$ in the compact case, see \cite[Theorem 0.1]{felisetti2022intersection}. 


\bibliographystyle{alpha}
\bibliography{wp_ref}
\end{document}